\newtheorem{thm}[equation]{Theorem}
\newtheorem{prop}[equation]{Proposition}
\newtheorem{cor}[equation]{Corollary}
\newtheorem{lemma}[equation]{Lemma}
\newtheorem{claim}[equation]{Claim}
\newtheorem{ques}[equation]{Question}
\theoremstyle{definition}
\newtheorem{defn}[equation]{Definition}
\newtheorem{rem}[equation]{Remark}
\theoremstyle{remark}
\newtheorem{ntn}[equation]{Notation}
\renewcommand{\subsection}{\@startsection{subsection}{2}{0pt}{-3ex
plus -1ex minus -0.2ex}{-2mm plus -0pt minus
-2pt}{\normalfont\bfseries}}
\renewcommand{\subsubsection}{\@startsection{subsubsection}{2}{0pt}{-3ex
plus -1ex minus -0.2ex}{-2mm plus -0pt minus
-2pt}{\normalfont\bfseries}} \makeatother
\numberwithin{equation}{section}
\newcommand{\erem}{\hfill$\lozenge$\end{rem}\vskip 3pt }
\newcommand{\beq}{\begin{equation}\label}
\newcommand{\eeq}{\end{equation}}
\newcommand{\into}{\hookrightarrow}
\newcommand{\Spec}{\operatorname{Spec}}
 \newcommand{\en}{\enspace }
\def\R{\mathbb{R}}
\def\C{\mathbb{C}}
\def\Z{\mathbb{Z}}
\newcommand{\Sym}{\operatorname{Sym}}
\def\Z{{\mathbb Z}}
\def\1{\mathbf{1}}
\begin{document}
\title{\en Normality and quadraticity for special ample line bundles
  on toric varieties arising from root systems} \author{Q\"endrim
  R. Gashi and Travis Schedler}

\begin{abstract}
  We prove that special ample line bundles on toric varieties arising
  from root systems are projectively normal. Here the maximal cones of
  the fans correspond to the Weyl chambers, and special means that the
  bundle is torus-equivariant such that the character of the line
  bundle that corresponds to a maximal Weyl chamber is dominant with
  respect to that chamber.  Moreover, we prove that the associated
  semigroup rings are quadratic.
\end{abstract}

\maketitle
{
\setcounter{tocdepth}{1} \tableofcontents}

\section{Introduction and statement of main results}

\subsection{Notation} Let $\Phi$ be an irreducible, reduced root
system of rank $n$. We write $Y$, $X$, $X^{\vee}$, and $Y^{\vee}$ for
the root, weight, coroot, and coweight lattice, respectively. Note
that $Y \subseteq X$ and $X^{\vee} \subseteq Y^{\vee}$. We write
$\Delta$ for the set of roots of $\Phi$.

Denote by $W$ the Weyl group of $\Phi$. Let $F$ be the Weyl fan in
$Y^{\vee} \otimes_{\Z} \R$ and $F_n \subset F$ the set of chambers
(i.e., cones of maximal dimension) in $F$. The elements of $F_n$ are
the Weyl chambers cut out by the root hyperplanes of $\Phi$.

We study the complex toric variety $V$, whose fan is $F$ and 
initial lattice is $Y^{\vee}$. The toric variety $V$ has been studied
by many authors, e.g., \cite{procesi}, \cite{klyachko},
\cite{dabrowski-normality}, \cite{carrell-kurth},
\cite{carrell-kuttler}, \cite{qendrim2}, and \cite{qendrim}. It is a
smooth, projective toric variety for the torus $T_1= \Spec(\C[Y])
\simeq(\C^{\times})^n$. 
It is a well known fact (although we do not use it) that those toric
varieties are closures of generic torus orbits in the flag variety
$G/B$, where $G$ is the reductive group associated to $\Phi$ and $B
\subseteq G$ is a Borel subgroup.

Since $T_1$ acts on $V$, the torus $T= \Spec(\C[X])$ also acts on $V$
via the canonical projection $T \twoheadrightarrow T_1$. Let
$\mathcal{L}$ be a $T$-equivariant ample line bundle on $V$. Such line
bundles (or, more precisely, the isomorphism classes thereof) are in
one-to-one correspondence (see, e.g., \cite{fulton}) with convex
polytopes $P \subset X {\otimes}_{\Z} \R$ satisfying the following
property: The vertices of $P$ are given by a set $\{ \mu_{\sigma} :
\sigma \in F_n\} \subset X$, and for any two vertices $\mu_{\sigma}$
and $\mu_{\sigma '}$ of $P$, where $\sigma$ and $\sigma'$ are adjacent
chambers, $\mu_{\sigma} - \mu_{\sigma'} = r_{\sigma, \sigma'}
\, \alpha_{\sigma,\sigma'}$, for some number $r_{\sigma, \sigma'} \in
\Z_{> 0}$, where $\alpha_{\sigma,\sigma'} \in \Delta$ is the unique
root that is positive on $\sigma$ and negative on $\sigma'$. Such
polytopes are called ``ample.'' (In, e.g., \cite{arthur}, the 
sets $\{\mu_\sigma\}$ are called
``strictly positive orthogonal sets'' in this case.)

We denote by $\Lambda(P)$ the set of points $x \in P \cap X$ whose
image in $X/Y$ coincides with the images in $X/Y$ of the vertices of
$P$, i.e., $\Lambda(P) = P \cap \{y+\mu_\sigma \mid y \in Y\}$ for any
choice of Weyl chamber $\sigma$. Note that the character $x \in X$
occurs in $H^0(X, \mathcal{L})$ if and only if $x \in \Lambda(P)$,
where $P$ is the polytope corresponding to $\mathcal{L}$ (and then it
occurs with multiplicity one): see, e.g., \cite[\S 23.1,
p.~496]{akot}.

To every chamber $\sigma \in F_n$ there corresponds a basis
$\{\alpha_{i, \sigma}: i \in I\} \subseteq \Delta$ of $Y$ consisting
of elements of $\Delta$, where $I := \{ 1, \ldots, n\}$ (in other
words, a choice of simple roots). We say that an element
$x \in X \otimes_{\Z} \R$ is $\sigma$-dominant if $\langle x,
\alpha_{i, \sigma}^{\vee} \rangle \geq 0$, $\forall i \in I$. Here
$\langle \, , \rangle $ is the usual bilinear pairing $X \times
X^{\vee} \rightarrow \Z$, extended to $(X \otimes_{\Z} \R) \times
X^{\vee} \rightarrow \R$, and $\alpha_{i, \sigma}^{\vee}$ is the
coroot in $\Phi$ corresponding to $\alpha_{i, \sigma}$.

We impose a restriction on the type of polytopes $P$ that we consider:

\begin{itemize} \item[$(\dagger)$] For every $\sigma \in F_n$, the
  element $\mu_{\sigma}$ is $\sigma$-dominant.
\end{itemize}

Following Kottwitz (\emph{op.~cit.}, \S 12.9, p.~44), we call ample
polytopes satisfying the property $(\dagger)$
\emph{special}.\footnote{More generally, Kottwitz defines
  \emph{special orthogonal sets}, where an orthogonal set is a
  collection $\{\mu_\sigma\}$ where $\mu_\sigma - \mu_\sigma' =
  r_{\sigma, \sigma'} \, \alpha_{\sigma,\sigma'}$ for $r_{\sigma,
    \sigma'} \in \Z$, not necessarily positive. A special orthogonal
  set is then one satisfying $(\dagger)$. They necessarily satisfy
  $r_{\sigma, \sigma'} \geq 0$ for all adjacent $\sigma, \sigma'$
  (i.e., they are ``positive orthogonal sets''), but the $r_{\sigma,
    \sigma'}$ need not be positive (i.e., $\{\mu_\sigma\}$ need not be
  strictly positive, as in the ample case). The associated divisors
  are in particular a nonnegative linear combination of prime
  $T$-invariant divisors.}  In what follows we will primarily be
interested in special ample polytopes.  Note that the Weil divisors of
such ample polytopes in particular must have strictly positive
coefficients of all prime $T$-invariant divisors (but this condition
does not imply speciality).

\subsection{Statement of main results}

Our first main result is the following, which will be proved in \S
\ref{s:main-proof}:

\begin{thm}\label{main}
  Let $P$ be a special ample polytope as above and let $m \in
  \Z_{>0}$. Consider the dilated polytope $m P:=\{mx : x \in P
  \}$. Then any point $z \in \Lambda(mP)$ can be written as a sum
  $z=z_1+ \cdots + z_m$, with $z_i \in \Lambda(P)$, $\forall i
  =1,\ldots, m$.
\end{thm}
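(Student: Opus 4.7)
The plan is to induct on $m$. The base case $m = 1$ is trivial. For the inductive step ($m \geq 2$), the aim is to find a Weyl chamber $\sigma \in F_n$ such that $z - \mu_\sigma \in \Lambda((m-1)P)$; setting $z_1 = \mu_\sigma \in \Lambda(P)$ and applying the induction hypothesis to $z - z_1$ then yields the required decomposition.

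The lattice class of $z - \mu_\sigma$ is automatically correct, since all vertices $\mu_\tau$ of $P$ lie in a common $Y$-coset (by the ample adjacency condition) and $z \equiv m\mu_\tau \pmod{Y}$ for any chamber $\tau$. So the real content is the geometric containment $z - \mu_\sigma \in (m-1)P$, equivalently $z \in \mu_\sigma + (m-1)P$. The task thus reduces to showing that the translates $\{\mu_\sigma + (m-1)P\}_{\sigma \in F_n}$, each contained in $mP$, collectively cover $\Lambda(mP)$. This is a nontrivial combinatorial statement: for general (non-lattice) points of $mP$ the covering can fail (e.g., the centroid of $mP$ need not be covered by any such translate), but the claim is that it does hold on lattice points, and this is where specialness should enter.

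To choose $\sigma$ for a given $z$, I would take the chamber such that $\mu_\sigma$ is a vertex of $P$ ``closest'' to $z/m$ in an appropriate combinatorial sense (in checked examples, the Euclidean nearest vertex works, and more intrinsically one can require that $z$ be a maximizer of $\langle \cdot, \lambda\rangle$ on $mP$ for some $\lambda$ in $\sigma$). Since the edges of $P$ at $\mu_\sigma$ go in the directions $-\alpha_{i, \sigma}$ and $\{\alpha_{i,\sigma}\}_{i \in I}$ is a $\Z$-basis of $Y$, the polytope $P$ lies in the tangent cone $\mu_\sigma - \mathrm{cone}(\alpha_{i,\sigma} : i \in I)$, so $mP$ lies in $m\mu_\sigma - \mathrm{cone}(\alpha_{i,\sigma})$; combined with $z - m\mu_\sigma \in Y$, this yields a unique expansion $m\mu_\sigma - z = \sum_i c_i\, \alpha_{i,\sigma}$ with $c_i \in \Z_{\geq 0}$. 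The containment $z - \mu_\sigma \in (m-1)P$ then becomes the concrete condition $\mu_\sigma - \sum_i \frac{c_i}{m-1}\, \alpha_{i,\sigma} \in P$, to be checked facet by facet.

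The main technical obstacle is this facet verification. The facets of $P$ meeting $\mu_\sigma$---those corresponding to rays of the chamber $\sigma$---give trivial inequalities; it is the remaining facets, indexed by rays lying in other chambers $\tau$, that give nontrivial constraints on the $c_i$ via the Cartan pairings. The key role of the special condition is to control the vertex $\mu_\sigma$'s position relative to the other vertices $\mu_\tau$ through $\tau$-dominance, so that the Cartan-type inequalities forced by the ``closest vertex'' choice of $\sigma$ imply the required facet inequalities. Carrying out this verification---organized by chamber $\tau$ and exploiting $\tau$-dominance of $\mu_\tau$---is the technical core of the argument, and will presumably be where the bulk of the work in \S \ref{s:main-proof} goes.
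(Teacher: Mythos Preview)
Your plan inducts on $m$ by peeling off a vertex: find $\sigma$ with $z-\mu_\sigma\in\Lambda((m-1)P)$. This is \emph{not} the paper's route, and the decisive step---the covering claim that some such $\sigma$ always exists---is left as a ``facet verification'' that you expect \S\ref{s:main-proof} to carry out. It does not. Your choice of $\sigma$ (``Euclidean closest vertex,'' or ``$z$ maximizes $\langle\cdot,\lambda\rangle$ for some $\lambda\in\sigma$'') is never made precise, and no argument is offered for why specialness forces the remaining facet inequalities once $\sigma$ is chosen. Note, too, that vertex peeling is formally \emph{stronger} than the theorem: iterated, it says every $z\in\Lambda(mP)$ is a sum of $m$ \emph{vertices} of $P$, not merely $m$ lattice points. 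Even if that happens to hold for special ample $P$, you are proposing to prove a harder statement with no mechanism in sight.

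The paper instead fixes $m$, uses $W$-symmetry to reduce to $z\in \Lambda(mP)\cap D_\sigma$, and inducts \emph{downward} in the dominance order from the top element $m\mu_\sigma$. Stembridge's lemma says a cover $x\succ z$ in this poset has $x-z=\beta$ for some $\beta\in\Delta_+$; since $z$ is dominant, $\langle x,\beta^\vee\rangle\ge 2$, so in any decomposition $x=x_1+\cdots+x_m$ some $x_j$ satisfies $\langle x_j,\beta^\vee\rangle\ge 1$. The actual technical core is then the $m$-independent Proposition~\ref{mainclaim}: if $y\in\Lambda(P)$ and $\langle y,\beta^\vee\rangle\ge 1$, then $y-\beta\in\Lambda(P)$. \emph{This} is what is verified chamber by chamber, using both ampleness and specialness, and it immediately yields a decomposition of $z$ by replacing $x_j$ with $x_j-\beta$. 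The reduction to a clean local statement about a single root and $P$ alone is what makes the argument go through; your global covering claim relating $mP$ to $(m-1)P$ has no analogous reduction, and as written the proposal is a plan rather than a proof.
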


The toric interpretation of the theorem is as follows.  Call an
equivariant line bundle $\mathcal{L}$ on $V$ \emph{special ample} if
it corresponds to a special ample polytope $P$.

\begin{cor}\label{toricmain}
  Let $\mathcal{L}$ be a special ample
  line bundle on $V$.  Then, the canonical
  map $$H^0(V, \mathcal{L}) \otimes H^0(V, \mathcal{L}) \otimes \cdots
  \otimes H^0(V, \mathcal{L}) \longrightarrow H^0(V, \mathcal{L}^m)$$
  is a surjection for all $m \geq 1$, i.e., $\mathcal{L}$ is projectively
  normal.
\end{cor}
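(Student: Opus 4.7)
The plan is to translate the surjectivity statement into a purely combinatorial claim about lattice points in $P$ and $mP$, and then invoke Theorem~\ref{main}.

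First I would appeal to the standard $T$-weight decomposition of global sections of an equivariant line bundle on a toric variety, already recalled in the excerpt with reference to \cite[\S 23.1]{akot}: one has
\[
H^0(V, \mathcal{L}) \;=\; \bigoplus_{x \in \Lambda(P)} \C \cdot \chi^x,
\]
each weight occurring with multiplicity one. Since $\mathcal{L}^m$ is a $T$-equivariant ample line bundle whose corresponding polytope is the dilate $mP$, the same dictionary yields $H^0(V, \mathcal{L}^m) = \bigoplus_{z \in \Lambda(mP)} \C \cdot \chi^z$.

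Next I would identify the multiplication map on these weight decompositions. Multiplication of sections is local, and on any $T$-stable affine chart of $V$ it is just multiplication of characters of $T$; hence the canonical map sends $\chi^{x_1} \otimes \cdots \otimes \chi^{x_m}$ to $\chi^{x_1 + \cdots + x_m}$. Its image is therefore the span of those $\chi^z$, $z \in \Lambda(mP)$, for which $z = z_1 + \cdots + z_m$ with each $z_i \in \Lambda(P)$. (Note that such a sum automatically lies in $mP$ by convexity of $P$, and in the correct coset of $Y$ inside $X$, namely $m[\mu_\sigma] + Y$, so it does produce an element of $\Lambda(mP)$; the nontrivial direction is the converse.)

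Surjectivity of the multiplication map is thus equivalent to the assertion that every $z \in \Lambda(mP)$ admits such a decomposition, which is exactly the content of Theorem~\ref{main}. All the genuine difficulty is contained in that theorem; the corollary itself presents no further obstacle beyond this translation via the polytope--line-bundle dictionary.
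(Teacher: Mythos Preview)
Your argument is correct and is exactly the translation the paper has in mind: the corollary is stated as the ``toric interpretation'' of Theorem~\ref{main} without further proof, relying on precisely the weight decomposition $H^0(V,\mathcal{L}^m)=\bigoplus_{z\in\Lambda(mP)}\C\chi^z$ and the fact that multiplication of sections corresponds to addition of characters.
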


\begin{rem}
  The above corollary is a special case of Oda's Conjecture which
  claims that the statement of the corollary is true for any ample
  line bundle on a nonsingular, projective toric variety. In the case
  of root systems of type $A$, the conjecture, and therefore the
  corollary, is known to be true (see \cite{howard}).
\end{rem}

Next, consider the semigroup $S_{P} \subset X \times \Z$ generated by
$(x, 1)$ for $x \in \Lambda(P)$.  Then, the main theorem is equivalent
to the statement that $S_{P}$ is normal, i.e., it is saturated in $X
\times Z$. In other words, it equals its saturation, $\overline{S_{P}}
:= \bigcup_{m \geq 1} (\Lambda(mP)) \times \{m\}$, i.e., the
intersection of the cone $\R_{> 0} \cdot (P \times \{1\})$ with the
lattice $\{(y + t\mu_\sigma, t) \mid y \in Y, t \in \Z\}$, for any
fixed $\sigma \in F_n$.

If we instead begin with the semigroup $\overline{S_{P}}$, then
Theorem \ref{main} is equivalent to the statement that this semigroup
is generated in degree one with respect to the grading $|(x,m)|=m$,
for $x \in X$ and $m \in \Z$.

Our second main result is
\begin{thm} \label{main2} The semigroup $S_{P} = \overline{S_{P}}$ is
  presented by quadratic relations.  In other words, $S_{P} = \langle
  \Lambda(P) \times \{1\} \rangle / (R)$, where $R$ is spanned by the
  elements
\begin{equation*}
(x,1) (y,1) - (x', 1) (y', 1),
\end{equation*}
for $x, y, x', y' \in \Lambda(P)$ such that $x+y = x'+y'$.
\end{thm}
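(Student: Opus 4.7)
My plan is to prove Theorem~\ref{main2} by induction on $m$, the common degree of the two sides of a relation. Since Theorem~\ref{main} already supplies $S_P = \overline{S_P}$, the natural map from the free commutative semigroup on $\Lambda(P)\times\{1\}$ onto $S_P$ is surjective, and the content of Theorem~\ref{main2} is that its kernel is generated (as a congruence) by quadratic binomials. Equivalently: whenever $z_1+\cdots+z_m = w_1+\cdots+w_m$ in $X$ with every $z_i, w_j \in \Lambda(P)$, the multiset $\{z_1,\ldots,z_m\}$ can be converted into $\{w_1,\ldots,w_m\}$ by a sequence of \emph{quadratic exchanges}, each replacing a pair $(a,b)$ from the current multiset by a pair $(a',b')$ with $a+b = a'+b'$ and $a', b' \in \Lambda(P)$.

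The case $m=2$ is immediate: such a relation is precisely an element of the ideal $R$ defined in the statement. For the inductive step, I fix the element $w_1$ and attempt to use quadratic exchanges to move $(z_1,\ldots,z_m)$ to a multiset containing $w_1$. Once this is achieved, we cancel $w_1$ from both sides and apply the inductive hypothesis to the remaining degree-$(m-1)$ relation. To produce $w_1$ in the multiset, the cleanest case is when there exist indices $i,j$ with $y := z_i + z_j - w_1 \in \Lambda(P)$; then the single swap $(z_i,z_j) \to (w_1, y)$ does the job. Note that the lattice-coset condition on $y$ is automatic, since each of $z_i, z_j, w_1$ has the same image as any vertex $\mu_\sigma$ in $X/Y$, so $y \equiv \mu_\sigma \pmod{Y}$; hence the only substantive condition is $y \in P$.

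The principal obstacle, and the heart of the proof, is therefore the following geometric claim: given a decomposition $z_1+\cdots+z_m \in \Lambda(mP)$ and a target $w_1 \in \Lambda(P)$ whose complementary sum $z_1+\cdots+z_m - w_1$ lies in $\Lambda((m-1)P)$, one can (after perhaps a short sequence of preparatory quadratic exchanges) find indices $i,j$ with $z_i+z_j-w_1 \in P$. I expect the proof of this claim to mirror, and partly reuse, the argument for Theorem~\ref{main}, drawing on the defining structure of special ample polytopes: the $\sigma$-dominance of each vertex $\mu_\sigma$ and the fact that differences of adjacent vertices are positive integer multiples of the separating root. These constraints should provide the required ``elbow room'' inside $P$: given any two candidate decompositions, the root-system combinatorics let us redistribute pair sums of the $z_i$'s until one accommodates $w_1$ as a summand, after which the induction on $m$ finishes the argument.
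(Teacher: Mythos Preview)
Your outline has a genuine gap at the heart of the inductive step. You correctly identify that the crux lies in showing that, given two decompositions of the same element, one can arrange (via quadratic exchanges) for some $z_i + z_j - w_1$ to land back in $P$; but you do not prove this, and the version \emph{without} preparatory exchanges is already false in type $A_1$. For example, with $\Lambda(P) = \{-2\alpha, -\alpha, 0, \alpha, 2\alpha\}$, $m=3$, $z_1=z_2=z_3=\alpha$, and $w_1=-\alpha$ (completed by $w_2=w_3=2\alpha$), every $z_i+z_j-w_1 = 3\alpha \notin P$. So preparatory exchanges are genuinely required; but once you allow them, your ``geometric claim'' collapses to the assertion that quadratic exchanges can produce $w_1$ in the multiset, which is precisely the inductive step you are trying to establish. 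Nothing has been reduced, and the appeal to ``root-system combinatorics'' and ``elbow room'' is not a proof.

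The paper's argument is organized differently and makes the difficulty explicit. The base case $m=2$ is \emph{not} treated as trivial; instead one proves the stronger assertion (Theorem~\ref{main2s}) that any two decompositions are connected by \emph{root moves} $(x_i,x_j)\mapsto(x_i+\beta,x_j-\beta)$ with $\beta\in\Delta$, and one allows the $P_i$ to be distinct special ample polytopes. For $m=2$ this is carried out by induction on the \emph{sum} $x=x_1+x_2$ along a partial order determined by ``$P$-progressive'' simple roots, using a sharpening of Stembridge's covering lemma (Lemma~\ref{m2lem1}) together with Lemmas~\ref{m2lem2} and~\ref{m2lem3} to transport root moves from pairs summing to $x+\alpha$ down to pairs summing to $x$. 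The passage to general $m$ is then the easy part: one groups $P_1+\cdots+P_{m-1}$ against $P_m$ and invokes the two-polytope result. In short, the work you deferred to the inductive step is exactly the work the paper does in its (generalized) $m=2$ case, and it requires the root-move machinery rather than a direct convexity argument.
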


\begin{rem} \label{r:quad-stronger} Put differently, the 
  semigroup ring $\C[S_{P}] = \C[\overline{S_{P}}]$ is quadratic. We
  will actually prove a stronger version of the above theorem, which
  roughly says that $(R)$ is spanned by moves which replace $(x_1,
  \ldots, x_m) \in \Lambda(P)^m$ by $(x_1, \ldots, x_i + \alpha,
  x_{i+1} - \alpha, \ldots, x_m)$ for $\alpha \in \Delta$. See \S
  \ref{main2ssec} below for a precise statement.
\end{rem}

Since $\C[\overline{S_{P}}] \cong \bigoplus_{m \geq 0} H^0(V,
\mathcal{L}^{m})$, the toric interpretation of Theorems \ref{main} and
\ref{main2} is
\begin{cor} \label{toricmain2} Let $\mathcal{L}$ be a special
ample line bundle on $V$.
 Then, the ring $\bigoplus_{m \geq 0} H^0(V,
  \mathcal{L}^{m})$ is quadratic.
\end{cor}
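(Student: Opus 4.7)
The plan is to deduce Corollary \ref{toricmain2} as a direct translation of Theorems \ref{main} and \ref{main2} through the graded isomorphism $\C[\overline{S_P}] \cong \bigoplus_{m \geq 0} H^0(V, \mathcal{L}^m)$ that is already recorded just above the statement. The first step is to confirm that this isomorphism is compatible with gradings, with the degree-$m$ piece on the left spanned by $\{(x,m) : x \in \Lambda(mP)\}$ and the degree-$m$ piece on the right being $H^0(V, \mathcal{L}^m)$. This identification is an application of the standard toric description of global sections of $\mathcal{L}^m$, whose characters are indexed (each with multiplicity one) by $\Lambda(mP)$, as recalled in the introduction and in \cite[\S 23.1]{akot}. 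The multiplication on each side is given by addition of characters in $X$, so the two graded algebra structures match.

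Once the graded identification is in place, Theorem \ref{main} applied for every $m \geq 1$ yields precisely the equality $S_P = \overline{S_P}$; transporting this across the isomorphism tells us that $\bigoplus_{m \geq 0} H^0(V, \mathcal{L}^m)$ is generated as a $\C$-algebra by its degree-one component $H^0(V, \mathcal{L})$. Theorem \ref{main2} then records that the only relations needed to present $S_P$ (and hence its semigroup algebra $\C[S_P]$) from these generators are the quadratic relations $(x,1)(y,1) = (x',1)(y',1)$ whenever $x+y = x'+y'$ in $X$. Combining these two steps gives a presentation of $\bigoplus_{m \geq 0} H^0(V, \mathcal{L}^m)$ as a commutative algebra generated in degree one and cut out by quadratic relations, which is the definition of being quadratic.

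Since the corollary is essentially a dictionary translation of Theorems \ref{main} and \ref{main2}, no genuine obstacle is expected: all the combinatorial work has been carried out at the level of the semigroup, and the only non-formal input is the grading-preserving isomorphism $\C[\overline{S_P}] \cong \bigoplus_{m \geq 0} H^0(V, \mathcal{L}^m)$, which is a standard fact about $T$-equivariant ample line bundles on smooth projective toric varieties.
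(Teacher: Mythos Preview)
Your proposal is correct and matches the paper's approach exactly: the paper states the corollary immediately after noting that $\C[\overline{S_{P}}] \cong \bigoplus_{m \geq 0} H^0(V, \mathcal{L}^{m})$ and presents it as the toric interpretation of Theorems \ref{main} and \ref{main2}, with no further argument given. You have simply spelled out the dictionary translation that the paper leaves implicit.
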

\begin{rem} The above corollary is a special case of Sturmfels's
  conjecture \cite[Conjecture 13.19]{Stugbcp}, which states that, for
  any projective nonsingular toric variety $X$ and ample projectively
  normal line bundle $\mathcal{L}$, the associated ring $\bigoplus_{m
    \geq 0} H^0(X, \mathcal{L}^{m})$ is quadratic. (If Oda's conjecture is
  true, then the projectively normal assumption is automatic.)
\end{rem}

This leaves open the natural
\begin{ques} Is the ring $\C[S_{P}]$ Koszul? 
\end{ques}
See \cite{payne} and \S \ref{ss:diag-split-intro} and
\S \ref{s:not-diag-split} below for such a result in a related situation.

\begin{rem}
  It is clear that all of the above results remain true if we replace
  $P$ with the polytope $\nu + P$, where $\nu \in X$ (we still require
  that $P$ satisfy $(\dagger)$). Concerning the geometric statements
  (Corollaries \ref{toricmain} and \ref{toricmain2}), the line bundle
  on $V$ corresponding to the polytope $\nu + P$ is isomorphic to the
  line bundle $\mathcal{L}$ (just equipped with a different
  equivariant structure, which does not affect these statements).  In
  other words, the above results can be viewed as applying to
  nonequivariant ample line bundles which admit a special equivariant
  structure.
\end{rem}

\subsection{Strengthening Theorems \ref{main} and
  \ref{main2}} \label{main2ssec}

Rather than prove Theorem \ref{main2}, we will prove the following,
which generalizes it and Theorem \ref{main}. For yet another
strengthening, see the appendix.

\begin{defn}\label{equiv}
  Suppose $P_1, \ldots, P_m$ are special ample polytopes and $(x_1,
  \ldots x_m) \in \Lambda(P_1) \times \cdots \times \Lambda(P_m)$.
  Suppose further that $\beta \in \Delta$ is a root and $i$ and $j$
  are indices such that $x_i + \beta \in \Lambda(P_i)$ and $x_j -
  \beta \in \Lambda(P_j)$. Then, we say that
  \begin{equation}\label{eq:equivmove}
    (x_1, \ldots, x_m) \sim (x_1, \ldots, x_{i-1}, x_i + \beta, x_{i+1}, 
    \ldots, x_{j-1}, x_j - \beta, x_{j+1}, \ldots, x_m).
\end{equation}
Call this a \emph{root move}.
Extend $\sim$ to the equivalence relation generated by this, i.e.,
$(x_1, \ldots, x_m) \sim (x_1', \ldots, x_m')$ if the two are related
by a sequence of root moves.
\end{defn}
Note that, since root moves are reversible, a tuple is related to
another tuple by root moves if and only if one can be obtained from
the other by a sequence of root moves.

The following result strengthens Theorems \ref{main} and \ref{main2}:
\begin{thm}\label{main2s}
  If $P_1, \ldots, P_m$ are special ample polytopes and $x \in
  \Lambda(P_1 + \cdots + P_m)$, then
\begin{enumerate}
\item[(i)] There exists a tuple $(x_1, \ldots, x_m) \in \Lambda(P_1)
  \times \cdots \times \Lambda(P_m)$ such that $x_1 + \cdots + x_m =
  x$;
\item[(ii)] If $(x_1, \ldots, x_m)$ and $(x_1', \ldots, x_m')$ are two
  such tuples, then $(x_1, \ldots, x_m) \sim (x_1', \ldots, x_m')$.
\end{enumerate}
\end{thm}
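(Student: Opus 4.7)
My plan is to proceed by induction on $m$, with the case $m = 1$ being trivial in both parts. The key geometric input is the \emph{tangent cone description} of any special ample polytope $P_i$: the edges at the vertex $\mu_\sigma^{(i)}$ go to the adjacent vertices $\mu_{\sigma'}^{(i)}$ in directions $-r_{\sigma,\sigma'}^{(i)}\,\alpha_{\sigma,\sigma'}$, so the tangent cone of $P_i$ at $\mu_\sigma^{(i)}$ is $\mu_\sigma^{(i)} - \sum_{j \in I} \R_{\geq 0}\,\alpha_{j,\sigma}$, and $P_i$ sits inside each such tangent cone (being the intersection of its vertex tangent cones). Applied to $P_1 + \cdots + P_m$, this yields that for any $x \in \Lambda(P_1 + \cdots + P_m)$ and any chamber $\sigma$, there exist nonnegative integers $n_j(\sigma, x)$ with
\[
x \;=\; \sum_{i=1}^m \mu_\sigma^{(i)} \;-\; \sum_{j \in I} n_j(\sigma, x)\,\alpha_{j,\sigma}.
\]

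For part (i), I would induct on $m$: given $x$, split off a single piece $x_m \in \Lambda(P_m)$ of the form $\mu_\sigma^{(m)} - \sum_j c_j\,\alpha_{j,\sigma}$ with $0 \leq c_j \leq n_j(\sigma, x)$, choose $\sigma$ and the $c_j$ so that both $x_m \in P_m$ and $x - x_m \in P_1 + \cdots + P_{m-1}$ (the cosets in $X/Y$ work out automatically), and then invoke the inductive hypothesis on the residual. The freedom in $\sigma$ and the $c_j$ must be used carefully; this is where the dominance condition $(\dagger)$ becomes essential, constraining the shape of $P_m$ near $\mu_\sigma^{(m)}$ so that such splittings can always be carried out.

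For part (ii), given two decompositions $(x_1, \ldots, x_m)$ and $(x_1', \ldots, x_m')$ of $x$, I would induct on a discrepancy measure, e.g.\ $\sum_i (x_i - x_i', x_i - x_i')$ under a $W$-invariant inner product $(\cdot,\cdot)$. The plan is to show that whenever $(x_i) \neq (x_i')$ there exist indices $i \neq j$ and a root $\beta \in \Delta$ such that the root move sending $(x_i, x_j)$ to $(x_i + \beta, x_j - \beta)$ is legal and strictly decreases the discrepancy; natural candidates for $\beta$ come from the primitive root-lattice decomposition of some nonzero $x_i - x_i'$, and the counter-index $j$ can be selected using $\sum_i (x_i - x_i') = 0$. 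The main obstacle in both parts is the \emph{legality} of the moves---verifying that the proposed lattice points actually lie in the correct $\Lambda(P_i)$. This reduces to combining convexity of each $P_i$ with $(\dagger)$; the cleanest route is likely to first establish the binary case $m = 2$ of both parts as a self-contained lemma (using that the Minkowski sum of two special ample polytopes is again special ample) and then bootstrap to general $m$ by induction.
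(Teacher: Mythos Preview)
Your architecture—prove $m=2$ as a self-contained lemma, then bootstrap—is the paper's as well, but the substance of both steps is missing from your plan, and the specific strategy you propose for part~(ii) has a genuine gap.

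For part~(ii), the discrepancy-reduction idea is natural but does not obviously go through. You need, whenever $(x_1,x_2)\neq(x_1',x_2')$, a root $\beta$ with $x_1+\beta\in\Lambda(P_1)$, $x_2-\beta\in\Lambda(P_2)$, and strictly smaller discrepancy. Even granting that $x_1'-x_1$ lies in the tangent cone of $P_1$ at $x_1$, a single simple-root step toward $x_1'$ can exit $P_1$ through a different facet, and the $\beta$ that helps on the $P_1$ side may push $x_2-\beta$ out of $P_2$. ``Combining convexity with $(\dagger)$'' is exactly the hard part, not a resolution of it. The paper sidesteps this by inducting not on a discrepancy between two tuples but on the \emph{sum} $x$ itself, in the dominance order: at $x=\mu_1+\cdots+\mu_m$ the decomposition is unique, and for other winning $x$ one finds a simple root $\alpha$ (called ``$P$-progressive'') with $x+\alpha\in\Lambda(P)$ closer to $\mu$, assumes the result there, and then shows (Lemmas~\ref{m2lem2} and~\ref{m2lem3}) that the equivalence of decompositions descends from $x+\alpha$ to $x$. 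The engine throughout is the pointwise criterion of Proposition~\ref{mainclaim}: $\langle y,\beta^\vee\rangle\geq 1$ implies $y-\beta\in\Lambda(P)$; this is how $(\dagger)$ actually enters.

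Part~(i) runs on the same engine. Your plan leaves unexplained how to choose $\sigma$ and the $c_j$ so that \emph{both} containments $x_m\in P_m$ and $x-x_m\in P_1+\cdots+P_{m-1}$ hold simultaneously—being in one tangent cone is far from being in the polytope—and this is again the whole difficulty. The paper instead walks from $\mu$ down to $x$ along covers in the dominance order (each cover subtracts a single positive root $\beta$, by Stembridge's lemma), and at each step some component has $\langle x_i,\beta^\vee\rangle\geq 1$, hence absorbs $\beta$ by Proposition~\ref{mainclaim}. Finally, the bootstrap from $m=2$ to general $m$ also needs a lemma you do not mention: a root move $y\mapsto y+\beta$ inside $\Lambda(P_1+\cdots+P_{m-1})$ must be lifted to a root move on a single component $y_j$ (Lemma~\ref{l:main2s-indlem}), and this again comes down to Proposition~\ref{mainclaim}.
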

Specializing to the case $m = 2$ and $P = P_1 = P_2$, part (ii)
implies that the permutation $(x_1, x_2) \mapsto (x_2, x_1)$ is a
series of root moves inside $\Lambda(P)^2$.  Therefore, in the case $P
= P_1 = \cdots = P_m$ for arbitrary $m$, the relation $\sim$ is
actually generated by root moves \eqref{eq:equivmove} with $j =
i+1$. This explains Remark \ref{r:quad-stronger}, and hence Theorem
\ref{main2s}.(ii) implies Theorem \ref{main2}.

Our motivation for allowing $P_1, \ldots, P_m$ to be distinct
polytopes is that it allows one to inductively prove the theorem on
$m$: one deduces the result for $m > 2$ from the pair $(P_1 + \cdots +
P_{m-1}, P_m)$.

A toric interpretation of part of the theorem is as follows.
Let $\mathcal{L}_1, \ldots, \mathcal{L}_m$ be special ample line bundles
 on $V$ and
 $$\varphi_{\mathcal{L}_1, \ldots, \mathcal{L}_m}: H^0(V,
  \mathcal{L}_1) \otimes \cdots \otimes H^0(V, \mathcal{L}_m)
  \longrightarrow H^0(V, \mathcal{L}_1 \otimes \cdots \otimes
  \mathcal{L}_m)$$
be the canonical map.
\begin{cor}\label{toricmain2s}
\begin{enumerate}
\item[(i)] $\varphi_{\mathcal{L}_1, \ldots,
    \mathcal{L}_m}$ is surjective.

\item[(ii)] The kernel of $\varphi_{\mathcal{L}_1, \ldots,
    \mathcal{L}_m}$ is spanned by the canonical subspaces
  $$\ker(\varphi_{\mathcal{L}_i, \mathcal{L}_j}) \otimes \bigotimes_{k
    \notin \{i,j\}} H^0(V, \mathcal{L}_k) \subseteq
  \ker(\varphi_{\mathcal{L}_1, \ldots, \mathcal{L}_m}).$$
\end{enumerate}
\end{cor}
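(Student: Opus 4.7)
The plan is to reduce Corollary \ref{toricmain2s} to Theorem \ref{main2s} via the standard identification of global sections of a torus-equivariant ample line bundle with the formal span of characters indexed by lattice points in the associated polytope. Specifically, if $P_i$ is the special ample polytope corresponding to $\mathcal{L}_i$, then $H^0(V, \mathcal{L}_i)$ carries a $T$-eigenbasis $\{e_x\}_{x \in \Lambda(P_i)}$, and $\mathcal{L}_1 \otimes \cdots \otimes \mathcal{L}_m$ corresponds to the Minkowski sum $P_1 + \cdots + P_m$ (this sum is again a special ample polytope, with vertex $\mu_{\sigma,1} + \cdots + \mu_{\sigma,m}$ at the chamber $\sigma$). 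Under these identifications, $\varphi_{\mathcal{L}_1, \ldots, \mathcal{L}_m}$ sends $e_{x_1} \otimes \cdots \otimes e_{x_m}$ to $e_{x_1 + \cdots + x_m}$; note that the sum lies in $\Lambda(P_1 + \cdots + P_m)$, because each $x_i$ is congruent modulo $Y$ to $\mu_{\sigma,i}$, so their sum is congruent modulo $Y$ to the corresponding vertex of the Minkowski sum.

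With this dictionary in hand, part (i) is a direct translation of Theorem \ref{main2s}(i): every basis vector $e_x$ with $x \in \Lambda(P_1 + \cdots + P_m)$ lifts to $e_{x_1} \otimes \cdots \otimes e_{x_m}$ for some decomposition $x = x_1 + \cdots + x_m$ with $x_i \in \Lambda(P_i)$.

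For part (ii), the kernel of $\varphi_{\mathcal{L}_1, \ldots, \mathcal{L}_m}$ is spanned by differences of pure tensors $e_{x_1} \otimes \cdots \otimes e_{x_m} - e_{x'_1} \otimes \cdots \otimes e_{x'_m}$ with $\sum x_i = \sum x'_i$. Theorem \ref{main2s}(ii) connects $(x_1, \ldots, x_m)$ to $(x'_1, \ldots, x'_m)$ by a finite sequence of root moves, each altering only two entries. Writing the original difference as a telescoping sum along this sequence, a single root move exchanging a root $\beta \in \Delta$ between positions $i$ and $j$ contributes a term of the shape
\begin{equation*}
\bigl(e_{x_i} \otimes e_{x_j} - e_{x_i + \beta} \otimes e_{x_j - \beta}\bigr) \otimes \bigotimes_{k \notin \{i,j\}} e_{x_k},
\end{equation*}
whose first factor lies in $\ker(\varphi_{\mathcal{L}_i, \mathcal{L}_j})$ (both summands map to $e_{x_i + x_j}$). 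Thus each telescoping term lies in the asserted subspace, proving (ii).

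I do not foresee a serious obstacle; the proof is essentially a translation. The one piece of care needed is in part (ii), where one must use the definition of root move to ensure that all intermediate tuples produced by the telescoping argument really are tuples in $\Lambda(P_1) \times \cdots \times \Lambda(P_m)$, so that every term stays a genuine basis tensor and the individual differences factor through the pairwise kernels. This is built into the definition of a root move and so is automatic.
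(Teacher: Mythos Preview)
Your proposal is correct and follows exactly the approach the paper intends: the paper states Corollary \ref{toricmain2s} as ``a toric interpretation'' of Theorem \ref{main2s} without giving a separate proof, treating the translation via the standard dictionary (characters in $\Lambda(P_i)$ as a $T$-eigenbasis of $H^0(V,\mathcal{L}_i)$, Minkowski sum for tensor product) as immediate. Your telescoping argument along a chain of root moves is precisely the intended unpacking of part (ii), and your remark that intermediate tuples remain in $\Lambda(P_1)\times\cdots\times\Lambda(P_m)$ by the very definition of a root move is the only point requiring care.
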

Similarly, we can apply this to the Cayley sum polytope of polytopes $P_1, \ldots, P_k$.  Recall that this is defined as the polytope inside $(X \otimes_\Z \R) \times \R^{k}$ which is the convex hull of $(P_1 \times \{e_1\}) \cup \cdots \cup
(P_k \times \{e_k\})$, where $e_1, \ldots, e_k$ are the standard basis of $\R^k$. The resulting polytope is denoted by $P_1 * P_2 * \cdots * P_k$ and is considered with respect to the lattice $Y \times \Z^k$.
\begin{cor}\footnote{Thanks to S. Payne for observing this corollary.}
  Let $P_1, \ldots, P_k$ be special ample polytopes.  Then, the Cayley
  sum polytope $P = P_1 * \cdots * P_k$ is normal, and $\C[S_P]$ is
  quadratic.
\end{cor}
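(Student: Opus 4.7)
The approach is to reduce all structural claims about the Cayley sum $P = P_1 * \cdots * P_k$ directly to Theorem \ref{main2s} applied to weighted multisets of the polytopes $P_1, \ldots, P_k$. The first step is to describe the lattice points of $mP$ slice by slice: a lattice point whose $\Z^k$-component equals $(m_1, \ldots, m_k)$ with $\sum_i m_i = m$ has the form $(x, (m_1, \ldots, m_k))$ with $x \in \Lambda(m_1 P_1 + \cdots + m_k P_k)$ (after translating each $P_i$ by an element of $X$ so that its vertices lie in $Y$, as permitted by the remark preceding \S \ref{main2ssec}). In particular, the level-one lattice points of $P$, which generate $S_P$, are exactly $\bigsqcup_{i=1}^{k} \Lambda(P_i) \times \{e_i\}$.

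Normality then follows immediately from Theorem \ref{main2s}(i), applied to the list of polytopes $(\underbrace{P_1, \ldots, P_1}_{m_1}, \ldots, \underbrace{P_k, \ldots, P_k}_{m_k})$: for every $x \in \Lambda(m_1 P_1 + \cdots + m_k P_k)$, one obtains $y_{i,j} \in \Lambda(P_i)$, $1 \leq j \leq m_i$, with $\sum_{i,j} y_{i,j} = x$. The product $\prod_{i,j} (y_{i,j}, e_i)$ then equals $(x, (m_1, \ldots, m_k))$ in $S_P$, exhibiting the lattice point as a product of $m$ level-one generators.

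For quadraticity, let two factorizations $\prod_{i,j}(y_{i,j}, e_i)$ and $\prod_{i,j}(y'_{i,j}, e_i)$ have the same product in $S_P$. Equality of the $\Z^k$-components forces both multisets to use exactly $m_i$ generators from $\Lambda(P_i) \times \{e_i\}$, so they may be indexed by a common pair $(i,j)$; equality of the $X$-components then gives $\sum y_{i,j} = \sum y'_{i,j}$. Theorem \ref{main2s}(ii), applied to the same list of polytopes, yields a sequence of root moves relating $(y_{i,j})$ to $(y'_{i,j})$. Each root move, which replaces a pair $(y_{i,j}, y_{i',j'})$ by $(y_{i,j} + \beta, y_{i',j'} - \beta)$ for some $\beta \in \Delta$, lifts to the quadratic semigroup identity
\[
(y_{i,j}, e_i)\,(y_{i',j'}, e_{i'}) = (y_{i,j} + \beta, e_i)\,(y_{i',j'} - \beta, e_{i'})
\]
in $S_P$, since both sides agree in $X$- and $\Z^k$-coordinate. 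Thus every relation among level-one generators is a composition of quadratic ones, and $\C[S_P]$ is quadratic.

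The main (and only nontrivial) obstacle is bookkeeping: verifying that the slices of $mP$ genuinely match the Minkowski sums $m_1 P_1 + \cdots + m_k P_k$ (including compatibility of $\Lambda$-sets with the lattice $Y \times \Z^k$ on $P$ after the vertex-shifting remark), and that the root moves from Theorem \ref{main2s}(ii) preserve the polytope assignment, so that the $e_i$-coordinate in $S_P$ is untouched. Granting these immediate checks, the corollary is a clean translation of Theorem \ref{main2s}, with no new combinatorial content contributed by the Cayley sum construction itself.
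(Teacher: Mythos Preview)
Your proposal is correct and follows essentially the same approach as the paper: the paper's proof is a one-line remark that for each $m_1, \ldots, m_k \geq 0$ one applies Theorem \ref{main2s} to $\Lambda(P_1)^{m_1} \times \cdots \times \Lambda(P_k)^{m_k}$ with $m = m_1 + \cdots + m_k$, noting that the degree-one generators of $\C[S_P]$ are the elements $((y,e_i),1)$ for $y \in \Lambda(P_i)$. Your write-up expands on exactly this, filling in the bookkeeping about slicing $mP$ and lifting root moves to quadratic relations, which the paper leaves implicit.
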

The corollary follows from Theorem \ref{main2s} as follows: for every
$m_1, \ldots, m_k \geq 0$, apply the theorem to the product $\Lambda(P_1)^{m_1}
\times \cdots \times \Lambda(P_k)^{m_k}$, with $m = m_1 + \cdots + m_k$. Note
here that the (degree-one) generators of $\C[S_P]$ are the elements
$((y,e_i),1) \in (\Lambda(P_i) \times \Z^k) \times \Z$, where $1 \leq
i \leq k$.

Finally, we give the toric interpretation of the corollary.  Let
$\mathcal{L}_1, \ldots, \mathcal{L}_k$ be special ample line bundles
on $V$.  Given a vector bundle $\mathcal{U}$, let
$\Sym^m(\mathcal{U})$ denote its $m$-th symmetric power.
\begin{cor}
  The ring $\bigoplus_{m \geq 0} H^0(V, \Sym^m(\mathcal{L}_1 \oplus
  \cdots \oplus \mathcal{L}_k))$ is quadratic.
\end{cor}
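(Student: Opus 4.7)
The plan is to identify the ring
\[
R := \bigoplus_{m \geq 0} H^0\bigl(V, \Sym^m(\mathcal{L}_1 \oplus \cdots \oplus \mathcal{L}_k)\bigr)
\]
with the semigroup ring $\C[S_P]$ of the Cayley sum polytope $P = P_1 * \cdots * P_k$ associated to the polytopes $P_1, \ldots, P_k$ of $\mathcal{L}_1, \ldots, \mathcal{L}_k$, and then invoke the preceding corollary.

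First, I would decompose the symmetric power of the direct sum into a sum of tensor powers of line bundles:
\[
\Sym^m(\mathcal{L}_1 \oplus \cdots \oplus \mathcal{L}_k) \;=\; \bigoplus_{m_1 + \cdots + m_k = m} \mathcal{L}_1^{m_1} \otimes \cdots \otimes \mathcal{L}_k^{m_k}.
\]
Summing over $m$ and taking global sections refines the $\N$-grading on $R$ to an $\N^k$-grading with graded pieces $R_{(m_1, \ldots, m_k)} = H^0(V, \mathcal{L}_1^{m_1} \otimes \cdots \otimes \mathcal{L}_k^{m_k})$, and the algebra multiplication becomes the canonical multiplication of sections of tensor products of line bundles.

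Next I would match this multi-graded ring with $\C[S_P]$. The polytope associated to $\mathcal{L}_1^{m_1} \otimes \cdots \otimes \mathcal{L}_k^{m_k}$ is the Minkowski sum $m_1 P_1 + \cdots + m_k P_k$, so $R_{(m_1, \ldots, m_k)}$ has a canonical character basis indexed by $\Lambda(m_1 P_1 + \cdots + m_k P_k)$. On the other hand, by the construction of the Cayley sum, a point of $\overline{S_P}$ of degree $m$ lying at height $m_1 e_1 + \cdots + m_k e_k \in \Z^k$ (with $\sum_i m_i = m$) is precisely an element of $\Lambda(m_1 P_1 + \cdots + m_k P_k)$, placed at that height. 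Both products are induced by addition of characters in $X$ together with addition in $\Z^k$, so this gives an isomorphism $R \cong \C[S_P]$ of $\N^k$-graded algebras.

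Finally, the preceding corollary applied to the tuple $(P_1, \ldots, P_k)$ states that $\C[S_P]$ is quadratic, which transports along the above isomorphism to the quadraticity of $R$. The only non-formal step is checking that the integer-height slices of the dilate $m(P_1 * \cdots * P_k)$ are exactly the Minkowski sums $m_1 P_1 + \cdots + m_k P_k$ (and that the lattice offsets specified by $\Lambda(\cdot)$ match up), but this is immediate from the definition of the Cayley sum and requires no new combinatorial input; no analog of the root-move arguments used to establish the previous corollary is needed here.
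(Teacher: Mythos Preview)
Your proposal is correct and matches the paper's approach exactly: the paper states this corollary simply as ``the toric interpretation'' of the preceding Cayley sum corollary, without further proof, and your argument spells out precisely this identification of $\bigoplus_{m} H^0(V,\Sym^m(\bigoplus_i \mathcal{L}_i))$ with $\C[S_P]$ for $P = P_1 * \cdots * P_k$ via the decomposition $\Sym^m(\bigoplus_i \mathcal{L}_i) = \bigoplus_{\sum m_i = m} \bigotimes_i \mathcal{L}_i^{m_i}$.
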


\subsection{Diagonal splitness}\label{ss:diag-split-intro}
A closely related toric variety to $V$, studied in, e.g.,
\cite{payne}, is the one whose fan is such that its rays (i.e.,
one-dimensional cones) are generated by the elements of $\Delta$: so,
its initial lattice is $Y$, dual to the initial lattice of $V$.
Denote this variety by $U$.

Suppose that $Q$ is an orthogonal polytope corresponding to the fan
associated to $U$, i.e., one which describes an equivariant line
bundle on $U$. Then, the main result of \emph{op.~cit.} was that, in
the case that the root system is of type $A, B, C$, or $D$, the
semigroup ring $\C[S_Q]$ is Koszul (and in particular, $S_Q$ is
normal and $\C[S_Q]$ is quadratic).  This was proved by showing that
$Q$ is always diagonally split (see, e.g., \emph{op.~cit.}, or \S
\ref{s:not-diag-split} below).  Note that, unlike in the present
paper, the arguments did not extend to the exceptional types $E, F$,
or $G$, and the speciality and ampleness assumptions were not
required.

  In contrast, in \S \ref{s:not-diag-split} below, we show that,
  except in the cases $A_1, A_2, A_3$, and $B_2$, the ample polytopes
  $P$ associated to the toric variety $V$ considered in this paper are
  not diagonally split, and therefore the above argument cannot be
  applied in our case for \emph{any} root systems other than these
  four.

\subsection{Organization of the paper}

In \S \ref{s:main-proof} we prove Theorem \ref{main}, where a crucial
step involves using a lemma of Stembridge (\cite[Cor. 2.7]{stem2})
stating that in the usual partial order of $\sigma$-dominant weights,
a weight $\nu$ covers another one $\nu'$ if and only if the difference
$\nu - \nu'$ is a root that is positive with respect to $\sigma$. In
\S \ref{s:numbers-game} we recall the numbers game with a cutoff (from
\cite{qendrimthesis}; see also \cite{GS}), which gives a useful
language to prove Theorem \ref{main2s}. The proof of the theorem is
then given in \S$\!$\S \ref{s:main2s-proof} and \ref{s:lemmaproofs}.
Note that one of our auxiliary results (Lemma \ref{m2lem1})
generalizes the above lemma of Stembridge. In \S
\ref{s:not-diag-split} we show that ample polytopes for the toric
varieties $V$ as above are not diagonally split (with the exception of
the cases when the root system $\Phi$ is of type $A_1, A_2, A_3$, or
$B_2$).

In the appendix, we give a generalization of Theorem \ref{main2s} in
terms of the numbers game: these allow one to restrict the type of
tuples needed in the equivalence $\sim$ above.

\subsection{Acknowledgements}
We thank Sam Payne for helpful comments.  The first author was
supported by an EPDI Fellowship. The second author is an AIM Five-Year
Fellow, and was partially supported by the ARRA-funded NSF grant
DMS-0900233. We thank IHES and MIT for hospitality.

\section{Proof of Theorem \ref{main}}\label{s:main-proof}

Recall that for a cone $\sigma \in F_n$, we denote by $\{ \alpha_{i,
  \sigma}: i \in I\} \subset \Delta$ the corresponding set of simple
roots. For a root $\gamma \in \Delta$, we say that it is positive or
negative with respect to the chamber $\sigma$ if $\gamma$ can be
written as a nonnegative or nonpositive linear combination of the
elements of $\{ \alpha_{i, \sigma}: i \in I\}$, respectively. We write
$D_{\sigma}$ for the set of $\sigma$-dominant elements of $X
\otimes_{\Z} \R$.

Note that $P$ is the convex hull of the points $\{ \mu_{\sigma} \in X:
\sigma \in F_n \}$. The next two lemmas will allow us to better
understand the shape of the polytope $P$. The first one uses the fact
that $P$ is ample and the second one that $P$ is
special.
\begin{lemma}\label{one}
\emph{(}see, e.g., \cite[Lemma 12.1, p.~445]{akot}\emph{)}
$$P = \bigcap_{\sigma \in F_n} C^*_{\sigma},$$ where $C^*_{\sigma} : =
\{ \mu_{\sigma} - \sum_{i=1}^{n} t_i \alpha_{i, \sigma} : t_i \in
\R_{\geq 0} \}$.
\end{lemma}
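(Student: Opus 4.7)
The plan is to prove the two inclusions $P \subseteq \bigcap_\sigma C^*_\sigma$ and $\bigcap_\sigma C^*_\sigma \subseteq P$ separately, using the identification $C^*_\sigma = \mu_\sigma - \sigma^\vee$, where $\sigma^\vee = \sum_{i \in I} \R_{\geq 0}\alpha_{i,\sigma} \subset X \otimes_\Z \R$ is the cone dual to the Weyl chamber $\sigma \subset Y^\vee \otimes_\Z \R$.

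For $P \subseteq \bigcap_\sigma C^*_\sigma$, since each $C^*_\sigma$ is convex and $P$ is the convex hull of the $\mu_{\sigma'}$, it suffices to verify $\mu_\sigma - \mu_{\sigma'} \in \sigma^\vee$ for every pair $\sigma, \sigma' \in F_n$. I would prove this by induction on the number of root hyperplanes separating $\sigma$ from $\sigma'$. The base case $\sigma = \sigma'$ is trivial; for adjacent $\sigma, \sigma'$ the ample data gives $\mu_\sigma - \mu_{\sigma'} = r_{\sigma,\sigma'}\alpha_{\sigma,\sigma'}$ with $r_{\sigma,\sigma'} > 0$, and since $\alpha_{\sigma,\sigma'}$ is positive on $\sigma$ it is a nonnegative combination of the simple roots $\{\alpha_{i,\sigma}\}$, so the difference lies in $\sigma^\vee$. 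For the inductive step with $\sigma' \neq \sigma$, I would use that $\sigma'$ is the intersection of the closed half-spaces bounded by its walls: if no wall of $\sigma'$ separated $\sigma$ from $\sigma'$, then $\sigma \subseteq \sigma'$, and since each of $\sigma, \sigma'$ is a full-dimensional chamber of the Weyl arrangement this forces $\sigma = \sigma'$. Pick a wall $W$ of $\sigma'$ separating $\sigma$ from $\sigma'$ and let $\sigma''$ be the chamber adjacent to $\sigma'$ across $W$; then $\sigma''$ lies on the same side of $W$ as $\sigma$, is separated from $\sigma$ by exactly one fewer hyperplane than $\sigma'$, and $\alpha_{\sigma'',\sigma'}$ is positive on $\sigma$. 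By induction, $\mu_\sigma - \mu_{\sigma''} \in \sigma^\vee$, and the adjacent case applied to $\sigma'',\sigma'$ gives $\mu_{\sigma''} - \mu_{\sigma'} \in \sigma^\vee$; summing yields the claim.

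For the reverse inclusion, set $Q := \bigcap_\sigma C^*_\sigma$, a closed convex set. First I would check that $Q$ is bounded: its recession cone is $-\bigcap_\sigma \sigma^\vee$, and completeness of the Weyl fan ($\bigcup_\sigma \sigma = Y^\vee \otimes_\Z \R$) forces any $x \in \bigcap_\sigma \sigma^\vee$ to pair nonnegatively with every vector and hence to vanish; so $Q$ is a compact polytope. To conclude $Q = P$, I would identify the vertices of $Q$: for any $v$ in the interior of a chamber $\sigma$, the inequalities $\langle \alpha_{i,\sigma}, v\rangle > 0$ imply that $\langle \cdot, v\rangle$ is strictly maximized on $C^*_\sigma$ at $x = \mu_\sigma$, and the first inclusion ensures $\mu_\sigma \in Q$; hence $\mu_\sigma$ is the unique maximizer of $\langle \cdot, v\rangle$ on $Q$. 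Since generic $v$'s fill the interiors of all chambers and the ample condition keeps the $\mu_\sigma$ distinct, the vertex set of $Q$ is precisely $\{\mu_\sigma : \sigma \in F_n\}$, giving $Q = \mathrm{conv}\{\mu_\sigma\} = P$.

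The main obstacle is the Coxeter-theoretic input in the inductive step, namely that a chamber $\sigma' \neq \sigma$ always admits a wall separating it from $\sigma$. I sketched a direct argument from the intersection-of-half-spaces description above, but one could equally well build the first inclusion from a minimal gallery coming from a reduced expression of the Weyl group element $w$ with $w \sigma = \sigma'$, or simply appeal to the standard support-function formalism for ample toric divisors, in which $P$ is automatically $\{x : \langle x, v\rangle \leq \psi(v) \text{ for all } v\}$ for the strictly concave support function $\psi$ with $\psi|_\sigma = \langle \mu_\sigma, \cdot\rangle$.
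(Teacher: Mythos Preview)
The paper does not supply its own proof of this lemma; it simply cites \cite[Lemma 12.1, p.~445]{akot}. So there is no in-paper argument to compare against.

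Your proof is correct and follows a standard route. The first inclusion via induction along a gallery (finding a wall of $\sigma'$ separating it from $\sigma$, crossing to the adjacent $\sigma''$, and using that $\alpha_{\sigma'',\sigma'}$ is then $\sigma$-positive) is exactly the usual argument. For the reverse inclusion, your identification of the vertices of $Q=\bigcap_\sigma C^*_\sigma$ by maximizing linear functionals $\langle \cdot, v\rangle$ with $v$ in an open chamber is clean; note that you do not actually need distinctness of the $\mu_\sigma$---it suffices that every vertex of $Q$ is among the $\mu_\sigma$, giving $Q=\mathrm{conv}(\text{vertices of }Q)\subseteq \mathrm{conv}\{\mu_\sigma\}=P$, which together with $P\subseteq Q$ finishes. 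Your closing remark that this is equivalently the support-function description of an ample toric divisor is apt and is how such statements are typically packaged in the toric literature.
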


\begin{lemma}\label{two}
  \emph{(}see, e.g., \cite[Lemma 12.2, p.~445]{akot}\emph{)}  $$P
  \cap D_{\sigma} = C^*_{\sigma} \cap D_{\sigma}.$$
\end{lemma}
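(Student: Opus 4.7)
The inclusion $P \cap D_\sigma \subseteq C^*_\sigma \cap D_\sigma$ is immediate from Lemma \ref{one}, since $P \subseteq C^*_\sigma$. The real content is the reverse inclusion: given $x \in C^*_\sigma \cap D_\sigma$, I want to show $x \in P$, which by Lemma \ref{one} amounts to showing $x \in C^*_{\sigma'}$ for every chamber $\sigma' \in F_n$.

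The plan is to fix such a $\sigma'$, choose a \emph{reduced} gallery $\sigma = \sigma_0, \sigma_1, \ldots, \sigma_m = \sigma'$ of adjacent chambers, and prove $x \in C^*_{\sigma_k}$ by induction on $k$. For each step, let $\beta_k \in \Delta$ be the unique root that is positive on $\sigma_k$ and negative on $\sigma_{k+1}$, so that $\mu_{\sigma_k} - \mu_{\sigma_{k+1}} = r_k \beta_k$ for some $r_k \in \Z_{>0}$. Writing $\mu_{\sigma_k} - x = \sum_i t_i \alpha_{i,\sigma_k}$ with $t_i \geq 0$ (the inductive hypothesis), I would rewrite $\mu_{\sigma_{k+1}} - x$ in the simple-root basis of $\sigma_{k+1}$, using the standard transformation $\alpha_{j,\sigma_{k+1}} = -\beta_k$ and $\alpha_{i,\sigma_{k+1}} = s_{\beta_k}(\alpha_{i,\sigma_k})$ for $i \neq j$ (where $\alpha_{j,\sigma_k} = \beta_k$). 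A short computation shows that all coefficients other than that of $\alpha_{j,\sigma_{k+1}} = -\beta_k$ are inherited unchanged from the $t_i$ and therefore remain nonnegative, so the entire inductive step reduces to verifying nonnegativity of the new coefficient of $-\beta_k$.

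This last step is the crux, and it rests on two inequalities. First, the \emph{specialness} of $P$ applied to $\mu_{\sigma_{k+1}}$ gives $\langle \mu_{\sigma_{k+1}}, \alpha_{j,\sigma_{k+1}}^\vee \rangle \geq 0$; since $\alpha_{j,\sigma_{k+1}}^\vee = -\beta_k^\vee$ and $\mu_{\sigma_{k+1}} = \mu_{\sigma_k} - r_k \beta_k$, this translates into the upper bound $\langle \mu_{\sigma_k}, \beta_k^\vee \rangle \leq 2 r_k$. Second, the $\sigma$-\emph{dominance} of $x$ gives $\langle x, \beta_k^\vee \rangle \geq 0$: in a reduced gallery starting at $\sigma$, every crossed root $\beta_k$ is positive with respect to $\sigma$, hence $\beta_k^\vee$ is a nonnegative combination of the simple coroots of $\sigma$, and $x \in D_\sigma$ closes the loop. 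Pairing the inductive hypothesis against $\beta_k^\vee$ then combines these two bounds to give the desired nonnegativity of the new coefficient.

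The main obstacle is conceptual rather than computational: one must recognize that $x \in D_\sigma$ can only be exploited at the \emph{starting} chamber $\sigma$, so the induction has to be carried along a reduced gallery, which is exactly what keeps every $\beta_k$ positive with respect to $\sigma$ throughout. Once this routing is in place, the inductive step is a short linear-algebraic computation in terms of the Cartan integers $\langle \alpha_{i,\sigma_k}, \beta_k^\vee \rangle$, and the two hypotheses (specialness of $P$ and $\sigma$-dominance of $x$) sandwich the relevant coefficient from above and below.
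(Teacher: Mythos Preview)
The paper does not give its own proof of this lemma; it simply cites \cite[Lemma 12.2, p.~445]{akot}. So there is no argument in the paper to compare against---your task was really to supply a proof where the paper outsourced one.

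Your argument is correct. The gallery induction with a reduced word is the right routing, and the two key inputs you isolate---specialness at $\sigma_{k+1}$ giving $\langle \mu_{\sigma_k},\beta_k^\vee\rangle\le 2r_k$, and $\sigma$-dominance of $x$ giving $\langle x,\beta_k^\vee\rangle\ge 0$ via the $\sigma$-positivity of $\beta_k$ along a reduced gallery---are exactly what is needed. One small point worth making explicit when you write it out: the ``sandwiching'' is not a bare substitution of the two bounds into the formula for the new coefficient $t_j'$. Writing $c_i=\langle\alpha_{i,\sigma_k},\beta_k^\vee\rangle$, one has $t_j'=r_k-t_j-\sum_{i\ne j}t_ic_i$, and pairing the inductive hypothesis with $\beta_k^\vee$ yields $2t_j+\sum_{i\ne j}t_ic_i=\langle\mu_{\sigma_k},\beta_k^\vee\rangle-\langle x,\beta_k^\vee\rangle\le 2r_k$. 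The conclusion $t_j'\ge 0$ then also uses $c_i\le 0$ for $i\ne j$ (off-diagonal Cartan entries are nonpositive) together with $t_i\ge 0$; concretely, $t_j'\ge -\tfrac12\sum_{i\ne j}t_ic_i\ge 0$. This is implicit in your sketch but deserves a line, since plugging the two bounds alone into $t_j'=r_k+t_j-\langle\mu_{\sigma_k},\beta_k^\vee\rangle+\langle x,\beta_k^\vee\rangle$ only gives $t_j'\ge t_j-r_k$, which is not enough by itself.
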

Specializing to the points in $\Lambda(P)$,  we obtain
\begin{equation}\label{e:ltwo-int}
\Lambda(P) \cap D_{\sigma} = \{
\nu \in D_{\sigma} \cap X : \nu \stackrel{\sigma}{\preceq}
\mu_{\sigma}\},
\end{equation}
where $\stackrel{\sigma}{\preceq}$ stands for the
partial order in $X$ determined by the chamber $\sigma$, i.e., $\nu
\stackrel{\sigma}{\preceq} \mu_{\sigma}$ if $\mu_{\sigma} - \nu$ is a
nonnegative integral linear combination of the roots $\{ \alpha_{i,
  \sigma}: i \in I \}$.

Fix a chamber $\sigma \in F_{n}$. Since $W$ acts simply
transitively on $\{ D_{\tau}$: $\tau \in F_n \}$ and since
$\Lambda(mP) = \bigcup_{w \in W} (\Lambda(mP) \cap (w D_{\sigma}))$,
it suffices to prove the statement of Theorem \ref{main} for $z \in
\Lambda(m P) \cap D_{\sigma} $.

By \eqref{e:ltwo-int}, every element $z
\in \Lambda(m P) \cap D_{\sigma}$ satisfies $z
\stackrel{\sigma}{\preceq} m \mu_{\sigma}$. Clearly, for $z = m
\mu_{\sigma}$ the assertion of Theorem \ref{main} is true. So, to
prove the theorem, it suffices to show that, whenever it holds for $x
\in \Lambda(m P) \cap D_{\sigma}$, then it also holds for every $z
\in \Lambda(m P) \cap D_{\sigma}$ such that $x$ covers $z$.  Here,
$x$ \emph{covers} $z$ means that $z \stackrel{\sigma}{\preceq} t
\stackrel{\sigma}{\preceq} x$ and $t \in D_{\sigma} \cap X$
implies that $t = z$ or $t = x$.

So, assume that the statement of the theorem is true for $x \in
\Lambda(m P) \cap D_{\sigma}$ and that $x$ covers $z \in \Lambda(m P)
\cap D_{\sigma}$. By a lemma of Stembridge (\cite[Cor. 2.7]{stem2};
see also \cite[Lemma 2.3]{rapoport} and Remark \ref{strem} below),
there exists a $\sigma$-positive root $\beta$ such that $x- z =
\beta$. Since $z$ is $\sigma$-dominant and $\beta$ is
$\sigma$-positive, $\langle z ,\beta^{\vee} \rangle \geq 0$, and thus
$\langle x -\beta ,\beta^{\vee} \rangle \geq 0$, i.e., $$\langle x
,\beta^{\vee} \rangle \geq 2.$$

By assumption, $x$ can be written as a sum $x=x_1 +
\cdots + x_m$, with $x_i \in \Lambda(P)$, $\forall i =1,\ldots,
m$. The last inequality guarantees that $\langle x_j ,\beta^{\vee}
\rangle \geq 1$ for at least one $j \in \{1,\ldots,m\}$. The
proposition below then ensures that $x_j-\beta \in P$:

\begin{prop}\label{mainclaim}
  Let $y \in \Lambda(P)$ and $\beta \in \Delta$. If $\langle y,
  \beta^{\vee} \rangle \geq 1$, then $y-\beta \in \Lambda(P)$.
\end{prop}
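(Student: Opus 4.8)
The plan is to reduce to a single Weyl chamber by symmetry, and then use the description of $\Lambda(P)$ afforded by Lemmas \ref{one} and \ref{two} together with a reflection argument. First, since $\beta^\vee$ and $-\beta^\vee$ play symmetric roles (if $\langle y, \beta^\vee\rangle \geq 1$ we want $y - \beta \in \Lambda(P)$, while if $\langle y, (-\beta)^\vee\rangle \geq 1$ we want $y + \beta \in \Lambda(P)$), we may assume $\beta$ is $\sigma$-positive for a chamber $\sigma$ with $y \in D_\sigma$; indeed, pick any chamber $\sigma$ on whose closure $y$ lies, and replace $\beta$ by $-\beta$ if necessary so that $\langle y, \beta^\vee \rangle \geq 1 > 0$ forces $\beta$ to be $\sigma$-positive. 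Note $y - \beta \in X$ lies in the correct coset mod $Y$ since $\beta \in \Delta \subseteq Y$, so it suffices to prove $y - \beta \in P$.

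Next I would use the reflection $s_\beta$. Because $\langle y, \beta^\vee \rangle \geq 1$, we have $s_\beta(y) = y - \langle y, \beta^\vee\rangle \beta \stackrel{\sigma}{\preceq} y - \beta \stackrel{\sigma}{\preceq} y$, so $y - \beta$ lies on the root string between $y$ and $s_\beta(y)$. Now $s_\beta$ maps $D_\sigma$ to $D_{\sigma'}$ for the adjacent-type chamber $\sigma' = s_\beta(\sigma)$, and by Lemma \ref{two} applied at $\sigma$ we have $y \in C^*_\sigma$, i.e. $\mu_\sigma - y$ is a nonnegative real combination of the $\alpha_{i,\sigma}$; I want to conclude the analogous containment for $y - \beta$. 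The key point: since $P$ is \emph{special}, $\mu_\sigma$ is $\sigma$-dominant, hence by \eqref{e:ltwo-int} the whole ``$\sigma$-dominant slice'' $\Lambda(P) \cap D_\sigma$ is exactly $\{\nu \in D_\sigma \cap X : \nu \stackrel{\sigma}{\preceq} \mu_\sigma\}$. So if $y - \beta$ were $\sigma$-dominant we would be immediately done, because $y - \beta \stackrel{\sigma}{\preceq} y \stackrel{\sigma}{\preceq} \mu_\sigma$. The trouble is that $y - \beta$ need not be $\sigma$-dominant.

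To handle that, I would instead work in the chamber adapted to $y - \beta$: let $\tau$ be a chamber with $y - \beta \in D_\tau$, and aim to show $y - \beta \stackrel{\tau}{\preceq} \mu_\tau$, which by \eqref{e:ltwo-int} gives membership in $\Lambda(P)$. Here I expect to use Lemma \ref{one}: $y - \beta \in P$ iff $y - \beta \in C^*_\tau$ for \emph{every} $\tau$, i.e. $\mu_\tau - (y - \beta) \in \sum \R_{\geq 0}\alpha_{i,\tau}$. Since $y \in P$ we already know $\mu_\tau - y \in \sum \R_{\geq 0}\alpha_{i,\tau}$, so it suffices to show that adding $\beta$ keeps us in the cone $C^*_\tau$; equivalently, writing $\beta$ in the simple roots of $\tau$, one checks the coefficients don't push us out, using that $\langle y, \beta^\vee\rangle \geq 1$ controls how far $y$ sits from the wall $\{\langle\cdot,\beta^\vee\rangle = 0\}$. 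The cleanest route is probably: among all $\tau$ with $y - \beta \in \overline{D_\tau}$, the roots that are $\tau$-positive but $\sigma$-negative are exactly those ``crossed'' in passing from $y$ to $y - \beta$, and for such a root $\gamma$ one shows $\langle \mu_\tau - (y-\beta), \gamma^\vee \rangle \geq 0$ by combining speciality of $\mu_\tau$ with the bound on $\langle y,\beta^\vee\rangle$. The main obstacle, and the step I would spend the most care on, is exactly this: verifying that $y-\beta$ stays inside $C^*_\tau$ for the chambers $\tau$ adjacent to the $\sigma$-region — i.e. controlling what happens when $y - \beta$ crosses a root hyperplane — and it is precisely here that the speciality hypothesis $(\dagger)$, rather than mere ampleness, must be invoked.
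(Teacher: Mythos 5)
Your setup matches the paper's: reduce to showing $y-\beta \stackrel{\tau}{\preceq} \mu_\tau$ chamber by chamber, observe that chambers for which $\beta$ is positive are immediate, and recognize that speciality must enter for the chambers where $\beta$ is negative. But at exactly that point --- which you yourself flag as ``the main obstacle'' --- the proposal stops being a proof and becomes a statement of intent (``one checks the coefficients don't push us out,'' ``one shows $\langle \mu_\tau - (y-\beta), \gamma^\vee\rangle \geq 0$ by combining speciality with the bound''). This is the entire content of the proposition, and it is not routine. The paper's argument here splits the set $F_-$ of chambers where $\beta$ is negative into $F_-'$ (adjacent to a chamber where $\beta$ is positive) and $F_-''$; proves via ampleness (strict positivity of $r_{\sigma,\sigma'}$) that $\langle \mu_\sigma, \beta^\vee\rangle \leq -1$ for $\sigma \in F_-''$; uses a segment argument ($y - t\beta$ for $0 \leq t < 1$) to show the $F_-''$ chambers can never be the obstruction because $\langle y-\beta,\beta^\vee\rangle \geq -1$; and then, for $\sigma \in F_-'$, runs an explicit change-of-basis computation between $\sigma$ and the adjacent chamber $\tau$ (where $\beta = \alpha_j = -\alpha_{j,\sigma}$) to show the relevant coefficient $h_{j,\sigma}$ is at least $1$, deriving a contradiction with $\langle \mu_\sigma, \alpha_j^\vee\rangle \leq 0$ if it were $0$. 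None of these steps appears in your proposal, and the subdivision of $F_-$ is an idea you would need and do not have.

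Two smaller points. First, you conflate two different sufficient criteria: checking $y - \beta \in C^*_\tau$ for \emph{every} chamber (Lemma \ref{one}, which needs no dominance of $y-\beta$) versus checking $y-\beta \stackrel{\tau}{\preceq} \mu_\tau$ for the \emph{single} chamber $\tau$ with $y - \beta \in D_\tau$ (Lemma \ref{two}). Either can be made to work, but the proposal drifts between them. Second, membership in $C^*_\tau$ means the coefficients of $\mu_\tau - (y-\beta)$ in the basis $\{\alpha_{i,\tau}\}$ are nonnegative; these coefficients are read off by pairing with fundamental \emph{coweights}, not with coroots $\gamma^\vee$ as your sketch suggests, so the inequality you propose to verify is not the one you need.
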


Now we put $z_i = x_i, \forall i \neq j$, and $z_j = x_j - \beta$, and
then $z = z_1 + \cdots + z_m$, which verifies the theorem.  This
concludes the proof of Theorem \ref{main}, and it only remains to prove
the last proposition.

\subsection{Proof of Proposition \ref{mainclaim}}

Let $y \in \Lambda(P)$ and $\beta \in \Delta$ be
such that $\langle y ,\beta^{\vee} \rangle \geq 1$.

We must show that, for every $\sigma \in F_n$, $y-\beta
\stackrel{\sigma}{\preceq} \mu_{\sigma}$. Note that, since $y \in
\Lambda(P)$, there exist nonnegative integers $h_{i, \sigma}$,
$i=1,\ldots, n$, such that
\begin{equation}\label{h}
\mu_{\sigma} - y = \sum_{i=1}^{n} h_{i, \sigma} \alpha_{i, \sigma}.
\end{equation}
Let $\beta = \sum_{i=1}^{n} b_{i, \sigma} \alpha_{i, \sigma}$. Then
$\mu_{\sigma} - (y - \beta) = \sum_{i=1}^{n} (h_{i, \sigma} + b_{i,
  \sigma}) \alpha_{i, \sigma}$. If the chamber $\sigma$ is such that
$\beta$ is positive with respect to it, then clearly $y-\beta
\stackrel{\sigma}{\preceq} \mu_{\sigma}$.

We are therefore left to consider only the chambers with respect to
which $\beta$ is negative. Denote the set of such chambers by
$F_{-}$. Then we can write $F_{-}$ as a disjoint union $F_{-} = F_{-}'
\cup F_{-}''$, where $$F_{-}' = \{ \tau \in F_{-} : \tau \, \text{is
  adjacent to a chamber with respect to which}\, \beta\, \text{is
  positive}\},$$ and $F_{-}'' = F_{-} \setminus F_{-}'$.

Since $P$ is special, $\langle \mu_{\sigma}, \beta^{\vee} \rangle \leq
0, \forall \sigma \in F_{-}$. Moreover, we claim that $\langle
\mu_{\sigma} , \beta^{\vee} \rangle \leq -1, \forall \sigma \in
F_{-}''$. This follows from the previous statement because $P$ is
ample: indeed, if $\sigma \in F_-''$, then by definition $-\beta$ is
positive but not simple for $\sigma$, so there exists $\alpha_{i,
  \sigma}$ (necessarily not equal to $-\beta$) such that $\langle
\alpha_{i, \sigma}, -\beta^\vee \rangle \geq 1$.  Therefore, if
$\langle \mu_\sigma, \beta^\vee \rangle = 0$, then if $\sigma'$ is the
chamber adjacent to $\sigma$ corresponding to $\alpha_{i, \sigma}$, it
follows that $\langle \mu_{\sigma'}, \beta^\vee \rangle = \langle
\mu_\sigma-r_{\sigma, \sigma'} \alpha_{i, \sigma}, \beta^\vee \rangle
\geq r_{\sigma,\sigma'} > 0$.  However, $\sigma' \in F_-$ by
definition, which furnishes a contradiction.

To prove the proposition, we claim that it suffices to show
that $y-\beta \stackrel{\sigma}{\preceq} \mu_{\sigma}$ when $\sigma
\in F_{-}'$. Since $\langle y - \beta, \beta^{\vee}\rangle \geq -1$ by
assumption, it will then follow that $y-\beta$ lies in all of the
half-spaces whose intersection defines $P$ (whose boundaries are
maximal-dimensional facets of $P$), except possibly for those whose
boundary planes meet vertices of $P$ only in $F_-''$. Suppose, for
sake of contradiction, that $y-\beta \notin P$.  Let $0 \leq t < 1$ be
maximal such that $y-t\beta \in P$. Then $y-t\beta$ lies on a boundary
plane meeting vertices of $P$ only in $F_-''$.  Since $\langle
\mu_{\sigma}, \beta^{\vee} \rangle \leq -1, \forall \sigma \in
F_{-}''$, it follows that $\langle y-\beta, \beta^\vee \rangle <
\langle y-t\beta, \beta^\vee \rangle \leq -1$. This is impossible,
since $\langle y, \beta^\vee \rangle \geq 1$.  Thus, $y-\beta \in P$,
as desired.

Thus, take $\sigma \in F_{-}'$. In the remainder of the proof, we show
that $y-\beta \stackrel{\sigma}{\preceq} \mu_{\sigma}$.  
Denote by $\tau$ a chamber in
$F$ that is adjacent to $\sigma$ and such that $\beta$ is positive
with respect to $\tau$. We write $\alpha_i$ instead of $\alpha_{i,
  \tau}$ for the simple roots corresponding to $\tau$. Since $\beta$
is negative with respect to $\sigma$, there exists $j \in I$ such
that $\beta = \alpha_{j} = -\alpha_{j,\sigma}$. Moreover,
\begin{equation}\label{alpha}
\alpha_{i} = \alpha_{i,\sigma} + \langle \alpha_i , \beta^{\vee} \rangle \beta.
\end{equation}

Since $P$ is ample, $\mu_{\sigma} = - r_{\tau,\sigma} \beta + \mu_{\tau}$,
for $r_{\tau, \sigma} > 0$.
Thus, using (\ref{h}) and applying (\ref{alpha}), we get 
$$\sum_{i=1}^{n} h_{i, \sigma}\alpha_{i,\sigma}
= - r_{\tau, \sigma} \beta + \sum_{i=1}^{n} h_{i, \tau}
(\alpha_{i,\sigma} + \langle \alpha_i , \beta^{\vee} \rangle \beta).$$
Since $\{\alpha_{i,\sigma}: i \in I\}$ is a basis for $\Delta$ and
$\alpha_{j,\sigma}=-\beta$, from the last identity we deduce that
\begin{gather}
  h_{i, \sigma} = h_{i, \tau}, \forall i \in I \setminus \{\,j\}, 
\quad \text{and} \notag \\
\label{hqk}
h_{j, \sigma} = r_{\tau, \sigma} - h_{j, \tau} - \sum_{i \in I
  \setminus \{\,j \}} h_{i, \tau} \langle \alpha_i, \beta^{\vee}
\rangle.
\end{gather}

Now, $\mu_{\sigma} - (y-\beta) = \left( \sum_{i=1}^{n} h_{i, \sigma}
  \alpha_{i, \sigma} \right) - \alpha_{j, \sigma}$, so in order to
prove that $y-\beta \stackrel{\sigma}{\preceq} \mu_{\sigma}$, it
suffices to show that $h_{j, \sigma} \geq 1$. For a contradiction,
assume $h_{j, \sigma} = 0$.  From (\ref{hqk}) we then get
\begin{equation}\label{q1}
  h_{j,\tau} - r_{\tau, \sigma} = - \sum_{i \in I \setminus \{\,j \}} 
  h_{i, \tau} \langle \alpha_i, \beta^{\vee} \rangle.
\end{equation}
Next, recall that $\langle y, \beta^{\vee} \rangle \geq 1$, so
\begin{multline*}
  \langle \mu_{\sigma}, \beta^{\vee} \rangle = \langle \mu_{\tau} -
  r_{\tau, \sigma} \beta, \beta^{\vee} \rangle = \langle y +
  \sum_{i=1}^{n}
  h_{i, \tau} \alpha_{i}\, ,\beta^{\vee} \rangle - 2r_{\tau, \sigma} \\
  \geq 1 + 2h_{j, \tau} - 2r_{\tau, \sigma} + \sum_{i \in I \setminus
    \{\,j \}} h_{i, \tau} \langle \alpha_i, \beta^{\vee} \rangle = 1 -
  \sum_{i \in I \setminus \{\,j \}} h_{i, \tau} \langle \alpha_i,
  \beta^{\vee} \rangle,
\end{multline*}
where to get the last equality we used (\ref{q1}). But, the last
expression is strictly positive (since $\langle \alpha_i, \beta^\vee
\rangle = \langle \alpha_i, \alpha_{j}^\vee \rangle \leq 0$ for all $i
\neq j$), and, since the polytope $P$ is special, $\langle
\mu_{\sigma}, \alpha_{j}^{\vee} \rangle \leq 0$, a contradiction. This
ends the proof that $y-\beta \stackrel{\sigma}{\preceq} \mu_{\sigma}$
and concludes the proof of Proposition \ref{mainclaim}.

\section{The numbers game with a cutoff}\label{s:numbers-game}
In order to prove Theorem \ref{main2}, we use the language of
\emph{the numbers game with a cutoff}, from \cite{qendrim3} (see also
\cite{GS}). In this section we recall what we will need.

\subsection{The usual numbers game}
We first recall Mozes's numbers game \cite{mozes}, which has been
widely studied (e.g., in \cite{Pro-bru, Pro-min, DE, Erik-no1,
  Erik-no2, erikconf, Erik-no3, Erik-no4, eriksson, Wild-no1,
  Wild-no2}). Fix an unoriented, finite graph with no loops and no
multiple edges. Let $I$ be the set of vertices. Fix also a Cartan
matrix $C = (c_{ij})_{i,j \in I} \in \R^I \times \R^I$, such that
$c_{ii} = 2$ for all $i$, $c_{ij} = 0$ whenever $i$ and $j$ are not
adjacent, and otherwise $c_{ij}, c_{ji} < 0$, and either $c_{ij}
c_{ji} = 4 \cos^2(\frac{\pi}{n_{ij}})$ (when $n_{ij}$ is finite) or
$c_{ij} c_{ji} \geq 4$ (when $n_{ij} = \infty$).

We will only need to consider the case where our graph is the
underlying graph of a Dynkin diagram (undirected and without multiple
edges), and $C$ is the standard Cartan matrix for the diagram, i.e.,
$c_{ij} = \langle \alpha_i, \alpha_j^\vee \rangle$. In particular,
$c_{ij} \in \Z$ for all $i, j$.  Hence, the reader may assume this if
desired.

The \emph{configurations} of the game consist of vectors from
$\R^I$. The moves of the game are as follows: For any vector ${v} \in
\R^I$ and any vertex $i \in I$ such that ${v}_i < 0$, one may perform
the following move, called \emph{firing the vertex $i$}: ${v}$ is
replaced by the new configuration $f_i({v})$, defined by
\begin{equation*}
f_i({v})_j = v_j - c_{ij} v_i.
\end{equation*}
The entries ${v}_i$ of the vector ${v}$ are called
\emph{amplitudes}. The game terminates if all the amplitudes are
nonnegative.  Let us emphasize that \emph{only negative-amplitude
  vertices may be fired}.\footnote{In some of the literature, the
  opposite convention is used, i.e., only positive-amplitude vertices
  may be fired.}

\begin{prop}\cite{eriksson}
  The numbers game is \emph{strongly convergent}: if the game can
  terminate, then it must terminate, and in exactly the same number of
  moves and arriving at the same configuration, regardless of the
  choices made.
\end{prop}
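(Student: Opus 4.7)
The plan is to prove strong convergence by combining local confluence with a termination argument (Newman's lemma), and then to upgrade confluence to equality of path lengths. The geometric key is to identify a configuration $v \in \R^I$ with the point $\lambda_v := \sum_i v_i \omega_i$ in the appropriate (co)weight space, where $\omega_j$ is dual to $\alpha_j^\vee$ so that $\langle \lambda_v, \alpha_i^\vee \rangle = v_i$. A short calculation, using $\alpha_i = \sum_j c_{ij} \omega_j$, shows that the firing $f_i$ corresponds exactly to the simple reflection $s_i$ acting on $\lambda_v$, and that the legality condition $v_i < 0$ is the condition that $s_i$ moves $\lambda_v$ across the hyperplane $H_{\alpha_i}$ from its negative to its positive side.

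Next I would establish the local diamond: for $v$ with $v_i, v_j < 0$ and $i \neq j$, I would produce firing sequences of equal length from $f_i(v)$ and $f_j(v)$ ending at a common configuration. When $i, j$ are non-adjacent, $c_{ij} = c_{ji} = 0$ forces $f_i f_j(v) = f_j f_i(v)$. The substantive case is $i, j$ adjacent, where I would show that the two alternating sequences $f_i f_j f_i \cdots$ and $f_j f_i f_j \cdots$ of length $n_{ij}$ are legal at each step (the coordinate being fired is negative at the moment it is fired) and arrive at the same configuration. This is the dihedral braid identity; it reduces to a rank-two calculation in the $\{i,j\}$-subspace, and the hypothesis $c_{ij} c_{ji} = 4\cos^2(\pi/n_{ij})$ (respectively $\geq 4$ when $n_{ij} = \infty$) is precisely what makes the associated rank-two reflection group close up after $n_{ij}$ alternations.

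For termination, the reflection-group picture supplies a strict monovariant: the number of reflection hyperplanes separating $\lambda_v$ from the dominant chamber is a nonnegative integer that drops by exactly one after each legal firing (crossing the corresponding wall toward the dominant side), so any legal play has length bounded by this integer and the game terminates whenever termination is possible. Newman's lemma applied to the resulting terminating, locally confluent rewriting system then gives global confluence: any two terminating plays from $v$ end at the same configuration. Because both sides of each local diamond have equal length, a routine induction on the termination distance upgrades confluence to equality of the number of moves, which is what strong convergence asserts. The main technical obstacle is the rank-two dihedral identity; once that is in hand, the rest is formal, and in fact one could instead invoke directly the standard theorem that all reduced expressions for a given element of a Coxeter group have the same length, bypassing Newman's lemma altogether.
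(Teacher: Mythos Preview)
The paper does not give its own proof of this proposition: it is quoted verbatim from Eriksson and used as a black box, so there is nothing in the paper to compare your argument against.

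Your outline is nonetheless essentially the standard Coxeter-theoretic argument and is correct in substance. The identification of firing $f_i$ with the simple reflection $s_i$ on the contragredient representation is right, and once you have the monovariant $N(\lambda_v) = |\{\alpha\in\Phi^+ : \langle \lambda_v,\alpha^\vee\rangle<0\}|$ dropping by exactly one per legal move, you are already done: every play from a configuration admitting a terminating play has length exactly $N(\lambda_v)$, and the endpoint is the unique dominant point in the $W$-orbit. The Newman's-lemma detour through local confluence is thus superfluous, as you yourself observe at the end. It also carries a small wrinkle you do not address: when $n_{ij}=\infty$ the braid diamond does not close, so local confluence is not established at such a branching. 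This gap is easily filled---under the termination hypothesis one cannot encounter $v'_i,v'_j<0$ with $n_{ij}=\infty$, since the alternating play on $i,j$ would then be infinite---and in any case the paper explicitly restricts to the Dynkin case, where every $n_{ij}$ is finite. Your direct route via the length function is the clean one.
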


\subsection{The numbers game with a cutoff}

In \cite{qendrimthesis}, the numbers game \emph{with a cutoff} was
defined: The moves are the same as in the ordinary numbers game, but
the game continues (and in fact starts) only as long as all amplitudes
remain greater than or equal to $-1$.  Such configurations are called
\emph{allowed}. Every configuration which does not have this property
is called \emph{forbidden}, and upon reaching such a configuration the
game terminates (we lose). We call a configuration \emph{winning} if
it is possible, by playing the numbers game with a cutoff, to reach a
configuration with all nonnegative amplitudes.

In \cite{GS}, a simple criterion was given to determine when the
numbers game with a cutoff is winning.  We will restrict to the Dynkin
case, with $C$ the standard Cartan matrix. Let $\Delta$ be the set of
roots. Pick simple roots corresponding to the vertices of the Dynkin
diagram, and write $\Delta = \Delta_+ \sqcup -\Delta_+$, where
$\Delta_+$ is the set of positive roots.

We can view $\Delta \subseteq \Z^I$ and $\Delta_+ \subseteq \Z_{\geq
  0}^I$.  For $i \in I$, let $\alpha_i$ be the simple root
corresponding to $i$, which as an element of $\Z^I$ is the elementary
vector $(\alpha_i)_j = \delta_{ij}$. Note that, since $\alpha_i$
refers to a vector in $\Z^I$, in the case that $\alpha \in \Z^I$, we
will \emph{never} use $\alpha_i$ to refer to a component of $\alpha$,
reserving it exclusively for the elementary vector $\alpha_i \in
\Z^I$.

A useful description of $\Delta$ is
\begin{equation*}
\Delta = \bigcup_{i \in I} W \cdot \alpha_i,
\end{equation*}
where $W$ is the Weyl group generated by the simple reflections $s_i:
\R^I \rightarrow \R^I$, for all $i \in I$, given by
\begin{equation*}
s_i(\beta)_j =
\begin{cases} \beta_j, & \text{if $j \neq i$}, \\
  -\beta_i - \sum_{k \neq i} c_{ki} \beta_k, & \text{if $j = i$}.
\end{cases}
\end{equation*}

\begin{prop}\label{gsprop}\cite[Theorem 3.1, Corollary 5.10.(a)]{GS}
  Fix a Dynkin diagram with standard Cartan matrix $C$. Beginning with
  a configuration $v \in \R^I$, the numbers game with a cutoff is
  winning if and only if
\begin{equation} \label{gscond}
v \cdot \alpha \geq -1, \forall \alpha \in \Delta_+,
\end{equation}
and in this case, one always wins the numbers game with a cutoff, no
matter which moves are made, and arrives at the same final
configuration in the same total number of moves.
\end{prop}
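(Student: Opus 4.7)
My approach is to establish condition \eqref{gscond} as an invariant of the firing moves, so that both the ``if and only if'' assertion and the confluence statement reduce to a combinatorial analysis of how positive roots transform under the simple reflections. The first and central step will be the identity $f_i(v) \cdot \alpha = v \cdot s_i(\alpha)$, valid under the $W$-invariant bilinear pairing on $\R^I$ that intertwines firing with the Weyl action (in simply-laced types this is the standard Euclidean dot product on $\R^I$; in general one uses the unique $W$-invariant symmetric form, equivalently identifying $v$ with a weight and pairing against the coroot $\alpha^\vee$). Combined with the standard fact that $s_i$ permutes $\Delta_+ \setminus \{\alpha_i\}$ and sends $\alpha_i$ to $-\alpha_i$, the invariance of \eqref{gscond} will follow at once: for $\alpha \in \Delta_+ \setminus \{\alpha_i\}$ we get $f_i(v) \cdot \alpha = v \cdot s_i(\alpha) \geq -1$, and for $\alpha = \alpha_i$ we get $f_i(v) \cdot \alpha_i = -v_i > 0 \geq -1$.

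Next I would use the invariance to dispose of necessity. If the cutoff game is winning, ending at some non-negative $v^*$, then $v^* \cdot \alpha = \sum n_k v^*_k \geq 0 \geq -1$ for any $\alpha = \sum n_k \alpha_k \in \Delta_+$, so $v^*$ satisfies \eqref{gscond}. Since each $f_i$ is an involution, the sequence of moves leading from $v$ to $v^*$ can be reversed, and invariance applied along this reversed sequence transfers \eqref{gscond} back to $v$.

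For sufficiency, together with the claim that one always wins regardless of choices (in the same number of moves and reaching the same final configuration), I would argue forward: assuming \eqref{gscond} for $v$, invariance guarantees that every configuration $v'$ encountered satisfies \eqref{gscond}, whence $v'_j = v' \cdot \alpha_j \geq -1$ for every simple root $\alpha_j$; in other words, the cutoff is never triggered, so any play continues as an ordinary numbers game. Termination and confluence of choices then come from Eriksson's strong convergence for the ordinary numbers game (quoted in the proposition immediately preceding): in Dynkin type $W$ is finite, so the $W$-orbit of $v$ (and hence the set of configurations reachable by firing) is finite, ruling out infinite play, and strong convergence forces any two terminating sequences to agree in length and final configuration.

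The principal technical obstacle I expect is pinning down the pairing $v \cdot \alpha$ so that the intertwining identity $f_i(v) \cdot \alpha = v \cdot s_i(\alpha)$ holds uniformly across all positive roots, particularly in non-simply-laced types where the naive Euclidean dot product on $\R^I$ is not $W$-invariant and $s_i$ as defined on $\R^I$ in the text is not literally the same operator as $f_i$. Once the correct pairing is fixed and the identity verified, the remainder of the argument is a routine bookkeeping over positive roots combined with the cited convergence theorem.
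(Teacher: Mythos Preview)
The paper does not supply its own proof of this proposition: it is quoted verbatim from \cite[Theorem~3.1, Corollary~5.10(a)]{GS} and used thereafter as a black box (see the sentence introducing \eqref{gscond2} and Corollary~\ref{ngcor}). So there is no argument in the present paper to compare your proposal against.

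That said, your outline is essentially the standard one and is correct. The key invariance step together with Eriksson's strong convergence (also quoted without proof just before this proposition) gives both directions and the confluence claim. Your flagging of the non--simply-laced case is exactly right: with the paper's conventions the linear maps $f_i$ on configurations and $s_i$ on root-coordinates are adjoint under the naive dot product only when $C$ is symmetric, so the identity $f_i(v)\cdot\alpha=v\cdot s_i(\alpha)$ fails literally in types $B,C,F,G$. The remedy you propose---interpret $v$ as a weight and pair against coroots---is precisely what is needed, and indeed is how the paper itself reinterprets the condition in \eqref{gscond2}. Once that bookkeeping is fixed (so that ``$\Delta_+$'' in \eqref{gscond} really indexes positive coroots in the simple-coroot basis), your invariance argument, the backward transfer for necessity, and the appeal to finiteness of $W$ plus Eriksson for termination all go through without difficulty.
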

Here, $\cdot$ is the dot product of $v \in \R^I$ with $\alpha \in
\Z^I$, i.e., $v \cdot \bigl(\sum_i c_i \alpha_i\bigr) = \sum_i c_i
v_i$, for $c_i \in \R$.

\subsection{Relation to the polytope $P$}

Proposition \ref{mainclaim} has the following consequence in terms of
the numbers game with a cutoff.  We consider the embedding
\begin{equation*}
  \iota: X \into \R^I, \quad x \mapsto \iota(x), 
  \quad \iota(x)_i := \langle x, \alpha_i^\vee \rangle.
\end{equation*}
In this language, the condition \eqref{gscond} translates for $x \in
X$ as follows: The configuration $\iota(x)$ is winning if and only if
\begin{equation}\label{gscond2}
\langle x, \alpha^\vee \rangle \geq -1, \forall \alpha \in \Delta_+.
\end{equation}
Then, Proposition \ref{mainclaim} implies
\begin{cor}\label{ngcor}
  If $x, y \in X$ and $\iota(y)$ can be obtained from $\iota(x)$ by
  playing the numbers game with a cutoff, then $x \in \Lambda(P)$ if
  and only if $y \in \Lambda(P)$.
\end{cor}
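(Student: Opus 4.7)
My plan is to derive Corollary \ref{ngcor} directly from Proposition \ref{mainclaim} by induction on the length of the firing sequence taking $\iota(x)$ to $\iota(y)$.

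First, I would observe that since $x \in X$, the amplitudes $\iota(x)_i = \langle x, \alpha_i^\vee \rangle$ are integers, and since firing preserves integrality (as $c_{ij} \in \Z$ for the standard Cartan matrix), every intermediate configuration in the game has integer amplitudes. The cutoff constraint requires each amplitude to be $\geq -1$, and a firing move at vertex $i$ requires the amplitude there to be strictly negative; hence at the moment vertex $i$ is fired, its amplitude must be exactly $-1$. A direct computation using $f_i(v)_j = v_j - c_{ij} v_i$ shows that firing vertex $i$ at a configuration $\iota(x')$ with $\iota(x')_i = -1$ yields $\iota(x' + \alpha_i)$. Consequently, a sequence of firings taking $\iota(x)$ to $\iota(y)$ corresponds to a chain $x = x^{(0)}, x^{(1)}, \ldots, x^{(N)} = y$ in $X$, with each $x^{(k+1)} = x^{(k)} + \alpha_{i_k}$ and $\langle x^{(k)}, \alpha_{i_k}^\vee \rangle = -1$.

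By induction on $N$, it then suffices to establish that $x' \in \Lambda(P) \iff x' + \alpha_i \in \Lambda(P)$ whenever $x' \in X$ satisfies $\langle x', \alpha_i^\vee \rangle = -1$. For the direction $(\Leftarrow)$, set $y' = x' + \alpha_i$; then $\langle y', \alpha_i^\vee \rangle = -1 + 2 = 1$, so Proposition \ref{mainclaim} applied with $y = y'$ and $\beta = \alpha_i$ gives $y' - \alpha_i = x' \in \Lambda(P)$. For $(\Rightarrow)$, note that $-\alpha_i \in \Delta$ and $\langle x', (-\alpha_i)^\vee \rangle = -\langle x', \alpha_i^\vee \rangle = 1$, so Proposition \ref{mainclaim} applied with $y = x'$ and $\beta = -\alpha_i$ yields $x' - (-\alpha_i) = x' + \alpha_i \in \Lambda(P)$.

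I do not anticipate any serious obstacle; the proof is essentially a dictionary between single firing moves and the addition of simple roots, combined with two symmetric applications of Proposition \ref{mainclaim}. The only subtlety worth recording is that the cutoff constraint forces the amplitude at each fired vertex to equal $-1$ exactly (rather than merely be negative), which is precisely what makes the $(\Rightarrow)$ direction accessible via the negative root $-\alpha_i$.
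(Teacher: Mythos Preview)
Your proposal is correct and follows essentially the same argument as the paper's proof: both reduce to the single-step equivalence $u \in \Lambda(P) \iff u+\alpha_i \in \Lambda(P)$ when $\langle u,\alpha_i^\vee\rangle=-1$, and both obtain each direction from Proposition~\ref{mainclaim}. You are slightly more explicit about why the fired amplitude must equal exactly $-1$ (integrality plus the cutoff) and about how the firing map translates to $u\mapsto u+\alpha_i$, but the substance is identical.
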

\begin{proof}
  Suppose that $u \in X$ and $\iota(u) \in \Z^I$ is obtained along the
  way from $\iota(x)$ to $\iota(y)$. From $u$, any move in the numbers
  game with a cutoff is of the form $u \mapsto u + \iota(\alpha_i)$
  for some $i \in I$ such that $u_i = -1$. Hence, $\langle u,
  \alpha_i^\vee \rangle = -1$ and $(u + \iota(\alpha_i))_i = \langle u
  + \alpha_i, \alpha_i^\vee \rangle = 1$.  We therefore conclude from
  Proposition \ref{mainclaim} that $u \in \Lambda(P)$ if and only if
  $u + \alpha_i \in \Lambda(P)$. The corollary follows.
\end{proof}
Note that the choice of simple roots was arbitrary, so the corollary
in fact holds for any choice of simple roots (equivalently, any choice
of dominant chamber).

\begin{rem}
The corollary extends to the case where $y$ is
obtained from $x$ in the usual numbers game by firing vertices only of
amplitude $-1$, i.e., we can continue the numbers game even if there
is an amplitude less than $-1$, as long as we never fire such
vertices. (This seems to be a reasonable variation on the numbers game
with a cutoff.)
\end{rem}

\section{Proof of Theorem \ref{main2s}}\label{s:main2s-proof}

It is convenient to abuse notation slightly, by omitting the map $\iota$:
\begin{ntn} If $x \in X$ and $\iota(x)$ is winning, we say also that
  $x$ is winning. Moreover, if $x, y \in X$ and $\iota(y)$ is obtained
  from $\iota(x)$ by playing the numbers game (with or without a
  cutoff), we also say that $y$ is obtained from $x$ by playing the
  numbers game (with or without a cutoff, respectively).
\end{ntn}

Fix once and for all a dominant chamber $\sigma$, and write $D$,
$\prec$, $\preceq$, and $\mu$, instead of $D_{\sigma}$,
$\stackrel{\sigma}{\prec}$, $\stackrel{\sigma}{\preceq}$, and
$\mu_{\sigma}$, respectively. We omit $\sigma$ from now on, and by a
dominant element we always mean an element of $D$.

Next, given special ample polytopes $P_1, \ldots, P_m$, we let $\mu_1,
\ldots, \mu_m$ denote the vertices $\mu_1, \ldots, \mu_m \in D$ of
each corresponding to the dominant chamber.

Finally, we recall the notion of \emph{length} of roots.  For
simply-laced root systems (i.e., types $A_n$, $D_n$, and $E_n$, since
we only consider the Dynkin case), we say that all roots have the same
length. For the other root systems, the set of roots $\Delta$ is
partitioned into the subsets of \emph{short} and \emph{long} roots,
and we say that the long roots are \emph{longer} than the short roots.
One way to define the partition (which will be useful to us) is that,
if $\beta \in \Delta$ is at least as long as $\alpha \in \Delta$ and
$\alpha \neq \pm \beta$, then $\langle \alpha, \beta^\vee \rangle \in
\{-1, 0, 1\}$. Recall also that the partition is preserved by the Weyl
group action.  We emphasize that, for us, $\langle \alpha, \alpha^\vee
\rangle = 2$ for all $\alpha \in \Delta$, long or short; the
terminology of length comes from the norm under the symmetrized Cartan
form, which we will not use.

\subsection{Outline of the proof}
First, Theorem \ref{main2s}.(i) follows in exactly the same manner as
Theorem \ref{main}. We give a short proof in the spirit of this
section, based on Proposition \ref{mainclaim}, in \S
\ref{ss:main2smgen} below.

Our strategy underlying the proof of Theorem \ref{main2s}.(ii) is to
perform induction on the sum $x_1 + \cdots + x_m$, which we can assume
is winning (in fact we could assume it is dominant using the action of
the Weyl group).  The induction will be over a certain partial order
on the sum polytope $P = P_1 + \cdots + P_m$.

The proof is broken into three parts: first we prove results about the
partial order on the winning locus of $P$, which boil down to a
strengthening of the lemma of Stembridge mentioned earlier.  Second,
we prove the theorem in the case $m=2$.  Third, we inductively deduce
the theorem for general $m$. In what follows, we will explain the
proof modulo some lemmas whose proofs will be provided in \S
\ref{s:lemmaproofs}.

\subsection{Partial ordering on the winning locus of $P$}
Let $P$ be a special ample polytope.
\begin{defn}
  Suppose $x \in \Lambda(P)$.  If $x \neq \mu$, then a simple root
  $\alpha$ is \emph{$P$-progressive} for $x$ if either $x$ is dominant
  and $\alpha$ has minimum length such that $x+\alpha \preceq \mu$, or
  else $\langle x, \alpha^\vee \rangle \leq -1$.
\end{defn}
It is immediate that, for all $x \neq \mu$, there exists a simple root
which is $P$-progressive for $x$.

This subsection is devoted to the proof of
\begin{prop} \label{p:powin} If $\alpha$ is $P$-progressive for $x$,
  then $x +\alpha \in \Lambda(P)$. Moreover, if $x$ is winning, then
  so is $x+\alpha$.
\end{prop}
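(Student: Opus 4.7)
The plan is to split the proof into the two clauses of the $P$-progressive definition.

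\textbf{Case B} ($\langle x, \alpha^\vee \rangle \leq -1$): Apply Proposition \ref{mainclaim} with $y = x$ and $\beta = -\alpha \in \Delta$: since $\langle y, \beta^\vee \rangle = -\langle x, \alpha^\vee \rangle \geq 1$, we get $x + \alpha = y - \beta \in \Lambda(P)$. For the winning claim, if $x$ is winning then \eqref{gscond2} combined with the hypothesis forces $\langle x, \alpha^\vee \rangle = -1$ by integrality. Under $\iota$, the transition $x \mapsto x + \alpha$ is then precisely a firing of vertex $\alpha$ in the numbers game with a cutoff, so Proposition \ref{gsprop} ensures that the resulting configuration is again winning.

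\textbf{Case A} ($x$ dominant, $\alpha$ of minimum length with $x + \alpha \preceq \mu$) is more delicate. Write $\mu - x = \sum_i h_i \alpha_i$ with $h_i \in \Z_{\geq 0}$; the condition $x + \alpha_j \preceq \mu$ is equivalent to $h_j \geq 1$, so the minimum-length hypothesis reads $|\alpha| = \min\{|\alpha_k| : h_k \geq 1\}$. For winning, every positive root $\alpha'$ satisfies $\langle x + \alpha, \alpha'^\vee \rangle \geq \langle \alpha, \alpha'^\vee \rangle$ by dominance of $x$, and by the length property recalled in \S\ref{s:main2s-proof} one has $\langle \alpha, \alpha'^\vee \rangle \geq -1$ unless $\alpha$ is strictly longer than $\alpha'$. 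In that regime the minimum-length hypothesis forces $h_k = 0$ for every strictly shorter simple root $\alpha_k$, and the expansion $\langle x, \alpha'^\vee \rangle = \langle \mu, \alpha'^\vee \rangle - \sum_k h_k \langle \alpha_k, \alpha'^\vee \rangle$ then shows that $\langle x, \alpha'^\vee \rangle$ is large enough to absorb the potential deficit of $-2$ (or $-3$ in type $G_2$) from $\langle \alpha, \alpha'^\vee \rangle$.

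For polytope membership in Case A we must verify $x + \alpha \in C^*_\tau$ for every chamber $\tau$; the case $\tau = \sigma$ is the hypothesis. For other $\tau$ we imitate the structure of the proof of Proposition \ref{mainclaim} by splitting on the sign of $\alpha$ with respect to $\tau$, but the driving inequality $\langle x, \alpha^\vee \rangle \geq 1$ of that proof is unavailable (we have only $\geq 0$ by dominance). The substitute will be the forthcoming Lemma \ref{m2lem1}, which strengthens Stembridge's covering lemma for the dominant weight poset: it should either produce an auxiliary $y \in \Lambda(P)$ and a positive root $\beta$ with $y - \beta = x + \alpha$ and $\langle y, \beta^\vee \rangle \geq 1$, reducing us to Proposition \ref{mainclaim}, or directly supply the required facet inequalities via the min-length bookkeeping. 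This polytope-membership step in Case A is the main obstacle in the proof; the winning part of Case A and both parts of Case B are routine once the numbers-game language is in place.
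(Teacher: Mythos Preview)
Your Case B is fine and matches the paper. The gap is in Case A, and it is twofold.

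First, your ``direct'' winning argument is only an assertion: you write that the expansion $\langle x,\alpha'^\vee\rangle=\langle\mu,\alpha'^\vee\rangle-\sum_k h_k\langle\alpha_k,\alpha'^\vee\rangle$ ``then shows that $\langle x,\alpha'^\vee\rangle$ is large enough,'' but you do not prove this. The terms $h_k\langle\alpha_k,\alpha'^\vee\rangle$ with $\alpha_k$ long and $\alpha'$ short can individually be large and positive, so a priori there is no reason the sum is bounded by $\langle\mu,\alpha'^\vee\rangle-1$; any such bound must come from the constraint that $x$ and $\mu$ are \emph{both} dominant, and you have not used the dominance of $\mu$ at all. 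One can try to push this through by a case analysis on root systems, but it is not routine.

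Second, and more seriously, your plan for polytope membership in Case A is off-target. You propose to redo the facet-by-facet argument of Proposition~\ref{mainclaim}, or to extract from Lemma~\ref{m2lem1} ``an auxiliary $y$ and a positive root $\beta$ with $y-\beta=x+\alpha$ and $\langle y,\beta^\vee\rangle\geq 1$.'' Neither is what the lemma gives, and neither is how the paper proceeds. The missing idea is to combine Lemma~\ref{m2lem1} with Corollary~\ref{ngcor}. Lemma~\ref{m2lem1} (applied with $y=\mu$, after checking that the simple-root minimum-length hypothesis propagates to all positive roots) says that $x+\alpha$ is winning and that the terminal configuration $z$ of the numbers game with a cutoff satisfies $x+\alpha\preceq z\preceq\mu$ with $z$ dominant. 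Then $z\in\Lambda(P)$ by \eqref{e:ltwo-int}, and Corollary~\ref{ngcor} transports this back along the numbers-game path to give $x+\alpha\in\Lambda(P)$. No chamber-by-chamber analysis is needed, and the winning statement comes along for free. Your proposal never invokes Corollary~\ref{ngcor}, which is precisely the bridge between the numbers game and membership in $\Lambda(P)$.
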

\begin{proof}
  First, suppose that $\alpha$ is a simple
  root such that $\langle x, \alpha^\vee \rangle \leq-1$. By
  Proposition \ref{mainclaim}, $x+\alpha \in \Lambda(P)$.  If $x$ is
  winning, then $x+\alpha$ is obtained from $x$ by a move of the
  numbers game, and hence it is winning.

  If $x$ is dominant and $\alpha$ is a simple root of minimum length
  such that $x+\alpha \preceq \mu$, the result follows from the case
  $y=\mu$ of the Lemma \ref{m2lem1} below.  Namely, by Corollary
  \ref{ngcor}, to show that $x+\alpha \in \Lambda(P)$, it suffices to
  show that $z \in \Lambda(P)$, where $z$ is the result of playing the
  numbers game with a cutoff beginning with $x+\alpha$.  Next, if
  $\beta \in \Delta_+$ is any positive root such that $x+\beta \preceq
  \mu$, then $\beta$ must be at least as long as $\alpha$; otherwise
  $\beta$ would be short and $\alpha$ long, and there would exist a
  short simple root $\gamma$ such that $\gamma \preceq \beta$.  In the
  latter case, $x + \gamma \preceq x+\beta \preceq \mu$, contradicting
  our assumption that $\alpha$ has minimum length such that $x+\alpha
  \preceq \mu$.  Therefore, we may apply Lemma \ref{m2lem1} with $y =
  \mu$.  We conclude that $x+\alpha$ is winning, i.e., $z$ is
  dominant, and also $z \preceq \mu$.  Then, $z \in \Lambda(P)$ by
  \eqref{e:ltwo-int}.
\end{proof}
\begin{lemma}\label{m2lem1}
  Suppose $x \prec y$ and $x, y \in X \cap D$.  Let $\alpha \in
  \Delta_+$ be a positive root of minimum length such that $x + \alpha
  \preceq y$. Then, $x+\alpha$ is winning, and if $z$ is the result of
  playing the numbers game with a cutoff, then $x+\alpha \preceq z
  \preceq y$.
\end{lemma}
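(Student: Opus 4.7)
My plan is to prove the lemma in three steps: reducing to the case where $\alpha$ is a simple root, then showing $x+\alpha$ is winning, and finally tracking the endpoint of the numbers game.

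For the reduction, since $y - x \succ 0$ is a nonzero nonnegative integer combination of simple roots, some simple root $\alpha_i$ satisfies $\alpha_i \preceq y - x$; in simply-laced types all positive roots share a common length, so such an $\alpha_i$ realizes the minimum. In the non-simply-laced types $B_n$, $C_n$, $F_4$, $G_2$, a direct inspection shows that every short positive root's expansion in simple roots contains at least one short simple root (e.g.\ in $B_n$, $e_i = \alpha_i + \cdots + \alpha_n$ involves the short simple root $\alpha_n$). Hence whenever a short positive root lies $\preceq y-x$, so does a short simple root, and the minimum length is always attained by a simple root. I take $\alpha$ simple hereafter.

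To show $x+\alpha$ is winning, I verify $\iota(x+\alpha) \cdot \beta \geq -1$ for every $\beta \in \Delta_+$. Dominance of $x$ gives $\iota(x) \cdot \beta \geq 0$, so the challenge is to absorb the possibly negative $\iota(\alpha) \cdot \beta = \sum_i c_i c_{ji}$ (writing $\alpha = \alpha_j$ and $\beta = \sum_i c_i \alpha_i$), most problematically from short simple roots $\alpha_k$ adjacent to $\alpha_j$, where $c_{jk} \in \{-2,-3\}$. The key point is that when $\alpha$ is a long simple root of minimum length, no short simple root lies $\preceq y-x$, so $d_k = 0$ for each short simple $\alpha_k$ in the expansion $y-x = \sum_l d_l \alpha_l$; combined with dominance of $y$, this forces $\iota(x)_k \geq 2 d_j \geq 2$ (respectively $\geq 3 d_j$ in $G_2$) for every short simple $\alpha_k$ adjacent to $\alpha_j$. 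These lower bounds compensate the negative Cartan contributions and yield the winning condition; verifying the full compensation requires a type-by-type check for $B_n$, $C_n$, $F_4$, $G_2$.

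Finally, each move of the numbers game with cutoff is $u \mapsto u + \alpha_i$ with $\langle u, \alpha_i^\vee \rangle = -1$, so $x+\alpha \preceq z$ follows immediately. For $u + \alpha_i \preceq y$, I write $y - u = \sum_j d_j \alpha_j$ with $d_j \in \Z_{\geq 0}$. Dominance of $y$ gives $\langle y - u, \alpha_i^\vee \rangle \geq 1$, i.e., $2d_i + \sum_{j \neq i} d_j c_{ji} \geq 1$; since $c_{ji} \leq 0$, this forces $2d_i \geq 1$, hence $d_i \geq 1$ by integrality. Therefore $u + \alpha_i \preceq y$, and by induction on the number of moves, $z \preceq y$. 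The main obstacle will be the second step: verifying case-by-case that the dominance-induced lower bounds on $\iota(x)$ suffice to compensate the negative Cartan contributions from $\iota(\alpha) \cdot \beta$ for every positive root $\beta$ in each non-simply-laced type.
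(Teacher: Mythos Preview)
Your step 3 argument for $x+\alpha \preceq z \preceq y$ is correct and in fact makes explicit what the paper leaves somewhat elliptical. The problems lie in steps 1 and 2.

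\textbf{The reduction to simple $\alpha$ does not prove the lemma.} The lemma asserts its conclusion for \emph{every} positive root $\alpha$ of minimum length with $x+\alpha \preceq y$, not merely for one such root. Your step 1 shows (correctly) that a simple root always achieves the minimum length, but this does not establish that $x+\alpha$ is winning when $\alpha$ is a non-simple root of that same length. For instance in type $A_2$ with $y-x=\alpha_1+\alpha_2$, the lemma applies to $\alpha=\alpha_1+\alpha_2$ as well as to $\alpha_1$ and $\alpha_2$; your argument treats only the latter two. The general case is used in Remark \ref{strem} to recover Stembridge's covering lemma, so it cannot simply be dropped.

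\textbf{Step 2 is left incomplete.} Even restricting to simple $\alpha$, you do not carry out the promised type-by-type verification; you flag it yourself as ``the main obstacle.'' (You also do not explicitly dispatch the simply-laced case, though there it is easy: when the Cartan matrix is symmetric, $\iota(\alpha_j)\cdot\beta = \langle \beta,\alpha_j^\vee\rangle \in \{-1,0,1,2\}$ for $\beta\in\Delta_+$.)

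The paper avoids both issues with a single type-free observation (Claim \ref{baswincl}): if $x$ is dominant and $\alpha\in\Delta_+$, and the usual numbers game on $x+\alpha$ never fires a vertex whose simple root is strictly shorter than $\alpha$, then $x+\alpha$ is winning. The proof is a two-line induction: firing a vertex $i$ whose simple root $\beta$ is at least as long as $\alpha$ forces $\langle \alpha,\beta^\vee\rangle=-1$ (by the length condition and dominance of $x$), so the amplitude is exactly $-1$ and the new configuration is $x+(\alpha+\beta)$ with $\alpha+\beta$ a positive root of the same length as $\alpha$. This handles arbitrary (not just simple) $\alpha$ at once. The minimum-length hypothesis is then used only to rule out firing a shorter simple root $\gamma$: if such a fire occurred, your own step-3 computation (applied at the configuration just before firing $\gamma$) would force $x+\gamma \preceq y$, contradicting minimality of the length of $\alpha$.
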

The lemma will be proved in \S \ref{ss:m2lem1-proof}.
\begin{rem} \label{strem} Lemma \ref{m2lem1} strengthens the
    aforementioned result of Stembridge.  Specifically, if $y$ covers
    $x$, then $y=z$, i.e., $y$ is obtainable from $x+\alpha$ by
    playing the numbers game with a cutoff.  In this case, $y = x +
    \beta$, where $\beta \in \Delta_+$ is obtained from $\alpha$ by
    playing the numbers game (using the same firing sequence as
    for $x+\alpha \mapsto x+\beta$, which involves firing only
    vertices of amplitude $-1$).  This was our motivation for
    replacing $\mu$ by $y$ in the statement of the lemma.
\end{rem}

\subsection{The case $m = 2$ of Theorem
  \ref{main2s}.(ii)} \label{ss:main2smeq2} The heart of the proof of
Theorem \ref{main2s}.(ii) is contained in the case $m=2$. Then,
general $m$ will be a straightforward generalization.  In turn, the
case $m=2$ is based on the following lemma.
\begin{lemma} \label{m2lem2} Let $P_1$ and $P_2$ be special ample
  polytopes, $(x_1, x_2) \in \Lambda(P_1) \times \Lambda(P_2)$, $P =
  P_1 + P_2$, and $x = x_1 + x_2 \in \Lambda(P)$.  If $\alpha$ is
  $P$-progressive for $x$, then there exists $(x_1', x_2')$ and an
  index $i\in\{1,2\}$ such that $(x_1, x_2) \sim (x_1', x_2')$ and
  $\alpha$ is $P_i$-progressive for $x_i'$.
\end{lemma}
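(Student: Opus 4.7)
The plan is to split into two cases matching the two possibilities in the definition of $P$-progressive for $x$.

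\textbf{Case 1:} $\langle x, \alpha^\vee \rangle \leq -1$. Since $\langle x, \alpha^\vee \rangle = \langle x_1, \alpha^\vee \rangle + \langle x_2, \alpha^\vee \rangle$ and both summands are integers, at least one, say $\langle x_1, \alpha^\vee \rangle$, must be $\leq -1$. Then $\alpha$ is $P_1$-progressive for $x_1$ by the second clause of the definition, and it suffices to take $(x_1', x_2') := (x_1, x_2)$ with no root moves.

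\textbf{Case 2:} $x$ is dominant and $\alpha$ has minimum length among simple roots $\gamma$ with $x + \gamma \preceq \mu$. Here the strategy is to use root moves to reach an equivalent pair $(x_1', x_2')$ in which both components are dominant. The key mechanism is that if some component, say $x_1$, has $\langle x_1, \alpha_j^\vee \rangle \leq -1$ for a simple root $\alpha_j$, then dominance of $x$ forces $\langle x_2, \alpha_j^\vee \rangle \geq 1$, and Proposition \ref{mainclaim} (applied separately to $P_1$ and $P_2$) guarantees both $x_1 + \alpha_j \in \Lambda(P_1)$ and $x_2 - \alpha_j \in \Lambda(P_2)$; hence the root move $(x_1, x_2) \mapsto (x_1 + \alpha_j, x_2 - \alpha_j)$ is valid. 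Iterating such moves (and their symmetric versions firing $x_2$), and invoking finiteness of $\Lambda(P_1) \times \Lambda(P_2)$ together with the strong convergence of the numbers game from Section \ref{s:numbers-game}, one aims to show the procedure terminates at a pair with both components dominant.

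Once $(x_1', x_2')$ is such a dominant pair, \eqref{e:ltwo-int} yields $\mu_i - x_i' \in \N \Delta_+$ for both $i$, so $\mu - x = (\mu_1 - x_1') + (\mu_2 - x_2')$ is a sum of two elements of $\N \Delta_+$. Since $x + \alpha \preceq \mu$, the $\alpha$-coefficient of $\mu - x$ is at least one, hence the $\alpha$-coefficient of one of $\mu_i - x_i'$, say $\mu_1 - x_1'$, is at least one, giving $x_1' + \alpha \preceq \mu_1$. The minimum-length condition follows by contradiction: any shorter simple root $\gamma$ with $x_1' + \gamma \preceq \mu_1$ would yield $x + \gamma = x_1' + \gamma + x_2' \preceq \mu_1 + \mu_2 = \mu$ (using $x_2' \preceq \mu_2$), contradicting the minimality of $\alpha$ with respect to $P$. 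Thus $\alpha$ is $P_1$-progressive for $x_1'$.

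The main obstacle is the termination argument in Case 2: showing that any starting pair can be moved to one with both components dominant. A naive alternating strategy can destroy dominance in one component while restoring it in the other, so the argument will require either a carefully chosen monovariant on $\Lambda(P_1) \times \Lambda(P_2)$ or a delicate invocation of numbers-game convergence; the structural fact that dominance of $x = x_1 + x_2$ guarantees enough ``room'' in one component to accommodate a non-dominant move in the other is the reason such an argument should succeed.
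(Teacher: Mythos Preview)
Your Case 1 is correct and matches the paper. The problem is Case 2: the target state you aim for---a pair $(x_1',x_2')$ with \emph{both} components dominant---need not exist, so the termination argument you flag as the ``main obstacle'' cannot succeed in general. A minimal counterexample is $A_1$ with $P_1=P_2$ the special ample polytope having dominant vertex $\omega$ (the fundamental weight); then $\Lambda(P_i)=\{\omega,-\omega\}$, and for $x=0\in\Lambda(P_1+P_2)$ the only decompositions are $(\omega,-\omega)$ and $(-\omega,\omega)$, neither of which has both entries dominant. The lemma still holds here (in either pair one component has $\langle x_i,\alpha^\vee\rangle=-1$, so $\alpha$ is $P_i$-progressive for it), but your mechanism cannot reach the conclusion.

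The paper avoids this by never asking for both components to be dominant simultaneously. It first fires $x_1$ (using the same simple-root moves you describe) until $x_1$ is dominant; this terminates because it is the ordinary numbers game on $x_1$ alone. If at that point $x_1'+\alpha\preceq\mu_1$, the minimum-length argument you gave finishes (this is the paper's Claim \ref{m2lem2cl}). Otherwise $x_1'$ lies on the facet $P_1^\alpha$ (meaning $x_1'+\alpha\not\preceq\mu_1$), and the paper then fires $x_2$ at simple roots $\beta\neq\alpha$ only, moving weight back into $x_1$; this may destroy dominance of $x_1$ but preserves $x_1\in P_1^\alpha$, and it terminates because it is the numbers game on $x_2$ for the sub--root system generated by $\{\alpha_j:j\neq\alpha\}$. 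After this, $x_2$ is dominant except possibly at $\alpha$. If $\langle x_2,\alpha^\vee\rangle\leq -1$ then $\alpha$ is $P_2$-progressive; if $x_2$ is dominant, then either $\alpha$ is $P_2$-progressive or $x_2\in P_2^\alpha$, and the latter contradicts $x+\alpha\preceq\mu$ since $P_1^\alpha+P_2^\alpha=P^\alpha$. The key idea you are missing is this asymmetric two-phase reduction that tracks the facet condition $x_1\in P_1^\alpha$ rather than dominance of $x_1$ during the second phase.
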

This will be proved in \S \ref{ss:m2lem2-proof} below. Here, we explain
how it implies Theorem \ref{main2s}.(ii) in the case $m=2$.
\begin{proof}[Proof of Theorem \ref{main2s}.(ii) for $m=2$] As remarked
  earlier, it is enough to prove the theorem in the case that $x$ is
  winning. Let $\mu = \mu_1 + \mu_2$ (where by convention $\mu_i$ is
  the vertex of $P_i$ corresponding to the dominant chamber).  The
  theorem is immediate in the case that $x = \mu$, since then $x = x_1
  + x_2$ implies that $x_1 = \mu_1$ and $x_2 = \mu_2$ (and
  vice-versa).  Inductively, suppose that $x \in \Lambda(P)$ is
  winning, and for some $P$-progressive $\alpha$, the theorem holds
  for $x+\alpha$.


  Suppose that $(x_1, x_2), (x_1', x_2') \in \Lambda(P_1) \times
  \Lambda(P_2)$ are pairs such that $x_1 + x_2 = x = x_1'+x_2'$.  Let
  $\alpha$ be $P$-progressive for $x$. By Lemma \ref{m2lem2} (applied
  to both $(x_1, x_2)$ and $(x_1', x_2')$ separately), it is enough to
  assume that there exist indices $i$ and $j$ such that $\alpha$ is
  $P_i$-progressive for $x_i$ and $P_j$-progressive for $x_j'$.
  Without loss of generality, suppose that $i = 1$.  Let $(y_1, y_2)$
  and $(y_1', y_2')$ be given by $y_1 = x_1+\alpha$, $y_2 = x_2$,
  $y_j' = x_j' + \alpha$, and $y_k' = x_k'$ for $k \neq j$.  Since
  $x_1+x_2+\alpha = x + \alpha = x_1' + x_2'+\alpha$, by hypothesis,
  $(y_1, y_2) \sim (y_1', y_2')$.  By induction on the number of root
  moves \eqref{eq:equivmove} required to realize the latter
  equivalence, Lemma \ref{m2lem3} below then implies that either
  $(x_1,x_2) \sim (y_1'-\alpha, y_2') \in \Lambda(P_1) \times
  \Lambda(P_2)$ or $(x_1, x_2) \sim (y_1', y_2' - \alpha) \in
  \Lambda(P_1) \times \Lambda(P_2)$ (where, for the purposes of
  induction, we drop the assumption that $\alpha$ is $P_1$-progressive
  for $x_1$ and assume only that $x_1+\alpha \in \Lambda(P_1)$, and
  similarly for $x_j'$ and $P_j$). If the result is $(x_1', x_2')$, we
  are done. If not, the result must be $(x_1' \pm \alpha, x_2' \mp
  \alpha)$, which is related to $(x_1', x_2')$ by a single root move.
\end{proof}
Above we needed the following lemma, whose proof will be given in \S
\ref{ss:m2lem3-proof}.
\begin{lemma} \label{m2lem3} Suppose that $(x_1, x_2) \in \Lambda(P_1)
  \times \Lambda(P_2)$, and $\alpha$ is a simple root such that
  $x = x_1+x_2$ satisfies 
  $\langle x, \alpha^\vee \rangle \geq -1$, 
  and such that $x_1 + \alpha \in \Lambda(P_1)$.  If $\beta \in
  \Delta$ is such that $(x_1+\alpha + \beta, x_2-\beta) \in
  \Lambda(P_1) \times \Lambda(P_2)$, then either $(x_1 + \beta, x_2 -
  \beta) \in \Lambda(P_1) \times \Lambda(P_2)$ or $(x_1 + \alpha +
  \beta, x_2 - \alpha - \beta) \in \Lambda(P_1) \times \Lambda(P_2)$.
  In the latter case, either $\alpha + \beta \in \Delta$, or $(x_1 +
  \alpha, x_2 - \alpha) \in \Lambda(P_1) \times \Lambda(P_2)$.
\end{lemma}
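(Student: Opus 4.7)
The plan is to apply Proposition \ref{mainclaim} to each of the two elements $y_1 := x_1 + \alpha + \beta \in \Lambda(P_1)$ and $y_2 := x_2 - \beta \in \Lambda(P_2)$, in both cases using $\alpha$ (a simple root, hence in $\Delta$) as the root to be subtracted. This yields two implications: if $\langle y_1, \alpha^\vee \rangle \geq 1$, then $y_1 - \alpha = x_1 + \beta \in \Lambda(P_1)$, producing the first alternative (a); and if $\langle y_2, \alpha^\vee \rangle \geq 1$, then $y_2 - \alpha = x_2 - \alpha - \beta \in \Lambda(P_2)$, producing the second alternative (b).

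To see that one of the two alternatives holds, compute
\[
\langle y_1, \alpha^\vee \rangle + \langle y_2, \alpha^\vee \rangle = \langle x + \alpha, \alpha^\vee \rangle = \langle x, \alpha^\vee \rangle + 2 \geq 1
\]
by the hypothesis $\langle x, \alpha^\vee \rangle \geq -1$. Since both pairings are integers, at least one must be $\geq 1$, so one of (a), (b) is established.

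To obtain the sub-claim in the latter case (b), assume $\langle y_2, \alpha^\vee \rangle \geq 1$, which rewrites as $\langle x_2, \alpha^\vee \rangle \geq 1 + \langle \beta, \alpha^\vee \rangle$. If $\langle \beta, \alpha^\vee \rangle \geq 0$, then $\langle x_2, \alpha^\vee \rangle \geq 1$ and Proposition \ref{mainclaim} gives $x_2 - \alpha \in \Lambda(P_2)$; combined with $x_1 + \alpha \in \Lambda(P_1)$, this is the second sub-alternative. If $\langle \beta, \alpha^\vee \rangle < 0$ and $\beta \neq -\alpha$, then $\alpha + \beta \neq 0$, and since $\langle \alpha, \beta^\vee \rangle$ and $\langle \beta, \alpha^\vee \rangle$ have the same sign (both proportional to $(\alpha, \beta)$), the standard fact on root strings gives $\alpha + \beta \in \Delta$, the first sub-alternative.

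The main obstacle, and the delicate case, is $\beta = -\alpha$. Here $\alpha + \beta = 0 \notin \Delta$, alternative (b) collapses to the trivial hypothesis $(x_1, x_2) \in \Lambda(P_1) \times \Lambda(P_2)$, and the lemma reduces to showing either $x_1 - \alpha \in \Lambda(P_1)$ (alternative (a)) or $x_2 - \alpha \in \Lambda(P_2)$ (the second sub-alternative of (b)); Proposition \ref{mainclaim} alone is not enough because the required pairings may vanish. The approach is to exploit the $W$-invariance of each $\Lambda(P_i)$: the $\alpha$-string $\{x_i + k\alpha : k \in \Z\} \cap \Lambda(P_i)$ is an integer interval $[m_i, l_i]$, and the $s_\alpha$-invariance of $\Lambda(P_i)$ forces $m_i + l_i = -\langle x_i, \alpha^\vee \rangle$. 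The hypotheses $x_i, x_i + \alpha \in \Lambda(P_i)$ give $m_i \leq 0$ and $l_i \geq 1$. If both $m_1, m_2 \geq 0$, then $m_i = 0$ and $l_i = -\langle x_i, \alpha^\vee \rangle \geq 1$, so $\langle x_1 + x_2, \alpha^\vee \rangle \leq -2$, contradicting $\langle x, \alpha^\vee \rangle \geq -1$. Hence $m_i \leq -1$ for some $i \in \{1,2\}$, yielding the desired inclusion.
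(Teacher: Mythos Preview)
Your argument for $\beta \neq -\alpha$ is correct and essentially identical to the paper's: both apply Proposition~\ref{mainclaim} to $y_1 = x_1+\alpha+\beta$ and $y_2 = x_2-\beta$ to obtain the dichotomy, then deduce $\alpha+\beta \in \Delta$ from $\langle \beta,\alpha^\vee\rangle \leq -1$ in the remaining sub-case. Your split on the sign of $\langle \beta,\alpha^\vee\rangle$ is only a cosmetic variant of the paper's split on whether $x_2-\alpha \in \Lambda(P_2)$.

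You are right to flag $\beta = -\alpha$ as delicate; the paper's own proof tacitly excludes it at the final step (``$\alpha+\beta$ must be a root'' fails when $\alpha+\beta=0$). However, your repair is incorrect: $\Lambda(P_i)$ is \emph{not} $s_\alpha$-invariant in general, because the vertices $\{\mu_\sigma\}$ of a special ample polytope need not form a $W$-orbit. For instance, in type $A_1$ the interval $P_1 = [0,\alpha]$ is special ample (both endpoints are dominant for their respective chambers, and $r=1>0$), yet $s_\alpha(\alpha)=-\alpha \notin P_1$. So your $\alpha$-string symmetry $m_i+l_i = -\langle x_i,\alpha^\vee\rangle$ is unjustified.

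Worse, the lemma as literally stated is \emph{false} when $\beta=-\alpha$. In type $A_1$, take $P_1=[0,\alpha]$ (so $\Lambda(P_1)=\{0,\alpha\}$), $P_2=[-\omega,\omega]$ (so $\Lambda(P_2)=\{-\omega,\omega\}$), $x_1=0$, $x_2=-\omega$. All hypotheses hold: $\langle x,\alpha^\vee\rangle=-1$, $x_1+\alpha=\alpha\in\Lambda(P_1)$, and $(x_1+\alpha+\beta,\,x_2-\beta)=(0,\omega)\in\Lambda(P_1)\times\Lambda(P_2)$. But alternative (a) requires $x_1-\alpha=-\alpha\in\Lambda(P_1)$, which fails; alternative (b) reduces to $(x_1,x_2)$, which holds trivially; yet in this ``latter case'' neither sub-alternative holds, since $\alpha+\beta=0\notin\Delta$ and $x_2-\alpha=-3\omega\notin\Lambda(P_2)$. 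So no argument can close this case.

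The gap is harmless for the application. In the proof of Theorem~\ref{main2s}.(ii), when $\beta=-\alpha$ the root move under consideration is $(x_1+\alpha,x_2)\mapsto(x_1,x_2+\alpha)$, and one of the two conclusions sought there is $(x_1,x_2)\sim(y_1',y_2'-\alpha)=(x_1,x_2)$, which is vacuous. The lemma should simply carry the side hypothesis $\beta\neq-\alpha$; both your proof and the paper's are then complete.
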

\subsection{Proof of Theorem \ref{main2s} for general
  $m$}\label{ss:main2smgen}
Let $\mu = \mu_1 +
  \cdots + \mu_m$, where $\mu_i$ is the vertex of $P_i$ corresponding
  to the dominant chamber. The theorem is immediate in the case
  $x=\mu$.  It is enough to prove the theorem when $x$ is winning,
  under the inductive hypothesis that the theorem holds for $x +
  \alpha$ where $\alpha$ is $P$-progressive for $x$.

  To prove part (i), suppose that $(y_1, \ldots, y_m) \in \Lambda(P_1)
  \times \cdots \times \Lambda(P_m)$ with $y_1 + \cdots + y_m =
  x+\alpha$. Then for some index $i$, $\langle y_i, \alpha^\vee \rangle
  \geq 1$, and by Proposition \ref{mainclaim}, $(y_1, \ldots, y_{i-1},
  y_i-\alpha, y_{i+1}, \ldots, y_m) \in \Lambda(P_1) \times \cdots
  \times \Lambda(P_m)$, with the desired sum $x$.

  For part (ii), we will additionally induct on $m$, i.e., we assume
  that the theorem holds for smaller values of $m$.  Let $Q := P_1 +
  \cdots + P_{m-1}$, so that $P = Q + P_m$. Let $y = x_1 + \cdots +
  x_{m-1}$ and $y' = x_1' + \cdots + x_{m-1}'$.  Then, by the previous
  subsection, there exist root moves relating $(y, x_m)$ to $(y',
  x_m')$.  To turn this into root moves relating $(x_1, \ldots, x_m)$
  and $(x_1', \ldots, x_m')$, it is enough to apply the theorem for
  the case $m-1$ (i.e., for $(P_1, \ldots, P_{m-1})$) together with
  the following lemma.
  \begin{lemma} \label{l:main2s-indlem} Suppose that $y \in Q = P_1 +
    \cdots + P_{m-1}$, $\beta \in \Delta$, and $y + \beta \in Q$.
    Assume Theorem \ref{main2s}.(i) holds for $(P_1, \ldots,
    P_{m-1})$. Then, there exists a tuple $(y_1, \ldots, y_{m-1}) \in
    \Lambda(P_1) \times \cdots \times \Lambda(P_{m-1})$ such that $y =
    y_1 + \cdots + y_{m-1}$ and an index $j$ such that $y_j + \beta
    \in \Lambda(P_j)$.
\end{lemma}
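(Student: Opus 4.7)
My plan is to split into two cases based on the sign of $\langle y, \beta^\vee\rangle$, and in each case use Theorem \ref{main2s}.(i) (which is assumed available for $(P_1,\ldots,P_{m-1})$) together with Proposition \ref{mainclaim} applied to an appropriate summand. The key observation is that $\langle y, \beta^\vee\rangle \in \Z$, so at least one of the inequalities $\langle y, \beta^\vee\rangle \geq -1$ or $\langle y, \beta^\vee\rangle \leq -1$ must hold, and in either case the pigeonhole principle on the integer pairings of the pieces against $\beta^\vee$ will yield the index $j$ we want.

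First I would handle the case $\langle y, \beta^\vee \rangle \geq -1$. Here $\langle y+\beta, \beta^\vee\rangle = \langle y, \beta^\vee\rangle + 2 \geq 1$. I would apply Theorem \ref{main2s}.(i) to $y+\beta \in \Lambda(Q)$ to obtain $z_1+\cdots+z_{m-1} = y+\beta$ with $z_i \in \Lambda(P_i)$. Since $\sum_i \langle z_i, \beta^\vee\rangle = \langle y+\beta,\beta^\vee\rangle \geq 1$ and the pairings are integers, some index $j$ must satisfy $\langle z_j, \beta^\vee\rangle \geq 1$. Proposition \ref{mainclaim} then gives $z_j - \beta \in \Lambda(P_j)$. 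Setting $y_j := z_j - \beta$ and $y_i := z_i$ for $i \neq j$ yields a decomposition of $y$ with $y_j + \beta = z_j \in \Lambda(P_j)$.

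Next I would handle the complementary case $\langle y, \beta^\vee\rangle \leq -1$. Here I would apply Theorem \ref{main2s}.(i) directly to $y$, getting $y = w_1 + \cdots + w_{m-1}$ with $w_i \in \Lambda(P_i)$. Since $\sum_i \langle w_i, \beta^\vee\rangle = \langle y, \beta^\vee\rangle \leq -1$, some index $j$ has $\langle w_j, \beta^\vee\rangle \leq -1$, equivalently $\langle w_j, (-\beta)^\vee\rangle \geq 1$. Since $-\beta \in \Delta$, Proposition \ref{mainclaim} applied with the root $-\beta$ yields $w_j - (-\beta) = w_j + \beta \in \Lambda(P_j)$, and I simply set $y_i := w_i$.

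I do not expect a genuine obstacle here: both Theorem \ref{main2s}.(i) for $m-1$ and Proposition \ref{mainclaim} are in hand, and the only subtle point is the sign bookkeeping and the observation that integrality of $\langle w_j, \beta^\vee\rangle$ and $\langle z_j, \beta^\vee\rangle$ turns the average bound into a pointwise bound for some index, which then triggers Proposition \ref{mainclaim}. The main thing to double-check carefully is that the two cases overlap correctly at $\langle y, \beta^\vee\rangle = -1$ and that the conclusion in both cases is the same ``$y_j + \beta \in \Lambda(P_j)$'' (rather than $y_j - \beta$), which the construction above makes transparent.
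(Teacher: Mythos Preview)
Your proof is correct and follows essentially the same approach as the paper: both split into two cases according to the sign of $\langle y,\beta^\vee\rangle$, decompose either $y$ or $y+\beta$ via Theorem \ref{main2s}.(i), pigeonhole on the integer pairings of the summands with $\beta^\vee$, and then invoke Proposition \ref{mainclaim}. The only cosmetic difference is the threshold (the paper splits at $\langle y,\beta^\vee\rangle<0$ versus $\geq 0$, while you split at $\leq -1$ versus $\geq -1$), which is immaterial.
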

The lemma will be proved in \S \ref{ss:l:main2s-indlem-proof}.

\section{Proof of lemmas} \label{s:lemmaproofs}
\subsection{Proof of Lemma \ref{m2lem1}} \label{ss:m2lem1-proof} We
will use the following general result:
\begin{claim}\label{baswincl}
  If $x \in \Z^I$ is dominant and $\alpha \in \Delta_+$ is any
  positive root, and the usual numbers game on $x+\alpha$ does not
  involve firing any vertices corresponding to simple roots shorter
  than $\alpha$, then $x+\alpha$ is winning.
\end{claim}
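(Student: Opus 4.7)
The plan is to exhibit a play of the numbers game with a cutoff starting from $x+\alpha$ that reaches a dominant configuration; by Proposition~\ref{gsprop}, this will suffice to prove that $x+\alpha$ is winning. Let $I' \subseteq I$ denote the subset of simple roots at least as long as $\alpha$, so that $I \setminus I'$ consists of the simple roots strictly shorter than $\alpha$. By the hypothesis together with strong convergence of the usual numbers game, one may play the usual game on $x+\alpha$ firing only vertices in $I'$, ending at a dominant configuration.

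First I would verify that, throughout this play, the amplitudes at vertices $j \in I \setminus I'$ remain nonnegative. When some $i \in I'$ is fired, the change in $v_j$ equals $-c_{ij} v_i \leq 0$, since $c_{ij} \leq 0$ (for $i \neq j$) and $v_i < 0$ (the firing condition); hence $v_j$ is nonincreasing, and since $v_j \geq 0$ in the final dominant state, $v_j \geq 0$ throughout.

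Next, the evolution of the amplitudes at $I'$ under this play is governed by the sub-numbers-game on the Cartan submatrix $(c_{ij})_{i,j \in I'}$, since firings in $I'$ affect other $I'$-amplitudes exactly through this submatrix and no firings occur outside $I'$. The only nontrivial case of the claim is when $\alpha$ is long in a non-simply-laced root system (otherwise $I \setminus I' = \emptyset$ and the hypothesis is vacuous); in that case $I'$ consists of the long simple roots, the sub-Dynkin diagram on $I'$ is simply-laced, and every positive root $\beta$ of the sub-root-system $\Delta' = \Delta \cap \operatorname{span}(I')$ has the same length as $\alpha$. Therefore $\langle \alpha, \beta^\vee \rangle \in \{-1, 0, 1, 2\}$ by standard root-system combinatorics, which combined with $\langle x, \beta^\vee \rangle \geq 0$ from dominance of $x$ yields $\langle x+\alpha, \beta^\vee \rangle \geq -1$. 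Applying Proposition~\ref{gsprop} to the sub-system, the sub-game on $v|_{I'}$ is winning, so amplitudes at vertices in $I'$ remain $\geq -1$ throughout.

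Combining the two bounds, the play keeps all amplitudes $\geq -1$ and terminates at a dominant configuration, hence it wins the cutoff game. The main obstacle is this sub-system reduction: a direct attempt to verify $\langle x+\alpha, \beta^\vee \rangle \geq -1$ for all $\beta \in \Delta_+$ breaks down for $\beta$ strictly shorter than $\alpha$, where $\langle \alpha, \beta^\vee \rangle$ may be $-2$ or $-3$. The hypothesis is precisely what is needed to bypass this: it forces the initial amplitudes at the shorter simple roots to be nonnegative, effectively localizing the essential dynamics to the simply-laced sub-system on $I'$, where the length obstruction disappears.
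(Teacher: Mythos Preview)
Your proof is correct, but it takes a different route from the paper's. The paper argues by a short induction on the number of moves: it maintains the invariant that after each firing the configuration has the form $x+\alpha'$ for some positive root $\alpha'$ of the same length as $\alpha$ (since firing a vertex with simple root $\beta$ of length at least that of $\alpha$, at amplitude $-1$, replaces $\alpha'$ by $\alpha'+\beta=s_\beta(\alpha')$). This invariant immediately forces every fired amplitude to be exactly $-1$, and the hypothesis (via strong convergence) guarantees that the shorter amplitudes never go negative.

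Your approach instead decomposes the problem along the vertex partition $I = I' \sqcup (I \setminus I')$: a monotonicity argument pins the short-root amplitudes at $\geq 0$, and you localize the long-root dynamics to the simply-laced sub-Cartan-matrix on $I'$, where Proposition~\ref{gsprop} applies directly because roots of $\Delta'$ share the length of $\alpha$. This is a clean structural reduction and makes explicit the reason the shorter amplitudes are harmless. The paper's argument, on the other hand, yields the stronger byproduct that every intermediate configuration is literally $x$ plus a single positive root of the given length, which is exactly what is used in the surrounding proof of Lemma~\ref{m2lem1} and in Remark~\ref{strem}. One small point worth tightening: when you pass between $v|_{I'}\cdot\beta'$ (the dot product for the sub-game) and $\langle x+\alpha,\beta^\vee\rangle$ (the root-system pairing), you are implicitly using that $\beta^\vee=\sum_{i\in I'}\beta'_i\,\alpha_i^\vee$, which holds precisely because all the $\alpha_i$ with $i\in I'$, and $\beta$ itself, are long and hence have the same squared length; this is true but deserves a sentence.
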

In particular, if $\alpha$ is a short positive root and $x \in \Z^I$
is dominant then $x+\alpha$ is winning (equivalently, all short
positive roots are themselves winning).
\begin{proof}
  Let us play the usual numbers game on $x + \alpha$.  If we fire a
  vertex $i$ corresponding to a simple root $\beta$ whose length is at
  least that of $\alpha$, then since $\langle \alpha, \beta^\vee
  \rangle \geq -1$, the amplitude at vertex $i$ is $-1$. Since the
  length of $\alpha + \beta$ is equal to that of $\alpha$, we can
  replace $\alpha$ with $\alpha+\beta$, and then $x+(\alpha+\beta)$
  takes one fewer move under the numbers game to reach a dominant
  configuration. By induction on the number of moves required to play
  the numbers game on $x + \alpha$, we see that all vertices fired
  have amplitude $-1$, and hence $x + \alpha$ is winning.
\end{proof}
Suppose that $y \in X \cap D$ and $\alpha$ is a positive root of
minimum length such that $x+\alpha \preceq y$.  Let us play the
numbers game with a cutoff on $x+\alpha$.  We claim that this only
involves firing vertices corresponding to simple roots of length at
least $\alpha$.  Then, by Claim \ref{baswincl}, $x+\alpha$ is winning.
Moreover, the result $z$ of playing the numbers game with a cutoff is
the dominant configuration obtained from $x+\alpha$ by adding the
minimum positive combination of simple roots.  Since $y$ is dominant
and $x+\alpha \preceq y$, $y$ is also such a configuration, and it
follows that $z \preceq y$.

It remains to show that playing the numbers game beginning with
$x+\alpha$ does not involve firing a vertex corresponding to a simple
root of length shorter than $\alpha$. For a contradiction, suppose
not, and consider the first vertex fired corresponding to a shorter
simple root. Call this simple root $\gamma$.  It follows as above that
every dominant configuration is obtainable from $x+\alpha$ by adding
simple roots adds $\gamma$ as well.  Therefore, since $x+ \alpha
\preceq y$ and $y$ is dominant, also $x +\alpha+\gamma \preceq y$ and
hence $x+\gamma \preceq y$, which is a contradiction.

\subsection{Proof of Lemma \ref{m2lem2}} \label{ss:m2lem2-proof}
First, if $x$ is not dominant, then $\langle x, \alpha^\vee \rangle
\leq -1$, and hence $\langle x_i, \alpha^\vee \rangle \leq -1$ for
some $i$, which shows that $\alpha$ is $P_i$-progressive for $x_i$.  So
we can restrict to the dominant case.  Thus, $\alpha$ has minimal
length among simple roots such that $x+\alpha \preceq \mu$.

Given a simple root $\alpha$, let $P^\alpha$ denote the
maximum-dimensional boundary facet of $P$ meeting $\mu$ which is
parallel to the span of all simple roots other than $\alpha$.  In
other words (using Lemma \ref{one}), $x \in \Lambda(P^\alpha)$ if and
only if $x \in \Lambda(P)$ but $x + \alpha \not \preceq \mu$.
\begin{claim}\label{m2lem2cl}
If any element $x_i$ of the pair $(x_1, x_2)$ is dominant,
then either $x_i \in \Lambda(P^\alpha_i)$, or else $\alpha$ is
$P_i$-progressive for $x_i$.
\end{claim}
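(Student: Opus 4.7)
The plan is to argue by contradiction, exploiting the minimality of $\alpha$. Recall that the preceding paragraph of \S\ref{ss:m2lem2-proof} has reduced us to the case where $x = x_1 + x_2$ is dominant and $\alpha$ has minimum length among simple roots $\gamma$ with $x + \gamma \preceq \mu$. I would first dispose of the trivial cases: if $x_i + \alpha \not\preceq \mu_i$, then $x_i \in \Lambda(P_i^\alpha)$ by the characterization given just above the claim, and we are done. Otherwise $x_i + \alpha \preceq \mu_i$, so $x_i \prec \mu_i$ (since $\alpha$ is a positive simple root) and $P_i$-progressivity is meaningful; it remains to show that $\alpha$ is of minimum length among simple roots $\gamma$ with $x_i + \gamma \preceq \mu_i$. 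This is automatic when $\alpha$ is short, so I would reduce to the case where $\alpha$ is long.

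The key auxiliary observation I would establish is that every $\nu \in \Lambda(P_j)$ satisfies $\nu \preceq \mu_j$, \emph{even when $\nu$ is not itself dominant}. This follows from Lemma~\ref{one}: since $P_j \subseteq C^*_\sigma$, one has $\mu_j - \nu = \sum_k t_k \alpha_{k,\sigma}$ with $t_k \in \R_{\geq 0}$; and because $\mu_j - \nu \in Y$ and the simple roots $\alpha_{k,\sigma}$ form a $\Z$-basis of $Y$, the coefficients $t_k$ must in fact be nonnegative integers. Note that \eqref{e:ltwo-int} only asserts this fact for dominant $\nu$, but the argument extends without change.

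With this observation in hand, I would suppose for contradiction that some simple root $\gamma$ shorter than $\alpha$ satisfies $x_i + \gamma \preceq \mu_i$. Applying the observation to $x_{3-i} \in \Lambda(P_{3-i})$ gives $\mu_{3-i} - x_{3-i}$ as a nonnegative integer combination of simple roots, and summing yields
\[
\mu - x - \gamma = (\mu_i - x_i - \gamma) + (\mu_{3-i} - x_{3-i}),
\]
itself a nonnegative integer combination of simple roots. Hence $x + \gamma \preceq \mu$, contradicting the minimality of $\alpha$ as a $P$-progressive root for $x$.

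The main obstacle is really the auxiliary observation about non-dominant $\nu$; although it is a quick consequence of Lemma~\ref{one} together with integrality, it is the essential step, since it is what lets one combine the hypothesis on $x_i$ with the purely set-theoretic information that $x_{3-i} \in \Lambda(P_{3-i})$ to obtain an inequality for their sum $x$.
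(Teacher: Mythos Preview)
Your proof is correct and follows essentially the same route as the paper's. The paper compresses the argument into one line---``$x_i \notin \Lambda(P^\beta_i)$ implies that $x_1 + x_2 \notin \Lambda(P^\beta)$''---whereas you have unpacked precisely what makes that implication work, namely the auxiliary observation that $x_{3-i} \preceq \mu_{3-i}$ holds for any $x_{3-i} \in \Lambda(P_{3-i})$, dominant or not, by Lemma~\ref{one} and integrality.
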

\begin{proof}
  If $x_i \notin \Lambda(P^\alpha_i)$, then $\alpha$ must be of
  minimal length with this property, since $x_i \notin
  \Lambda(P^\beta_i)$ implies that $x_1 + x_2 \notin
  \Lambda(P^\beta)$, which implies by assumption that $\beta$ is at
  least as long as $\alpha$.
\end{proof}
Now, we prove the lemma. If, for any simple root $\beta$, $\langle
x_1, \beta^\vee \rangle \leq -1$ but $\langle x_2, \beta^\vee \rangle
\geq 1$, we can perform a move $(x_1, x_2) \mapsto (x_1 + \beta, x_2 -
\beta)$.  So, after performing such moves, we can assume that this
does not happen. Since $x = x_1 + x_2$ is dominant, this implies that
$x_1$ is dominant.  By Claim \ref{m2lem2cl}, we are done unless $x_1
\in P_1^\alpha$. So assume this is the case. By performing moves of
the form $(x_1, x_2) \mapsto (x_1 - \beta, x_2 + \beta)$ for simple
roots $\beta \neq \alpha$ (which may make $x_1$ no longer dominant,
but preserves the property that $x_1 \in P_1^\alpha$), we can assume
that $\langle x_2, \beta^\vee \rangle \geq 0$ for all simple roots
$\beta \neq \alpha$, without changing the assumption that $x_1 \in
P_1^\alpha$.  Then, either $\langle x_2, \alpha^\vee \rangle \leq -1$,
or $x_2$ is dominant. In the former case, $\alpha$ is
$P_2$-progressive for $x_2$, as desired. In the latter case, by Claim
\ref{m2lem2cl}, it is enough to suppose that $x_2 \in
P_2^\alpha$. However, in this case, $x=x_1 + x_2 \in P_1^\alpha +
P_2^\alpha = P^\alpha$, contradicting our assumption that $x + \alpha
\preceq \mu$.

\subsection{Proof of Lemma \ref{m2lem3}} \label{ss:m2lem3-proof}
First, if $\langle x_1 + \alpha + \beta, \alpha^\vee \rangle \geq 1$,
then $x_1 + \beta\in \Lambda(P_1)$ by Proposition \ref{mainclaim}.
Since $x_2 -\beta \in \Lambda(P_2)$ by assumption, this proves the
lemma.  Next, suppose that $\langle x_1 + \alpha + \beta, \alpha^\vee
\rangle \leq 0$, i.e., $\langle x_1 + \beta, \alpha^\vee \rangle \leq
-2$. Since $x = (x_1 + \beta) + (x_2 - \beta)$ satisfies
$\langle x, \alpha^\vee \rangle \geq -1$, it follows that $\langle
x_2 - \beta, \alpha^\vee \rangle \geq 1$.  By Proposition
\ref{mainclaim}, $x_2 - \beta - \alpha \in \Lambda(P_2)$. Since $x_1 +
\alpha + \beta \in \Lambda(P_1)$ by assumption, this proves that $(x_1
+ \alpha + \beta, x_2 - \beta - \alpha) \in \Lambda(P_1) \times
\Lambda(P_2)$. It remains to prove the final assertion. Suppose that
$(x_1 + \alpha, x_2 - \alpha) \notin \Lambda(P_1) \times
\Lambda(P_2)$. By assumption $x_1 + \alpha \in \Lambda(P_1)$, so $x_2
- \alpha \notin \Lambda(P_2)$.  In view of Proposition
\ref{mainclaim}, $\langle x_2, \alpha^{\vee}\rangle \leq 0$.  Since
$\langle x_2 - \beta, \alpha^{\vee} \rangle \geq 1$ (as observed
above), this implies that $\langle \beta, \alpha^\vee \rangle \leq
-1$. In this case, $\alpha + \beta$ must be a root.

\subsection{Proof of Lemma
  \ref{l:main2s-indlem}}\label{ss:l:main2s-indlem-proof}
First, consider the case that $\langle y, \beta^\vee \rangle <
0$. Then, we can let $(y_1, \ldots, y_{m-1}) \in \Lambda(P_1) \times
\cdots \times \Lambda(P_{m-1})$ be arbitrary such that $y = y_1 +
\cdots + y_{m-1}$, and then for some $j$ one must have $\langle y_j,
\beta^\vee \rangle < 0$ as well, so that $y_j + \beta \in
\Lambda(P_j)$ by Proposition \ref{mainclaim}. Similarly, if $\langle
y, \beta^\vee \rangle \geq 0$, then $\langle y + \beta, \beta^\vee
\rangle > 0$, and we can take any $(z_1, \ldots, z_{m-1}) \in
\Lambda(P_1) \times \cdots \times \Lambda(P_{m-1})$ such that $y +
\beta = z_1 + \cdots + z_{m-1}$. Then, there exists some $j$ such that
$\langle z_j, \beta^\vee \rangle > 0$, so again by Proposition
\ref{mainclaim}, $z_j - \beta \in \Lambda(P_j)$. Hence, the tuple
$(z_1, \ldots, z_{j-1}, z_j - \beta, z_{j+1}, \ldots, z_m)$ satisfies
the needed conditions.

\section{Ample polytopes are not diagonally split, after
  Payne}\label{s:not-diag-split}

As mentioned in the introduction, Payne (in \cite{payne}) considers a
toric variety, $U$, similar to the one we consider, $V$, but for which
the rays of the fan are $\R_{\geq 0} \cdot \alpha$, for all $\alpha
\in \Delta$.  He proves that, in types $A, B, C$, and $D$, for all
lattice polytopes $P$ corresponding to a torus-equivariant line bundle
on $U$ (even if not ample), the corresponding semigroup
$S_P$ is normal, and the ring $\C[S_P]$ is
Koszul.  This follows from the fact, that he proves, that such lattice
polytopes are \emph{diagonally split} for some integer $q \geq 2$.

Here, we show that ample polytopes for the varieties $V$ considered in
this paper are diagonally split for some integer $q \geq 2$ only in
the cases $A_1, A_2, A_3$, and $B_2 (= C_2)$.

Recall from \cite{payne} the following definition.  Let $\Gamma$ be a
lattice with dual lattice $\Gamma^\vee$, and let $\Gamma_\R := \Gamma
\otimes_\Z \R$ and $\Gamma^\vee_\R := \Gamma^\vee \otimes_\Z \R$. Let
$P \subseteq \Gamma \otimes_\Z \R$ be a lattice polytope (with
vertices in $\Gamma$). Let $v_1, \ldots, v_k \in \Gamma^\vee$ be the
primitive lattice generators of the inward normal rays of the facets
of $P$. Define

\begin{equation}
  \mathbb{F}_P^{\circ} := \{u \in \Gamma_\R  \mid -1 < \langle u, 
  v_i \rangle  < 1, \forall i =1, \ldots, k\}.
\end{equation}
Let $q \geq 2$ be an integer.  Then, $P$ is \emph{diagonally split}
for $q$ if and only if every element $z \in (\frac{1}{q}
\Gamma)/\Gamma$ has a representative $\tilde z \in
\mathbb{F}_P^{\circ} \cap \frac{1}{q} \Gamma$.

Note that, in our case, $\Gamma = Y$. It is clear that all lattice
polytopes corresponding to equivariant line bundles on a toric variety
are diagonally split if and only if the polytopes corresponding to
ample bundles are diagonally split. Moreover, such polytopes are
diagonally split if and only if any one such polytope is diagonally
split.

\begin{prop}
  An ample polytope \emph{(}in $Y_\R$, with vertices in
  $Y$\emph{)} is diagonally split for some $q \geq 2$ if and only if
  the root system is of type $A_1, A_2, A_3$, or $B_2 (= C_2)$.  For
  $A_1$ and $A_2$, ample polytopes are diagonally split for all
  $q \geq 2$, and for $A_3$ and $B_2$, ample polytopes are
  diagonally split for odd but not even $q \geq 2$.
\end{prop}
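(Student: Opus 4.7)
The plan is to make the set $\mathbb{F}_P^\circ$ explicit using the Weyl-invariance of the fan, and then to verify the diagonal splitness criterion case by case.

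First, I would observe that the primitive inward normals to the facets of $P$ are exactly the elements of $W \cdot \{\omega_1^\vee, \ldots, \omega_n^\vee\}$, since the fan of $V$ is the Weyl fan with rays generated by the fundamental coweights. Thus
\begin{equation*}
\mathbb{F}_P^\circ = \{u \in Y_\R \mid |\langle u, w\omega_j^\vee\rangle| < 1 \text{ for all } w \in W,\ j \in I\}.
\end{equation*}
Since $\langle \alpha_i, \omega_j^\vee\rangle = \delta_{ij}$, expressing $u = \sum a_i \alpha_i$ in simple-root coordinates gives $\langle u, \omega_j^\vee\rangle = a_j$, so $u \in \mathbb{F}_P^\circ$ if and only if for every $w \in W$, the simple-root coefficients of $wu$ all lie in $(-1, 1)$. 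This reformulation reduces the problem to a combinatorial condition on Weyl orbits of lattice vectors modulo $Y$.

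For the positive part of the classification, I would check $A_1$ and $A_2$ directly: in simple-root coordinates $\mathbb{F}_P^\circ$ equals the open interval $(-1, 1)$ and the hexagon $\{|a_1|, |a_2|, |a_1 - a_2| < 1\}$, respectively. Both contain the half-open fundamental parallelepiped $[0, 1)^n$ for $Y$, so every class in $\frac{1}{q}Y/Y$ is represented there, for every $q \geq 2$. For $A_3$ and $B_2$, $[0, 1)^n$ is no longer contained in $\mathbb{F}_P^\circ$; I would instead derive an explicit description of $\mathbb{F}_P^\circ$ in standard coordinates (for $A_3$: $\{x \in \R^4 \mid \sum x_i = 0,\ |x_i| < 1,\ |x_i + x_j| < 1\}$). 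For odd $q$, every residue class admits a representative in $\mathbb{F}_P^\circ$ by centered reduction modulo $q$: each coordinate is first reduced to the range $(-q/2, q/2)$, then at most one lattice correction of size $q$ on the largest-coordinate index restores the sum-zero constraint; a short argument using $\sum z_i \in \{-q, 0, q\}$ (a consequence of the bound $|z_i| \leq (q-1)/2$ for odd $q$) shows that all pairwise sums stay in $(-q, q)$. For even $q$, the class of $\frac{1}{2}(\epsilon_1 + \epsilon_2 - \epsilon_3 - \epsilon_4) \in \frac{1}{2}Y \subseteq \frac{1}{q}Y$ for $A_3$ (and an analogous class for $B_2$) has no representative in $\mathbb{F}_P^\circ$: a short case analysis of the admissible integer shifts forces $|x_i + x_j| \geq 1$ for some pair in every representative.

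The main obstacle is the negative direction for every root system not on our list, and for every $q \geq 2$. For each such type, I must exhibit a $y \in Y$ whose image in $Y/qY$ admits no representative $y'$ satisfying $|\langle y', w\omega_j^\vee\rangle| < q$ for all $w, j$. My strategy is to handle the minimal bad cases $A_4$, $B_3$, $C_3$, $D_4$, $F_4$, and $G_2$ one at a time, exhibiting in each a specific $y \in Y$ (independent of $q$) together with two Weyl-translates $w_1\omega_j^\vee, w_2\omega_j^\vee$ against which every representative of $\frac{1}{q}y$ pairs with absolute value $\geq 1$ against at least one of them. For example, for $A_4$, a suitable $y$ (in the same spirit as the $A_3$ obstruction) produces partial sums on disjoint index subsets whose constraints cannot be simultaneously relaxed by integer shifts. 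Every remaining root system contains one of these as a sub-root system, and the obstruction lifts along the sub-system inclusion. The hardest subcase I expect is $G_2$, where the rank is only $2$ (so fewer simultaneous constraints are available) and the unequal root and coweight lengths force a more delicate $q$-independent divisibility argument.
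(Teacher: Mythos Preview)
Your overall architecture---identify $\mathbb{F}_P^\circ$ via Weyl-translates of fundamental coweights, verify the small positive cases directly, and reduce the negative direction to a finite list of minimal sub-root systems---matches the paper's. The positive direction for $A_1, A_2, A_3, B_2$ is fine in outline, though the paper works in simple-root coordinates rather than standard coordinates.

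The genuine gap is in your negative direction. You propose, for each minimal bad type, a fixed $y \in Y$ \emph{independent of $q$}, and then argue that no representative of $\frac{1}{q}y$ in $\frac{1}{q}Y/Y$ lies in $\mathbb{F}_P^\circ$. This cannot work: for any fixed $y$ and all sufficiently large $q$, the element $\frac{1}{q}y$ itself is a representative of its own class, and since $\langle \tfrac{1}{q}y, w\omega_j^\vee\rangle = \tfrac{1}{q}\langle y, w\omega_j^\vee\rangle \to 0$, it eventually lies in $\mathbb{F}_P^\circ$. The obstruction class must therefore depend on $q$. In the paper the obstructing classes are of the form $\frac{p}{q}\cdot(\text{fixed lattice vector})$ with $p = \lfloor q/2 \rfloor$ or $p = (q\pm 1)/2$, so that the class stays near the ``half-lattice'' boundary where the pairing values hover around $\pm 1$; e.g.\ $z = \frac{p}{q}(\alpha_1+\alpha_3+\alpha_4)$ for $D_4$, and $z = \frac{p+1}{q}\alpha_1 + \frac{p+1}{q}\alpha_3 + \frac{1}{q}\alpha_4$ for $A_4$.

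A secondary issue: your plan to use only two fixed functionals $w_1\omega_j^\vee, w_2\omega_j^\vee$ is also too restrictive. In the paper's $A_4$ argument, different representatives $\tilde z$ are ruled out by different Weyl elements (several of length up to four), with a case analysis over the possible integer shifts $\delta_i \in \{0,1\}$. Finally, $F_4$ is redundant in your list of minimal bad types, since it contains $B_3$ (and $C_3$); the paper's minimal list is $A_4, D_4, B_3, C_3, G_2$ for odd $q$, together with the $A_3$ obstruction for even $q$. Contrary to your expectation, $G_2$ is actually the easiest case: $z = \frac{p}{q}\alpha_2$ and a single reflection $s_1$ suffice.
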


\begin{proof}
  The inward primitive normal vectors for an ample polytope
  are the images under the Weyl group of the fundamental coweights
  $\omega_i, i \in I$.  Hence, the polytope is diagonally split for
  $q$ if and only if, for all $z \in \frac{1}{q} Y / Y$, there is a
  representative $\tilde z \in \frac{1}{q} Y$ such that $-1 < \langle
  w \tilde z, \omega_i \rangle < 1$ for all $i \in I$ and all $w \in
  W$.

  We first prove that such polytopes are not diagonally split if the
  root system is not listed above. Such root systems contain, as a
  subsystem, either a root system of type $A_4$, $D_4$, $B_3$, $C_3$,
  or $G_2$.  It is clear that, for this direction, it suffices to show
  that, for every $q \geq 2$, ample polytopes for these
  four root systems are not diagonally split.  To do so, it suffices
  to exhibit in each of these cases a particular element $z \in
  \frac{1}{q} Y / Y$ such that, for all representatives $\tilde z \in
  \frac{1}{q} Y$, there exists $w \in W$ and $i \in I$ such that $|
  \langle w \tilde z, \omega_i \rangle | \geq 1$.

We use the standard labeling of roots as in \cite[\S VI.4]{bourbaki} (which we
will also recall). Also, for
every $i \in I$, we denote by $s_i$ the simple reflection
corresponding to the simple root $\alpha_i$.

First let us consider a root system of type $A_3$ and even $q$, and
show that $P$ is not diagonally split. Recall that, for $A_n$ type,
the simple roots $\alpha_1, \ldots, \alpha_n$ are linearly ordered
along a line segment. We consider the element $z := \frac{1}{2}
\alpha_1 + \frac{1}{2} \alpha_3$.  Then, for every $\tilde z \in z +
Y$, either $|\langle \tilde z, \omega_i \rangle| \geq 1$ for some $i$,
or $\tilde z$ is in the same Weyl orbit as $z$.  But, $\langle s_2 z,
\omega_2 \rangle = 1$, which yields the desired inequality.  In
particular, ample polytopes for any root system containing $A_3$ are
not diagonally split for even $q$. Also, the same is true for root
systems containing $B_3$ or $C_3$.  Thus, for the cases $A_4, D_4,
B_3$, and $C_3$, it suffices to restrict our attention to the case
where $q$ is odd.

From now on, fix an odd integer $q \geq 3$ and set $p :=
\frac{q-1}{2}$.  Suppose that the root system is of type $A_4$. Then,
we consider the element
\begin{equation}
  z := \frac{p+1}{q} \alpha_1 + \frac{p+1}{q} \alpha_3 + 
  \frac{1}{q} \alpha_4.
\end{equation}
The only elements $\tilde z \in z + \frac{1}{q} Y$ that we need to
consider are the eight elements
\begin{equation}
  \tilde z = z - \delta_1 \alpha_1 -\delta_3 \alpha_3 - \delta_4 \alpha_4, 
  \quad \delta_i \in \{0,1\}.
\end{equation}
First, consider the case that $(\delta_1, \delta_3) \neq (1,1)$. If
also $(\delta_1, \delta_3) \neq (0,0)$, then $|\langle s_2 s_1
\widetilde{z}, \omega_2 \rangle| = 1$.  If $(\delta_1, \delta_3) =
(0,0)$, then $\langle s_2 \widetilde{z}, \omega_2 \rangle =
\frac{q+1}{q} > 1$.

Next, consider the case that $\delta_1=\delta_3=1$ and
$\delta_4=0$. Then, $s_3 \tilde z = -\frac{p}{q} \alpha_1 +
\frac{p+1}{q} \alpha_3 + \frac{1}{q} \alpha_4$, which is a case we
already considered in the preceding paragraph.

Thus, it remains to consider the case
$\delta_1=\delta_3=\delta_4=1$. Then, $s_4 \tilde z =
-\frac{p}{q} \alpha_1 -\frac{p}{q} \alpha_3 + \frac{p}{q} \alpha_4$.
Hence, $s_3 s_4 \tilde z = -\frac{p}{q} \alpha_1 + \frac{q-1}{q}
\alpha_3 + \frac{p}{q} \alpha_4$.  Finally, $s_2 s_1 s_3 s_4 \tilde z
= \frac{p}{q} \alpha_1 + \frac{p+q-1}{q} \alpha_2 + \frac{q-1}{q}
\alpha_3 + \frac{p}{q} \alpha_4$, and hence $\langle s_2 s_1 s_3 s_4
\tilde z , \omega_2 \rangle \geq 1$.  

Hence, ample polytopes for root systems containing $A_4$ are not
diagonally split for odd $q \geq 3$, and together with the even case
above, they are not diagonally split for any $q \geq 2$.

Next, consider the root system $D_4$.  As in \cite[\S VI.4]{bourbaki},
$\alpha_2$ is the simple root corresponding to the node, and
$\alpha_1, \alpha_3$, and $\alpha_4$ are the other simple
roots. Define the element
\begin{equation}
z = \frac{p}{q} (\alpha_1 + \alpha_3 + \alpha_4).
\end{equation}
Similarly to the $A_4$ case, we only need to consider the elements
\begin{equation}
  \tilde z = z - \delta_1 \alpha_1 - \delta_3 \alpha_3 - \delta_4 \alpha_4, 
  \quad \delta_i \in \{0,1\}.
\end{equation}
If $\delta_1=\delta_3=\delta_4$ then we see that $|\langle s_2 \tilde
z, \omega_2 \rangle| \geq 1$.  For the other cases, using symmetry, we
may assume that $\delta_1 = \delta_3 = 0$ and $\delta_4 = 1$.  Then,
$|\langle s_2 s_4 \tilde z, \omega_2 \rangle | > 1$. Hence, ample
polytopes containing $D_4$ are not diagonally split.

Consider now the root system $B_3$, with simple roots $\alpha_1,
\alpha_2, \alpha_3$, so that $\alpha_2$ corresponds to the central
vertex and $\alpha_3$ is the short simple root. Let $z = \frac{p}{q}
(\alpha_1 + \alpha_3)$. Then, $|\langle s_3 s_2 z, \omega_3 \rangle|
\geq 1$, and the same is true if we replace $z$ by $z - (\alpha_1 +
\alpha_3)$, $s_1 (z - \alpha_1)$, or $s_3 (z - \alpha_3)$.  This
proves the desired inequality, so that ample polytopes containing
$B_3$ are not diagonally split.

Similarly, consider the root system $C_3$, with simple roots
$\alpha_1, \alpha_2, \alpha_3$ such that $\alpha_2$ corresponds to the
central vertex and $\alpha_3$ is the long simple root. Let $z :=
\frac{p}{q} (\alpha_1 + \alpha_3)$. Then, $|\langle s_2 z, \omega_2
\rangle | \geq 1$, and the same is true if we replace $z$ by $z -
(\alpha_1 + \alpha_3)$, $s_1 (z - \alpha_1)$, or $s_3 (z - \alpha_3)$.

Finally, consider the root system $G_2$, and now allow $q \geq 2$ to
be any integer. Let $p = \lfloor \frac{q}{2} \rfloor$.  Let $\alpha_1$
be the short simple root and $\alpha_2$ be the long simple root.
Consider $z := \frac{p}{q} \alpha_2$.  Then, $|\langle s_1 z, \omega_1
\rangle | \geq 1$. The same is true if we replace $z$ by $s_2(z -
\alpha_2)$.  This proves that ample polytopes are not diagonally split
for $G_2$.

This completes the proof that ample polytopes for root systems other
than $A_1, A_2, A_3$, and $B_2$ are not diagonally split for any $q
\geq 2$.  We claim also that ample polytopes are not diagonally split
in the case where $q$ is even and the root system is of type $B_2$.
For this, let $\alpha_1$ be the long simple root and $\alpha_2$ be the
short simple root.  Consider the element $z = \frac{1}{2} \alpha_1$.
Then, the same argument as in the case $G_2$ applies.

It remains to prove the claims that ample polytopes are diagonally
split for odd $q$ in the cases $A_1, A_2, A_3$, and $B_2$, and in the
case of $A_1$ and $A_2$, also for even $q$. For the case $A_1$, this
is clear, and in the case $A_2$, it follows by choosing, for any $z
\in \frac{1}{q} Y / Y$, the representative $\tilde z \in \frac{1}{q}
Y$ such that $\langle \tilde z, \omega_i \rangle \in [0,1)$ for both
fundamental coweights $\omega_i$.  Next, consider the case $B_2$, and
let $q \geq 3$ be odd.  Let $\alpha_1$ be the long root and $\alpha_2$
be the short root.  Then, for any $z \in \frac{1}{q} Y / Y$, choose
the representative $\tilde z \in \frac{1}{q} Y$ such that $|\langle
\tilde z, \omega_1 \rangle| < \frac{1}{2}$, $|\langle \tilde z,
\omega_2 \rangle| < 1$, and $\langle \tilde z, \omega_1 \rangle$ and
$\langle \tilde z, \omega_2 \rangle$ are either both nonnegative or
both nonpositive.  It is easy to verify that $\tilde z \in
\mathbb{F}_P^{\circ} \cap \frac{1}{q} \Gamma$, as required.

Finally, consider the case $A_3$ with $q$ odd. Let
$\alpha_1, \alpha_2$, and $\alpha_3$ be the simple roots, with
$\alpha_2$ corresponding to the central vertex.  Then, for any $z \in
\frac{1}{q} Y / Y$, first suppose that $\langle \tilde z, \omega_2
\rangle$ is integral for all representatives $\tilde z$ of $z$. In
this case, choose the representative $\tilde z$ so that $\langle
\tilde z, \omega_2 \rangle = 0$ and $|\langle \tilde z, \omega_i
\rangle| < \frac{1}{2}$ for $i \in \{1,3\}$.  Otherwise, if $\langle
\tilde z, \omega_2 \rangle$ is not integral for any representative
$\tilde z$ of $z$, choose $\tilde z$ such that $|\langle \tilde z,
\omega_i \rangle | < 1$ for all $i$, either all $\langle \tilde z,
\omega_i \rangle$ are nonnegative or all are nonpositive, and such
that $|\langle \tilde z, \omega_1 + \omega_3 \rangle | \leq 1$ (where
$\alpha_2$ corresponds to the central vertex).  A straightforward
computation verifies that this yields a diagonal splitting.
\end{proof}

\appendix
\section{Sharpening Theorem \ref{main2s} to preserve winning
conditions}\label{s:m2win}
Here we explain that, if one restricts to tuples $(x_1, \ldots, x_m)
\in \Lambda(P_1) \times \cdots \Lambda(P_m)$ such that the $x_i$ are
winning, then restricting the equivalence relation $\sim$ and the root
moves to these tuples, Theorem \ref{main2s} continues to hold:
\begin{thm}\label{main2s-win}
  Suppose that $x \in \Lambda(P_1 + \cdots + P_m)$
  is winning. Then
\begin{enumerate}
\item[(i)] There exists a tuple $(x_1, \ldots, x_m) \in \Lambda(P_1)
  \times \cdots \times \Lambda(P_m)$ of winning elements such that $x
  = x_1 + \cdots + x_m$;
\item[(ii)] If $(x_1, \ldots, x_m), (x_1', \ldots, x_m') \in
  \Lambda(P_1) \times \cdots \times \Lambda(P_m)$ are two tuples of
  winning elements such that $x_1 + \cdots + x_m = x = x_1' + \cdots +
  x_m'$, then the tuples are related by a sequence of root moves that
  only pass through tuples of winning elements.
\end{enumerate}
\end{thm}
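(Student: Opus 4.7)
The plan is to mirror the inductive structure of the proof of Theorem \ref{main2s}, restricted throughout to winning tuples. The key fact enabling this is already contained in Proposition \ref{p:powin}: if $\alpha$ is $P$-progressive for a winning $x$, then $x + \alpha$ is also winning, so the induction ``assume the theorem for $x + \alpha$, deduce it for $x$'' stays entirely within the winning locus of $\Lambda(P)$. Moreover, for winning $x$ and $P$-progressive $\alpha$, one has $\langle x + \alpha, \alpha^\vee \rangle \geq 1$, since $P$-progressiveness forces either $\langle x, \alpha^\vee \rangle \geq 0$ (the dominant case) or $\langle x, \alpha^\vee \rangle = -1$ (the only non-dominant value compatible with winning). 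This is what makes the reverse step -- subtracting $\alpha$ from one of the summands -- non-vacuous.

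For part (i), I would induct as above. The base case $x = \mu = \mu_1 + \cdots + \mu_m$ is immediate, since $(\mu_1, \ldots, \mu_m)$ is the unique decomposition and each $\mu_i$ is dominant, hence winning. In the inductive step I would take a winning decomposition $(y_1, \ldots, y_m)$ of $x + \alpha$ provided by the hypothesis; by $\langle x + \alpha, \alpha^\vee \rangle \geq 1$ some $y_j$ satisfies $\langle y_j, \alpha^\vee \rangle \geq 1$, hence $y_j - \alpha \in \Lambda(P_j)$ by Proposition \ref{mainclaim}. The new work is to arrange, possibly after first applying a sequence of winning-preserving root moves to the tuple, that such a $j$ exists with $y_j - \alpha$ itself winning. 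For part (ii), I would first treat $m = 2$ by refining \S \ref{ss:main2smeq2}: strengthen Lemma \ref{m2lem2} so that the moves constructed there pass only through winning pairs, and likewise strengthen Lemma \ref{m2lem3}. The general $m$ then follows by the same induction on $m$ as in \S \ref{ss:main2smgen}, using a winning version of Lemma \ref{l:main2s-indlem}, which itself reduces to part (i) applied to appropriate partial sums $y$ or $y + \beta$.

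The main obstacle is the stability of winning under individual root moves. A move $(x_i, x_j) \mapsto (x_i + \beta, x_j - \beta)$ with $\beta$ a simple root is a numbers-game firing on the $x_i$ side when $\langle x_i, \beta^\vee \rangle = -1$, so winning of $x_i$ is automatically preserved; however, winning of $x_j$ can fail, for instance when there is a positive root $\gamma$ with $\langle \beta, \gamma^\vee \rangle \geq 2$ (which occurs in non-simply-laced types, e.g.\ $C_2$ with $\beta$ long and $\gamma$ short) and $\langle x_j, \gamma^\vee \rangle = -1$, yielding $\langle x_j - \beta, \gamma^\vee \rangle \leq -3$. Overcoming this will require a length-based analysis in the spirit of Claim \ref{baswincl}, showing either that one can reorder or interleave moves so that those involving longer roots are carried out only when the relevant long-coroot amplitudes are large enough to absorb them, or that a problematic single move can be split into a longer sequence of shorter-root moves which jointly achieve the same net transport while preserving $\langle \cdot, \gamma^\vee\rangle \geq -1$ on every positive coroot at each intermediate step. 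The same length considerations already drive the winning half of Proposition \ref{p:powin}, so I expect the necessary combinatorics to be of the same character.
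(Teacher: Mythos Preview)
Your overall inductive scheme is right, but you are missing the key lemma that dissolves your ``main obstacle'': if $x$ is winning and $\alpha$ is a \emph{simple} root with $\langle x, \alpha^\vee \rangle \geq 1$, then $x - \alpha$ is automatically winning (Lemma~\ref{stillwinlem} in the paper). The proof is short: if $\langle x, \alpha^\vee \rangle = 1$ then firing at $\alpha$ takes $x-\alpha$ back to $x$; if $\langle x, \alpha^\vee \rangle \geq 2$, first fire all vertices not adjacent to $\alpha$, and then check directly that $x - \alpha$ is dominant at $\alpha$ and at every neighbor of $\alpha$ (using $\langle \alpha, \alpha_j^\vee \rangle \leq -1$ for adjacent $j$). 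Your worry about $\langle \beta, \gamma^\vee \rangle \geq 2$ for a non-simple positive $\gamma$ is therefore a red herring: the simple moves used throughout the argument subtract only simple roots from elements with positive amplitude there, and this lemma covers them. In particular, no rearrangement is needed in part~(i): any $j$ with $\langle y_j, \alpha^\vee \rangle \geq 1$ already has $y_j - \alpha$ winning.

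There is, however, a genuine gap in your plan for general $m$. You propose to induct on $m$ via a winning version of Lemma~\ref{l:main2s-indlem}, applying part~(i) to the partial sum $y = x_1 + \cdots + x_{m-1}$ (or $y+\beta$). But a sum of winning elements need not be winning, even when the full sum $x = y + x_m$ is: already in type $A_1$ one can have $x_1, x_2$ of amplitude $-1$ and $x_3$ of amplitude $3$. So you cannot feed $y$ into part~(i), and the reduction to $(Q, P_m)$ breaks. The paper avoids this by \emph{not} inducting on $m$ at all in the winning version: instead, after using simple moves to make $\alpha$ be $P_i$-progressive for some $x_i$ (iterating through the tuple rather than through $y$), one inducts only on $x$ along $P$-progressive steps, and handles each root move at level $x+\alpha$ by applying the winning refinement of Lemma~\ref{m2lem3} (Lemma~\ref{m2lem3-win}) directly to the pair of indices involved in that move. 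The observation that $\langle x, \alpha^\vee \rangle \geq -1$ forces $\langle x_i + x_j, \alpha^\vee \rangle \geq -1$ for the two active indices (since all other $\langle x_\ell, \alpha^\vee\rangle \leq 0$ can be arranged) is what replaces the global $m=2$ reduction.
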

This sharpens the theorem, and further explains its proof.
\begin{rem}
  Note that, in contrast to Theorem \ref{main2s} itself,  even when
  $P_1 = \cdots = P_m$, it is not necessarily true that all root
  moves through tuples of winning elements are generated by root moves
  involving only adjacent indices $j = i+1$ in \eqref{eq:equivmove}.
  This is because adjacent elements in a tuple $(x_1, \ldots, x_m)$ of
  winning elements with $x_1 + \cdots + x_m$ winning need not themselves sum
  to a winning element. So, one cannot deduce that there is a sequence
  of root moves between adjacent elements which swaps the two elements while
  only passing through pairs of winning elements.
\end{rem}

The theorem rests on the following observation:
\begin{lemma} \label{stillwinlem} Suppose that $x \in \Lambda(P)$ is
  winning. If $\langle x, \alpha_i^\vee \rangle \geq 1$, then $x -
  \alpha_i \in \Lambda(P)$ is also winning.
\end{lemma}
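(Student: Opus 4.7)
My plan is to verify the two conclusions in turn. For the first, the inclusion $x - \alpha_i \in \Lambda(P)$ follows immediately from Proposition \ref{mainclaim} applied with $y = x$ and $\beta = \alpha_i$, since $\langle x, \alpha_i^\vee \rangle \geq 1$ by hypothesis. The bulk of the work lies in showing that $x - \alpha_i$ is winning.

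For this, I would invoke the criterion of Proposition \ref{gsprop} in the form \eqref{gscond2}, which reduces the problem to verifying
\begin{equation*}
\langle x - \alpha_i, \alpha^\vee \rangle = \langle x, \alpha^\vee \rangle - \langle \alpha_i, \alpha^\vee \rangle \geq -1
\end{equation*}
for every $\alpha \in \Delta_+$. When $\alpha = \alpha_i$, the inequality is immediate since $\langle \alpha_i, \alpha_i^\vee \rangle = 2$ and $\langle x, \alpha_i^\vee \rangle \geq 1$. When $\langle \alpha_i, \alpha^\vee \rangle \leq 0$, it follows from the winning bound $\langle x, \alpha^\vee \rangle \geq -1$ alone, since we are subtracting a nonpositive quantity.

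The essential case is $\alpha \neq \alpha_i$ with $\langle \alpha_i, \alpha^\vee \rangle \geq 1$. Here I would exploit the fact that $s_i$ permutes the positive roots other than $\alpha_i$, so $s_i(\alpha) \in \Delta_+$. Weyl-equivariance of the coroot map $\gamma \mapsto \gamma^\vee$ yields the identity $s_i(\alpha)^\vee = \alpha^\vee - \langle \alpha_i, \alpha^\vee \rangle \alpha_i^\vee$, so applying the winning hypothesis on $x$ to the positive root $s_i(\alpha)$ gives
\begin{equation*}
\langle x, \alpha^\vee \rangle - \langle \alpha_i, \alpha^\vee \rangle \langle x, \alpha_i^\vee \rangle = \langle x, s_i(\alpha)^\vee \rangle \geq -1.
\end{equation*}
Since $\langle x, \alpha_i^\vee \rangle \geq 1$, the product $\langle \alpha_i, \alpha^\vee \rangle \langle x, \alpha_i^\vee \rangle$ is at least $\langle \alpha_i, \alpha^\vee \rangle$, whence $\langle x, \alpha^\vee \rangle \geq \langle \alpha_i, \alpha^\vee \rangle - 1$, which is exactly the inequality required.

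The conceptual point of the argument is that reflecting the troublesome positive root $\alpha$ across the simple hyperplane $\alpha_i^\perp$ produces another positive root for which the winning bound is already available. This sidesteps any length-based case analysis on $\alpha$ and handles all Dynkin types uniformly, so I do not anticipate any further obstacles.
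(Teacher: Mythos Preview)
Your proof is correct, and it takes a genuinely different route from the paper's argument. The paper splits into two cases according to whether $\langle x, \alpha_i^\vee \rangle = 1$ or $\langle x, \alpha_i^\vee \rangle \geq 2$. In the first case it observes that $x-\alpha_i \mapsto x$ is a single move of the numbers game with a cutoff, so $x-\alpha_i$ is winning because $x$ is. In the second case it first fires vertices $j$ that are neither equal nor adjacent to $i$ (which does not disturb $\langle x, \alpha_i^\vee \rangle$ and commutes with subtracting $\alpha_i$) until those amplitudes are nonnegative, and then checks directly that $x-\alpha_i$ is dominant: for $j$ adjacent to $i$ one has $\langle x-\alpha_i, \alpha_j^\vee \rangle = \langle x, \alpha_j^\vee \rangle - \langle \alpha_i, \alpha_j^\vee \rangle \geq -1 + 1 = 0$, and $\langle x-\alpha_i, \alpha_i^\vee \rangle \geq 0$.

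By contrast, you work directly with the characterization \eqref{gscond2} and handle the delicate case $\langle \alpha_i, \alpha^\vee \rangle \geq 1$ with the reflection trick: since $s_i$ permutes $\Delta_+ \setminus \{\alpha_i\}$ and $(s_i\alpha)^\vee = s_i(\alpha^\vee)$, the winning bound for $x$ applied to $s_i(\alpha)$ yields exactly the inequality needed. This avoids both the case split on $\langle x, \alpha_i^\vee \rangle$ and the preliminary reduction via the numbers game, and is arguably the cleaner argument. The paper's approach, on the other hand, stays closer to the game-theoretic language used throughout the appendix and gives the slightly sharper information that (after the reduction) $x-\alpha_i$ is actually dominant when $\langle x, \alpha_i^\vee \rangle \geq 2$.
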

\begin{proof}
  Suppose that $x \in \Lambda(P)$ is winning and $\langle x,
  \alpha_i^\vee \rangle \geq 1$.  First note that $x - \alpha_i \in
  \Lambda(P)$ by Proposition \ref{mainclaim}.  Next, if $\langle x,
  \alpha_i^\vee \rangle = 1$, then $x - \alpha_i \mapsto x$ under the
  numbers game, so $x-\alpha_i$ is also winning.  Suppose now that
  $\langle x, \alpha_i^\vee \rangle \geq 2$.  By firing vertices other
  than $i$ which are not adjacent to $i$, we may assume that $\langle
  x, \alpha_j^\vee \rangle \geq 0$ whenever $j$ is not adjacent to
  $i$. Since, for $j$ adjacent to $i$, $\langle x, \alpha_j^\vee
  \rangle \geq -1$, and also $\langle \alpha_i, \alpha_j^\vee \rangle
  \leq -1$, it follows that $\langle x - \alpha_i, \alpha_j^\vee
  \rangle \geq 0$ for all $j$ adjacent to $i$.  Moreover, since
  $\langle \alpha_i, \alpha_i^\vee \rangle = 2$, it also follows that
  $\langle x - \alpha_i, \alpha_i^\vee \rangle \geq 0$.  Hence, $x -
  \alpha_i$ is dominant, and therefore winning. (Without assuming that
  $\langle x, \alpha_j^\vee \rangle \geq 0$ whenever $j$ is not
  adjacent to $i$, we then see that $x - \alpha_i$ will be winning,
  but not dominant in general.)
\end{proof}

\subsection{Simple moves}
\begin{defn}
  Let $(x_1, \ldots, x_m) \in X^m$. Suppose that $\alpha$ is a simple
  root and $j, k$ are indices such that $\langle x_j, \alpha^\vee
  \rangle \leq -1$ and $\langle x_k, \alpha^\vee \rangle \geq
  1$. Then, setting $x_j' = x_j +\alpha$ and $x_k = x_k - \alpha$, and
  $x_\ell' = x_\ell$ for $\ell \notin \{j,k\}$, we say $(x_1', \ldots,
  x_m')$ is \emph{obtained from $(x_1, \ldots x_m)$ by a simple move}.
\end{defn}
Note that a simple move is a very special type of root move, in the
case that all the elements of the tuples are in the relevant
polytopes. In fact, it is enough to check this for one of the tuples:
\begin{lemma} \label{l:simple-win1} If $P_1, \ldots, P_m$ are special
  ample polytopes, $(x_1, \ldots, x_m) \in \Lambda(P_1) \times \cdots
  \times \Lambda(P_m)$, and $(x_1', \ldots, x_m')$ is obtained from
  $(x_1, \ldots, x_m)$ by simple moves, then also $(x_1', \ldots,
  x_m') \in \Lambda(P_1) \times \cdots \times \Lambda(P_m)$.
\end{lemma}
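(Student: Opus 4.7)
The plan is to reduce to the case of a single simple move by induction on the length of the sequence of simple moves, and then to observe that a single simple move preserves membership in $\Lambda(P_1) \times \cdots \times \Lambda(P_m)$ by applying Proposition \ref{mainclaim} to the two coordinates that change.

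More precisely, suppose $(x_1', \ldots, x_m')$ is obtained from $(x_1, \ldots, x_m)$ by a single simple move associated to a simple root $\alpha$ and indices $j, k$ with $\langle x_j, \alpha^\vee \rangle \leq -1$ and $\langle x_k, \alpha^\vee \rangle \geq 1$. I would need to show two things: $x_j + \alpha \in \Lambda(P_j)$ and $x_k - \alpha \in \Lambda(P_k)$. For the second, Proposition \ref{mainclaim} applied with $y = x_k \in \Lambda(P_k)$ and root $\beta = \alpha \in \Delta$ gives $x_k - \alpha \in \Lambda(P_k)$ directly, since $\langle x_k, \alpha^\vee \rangle \geq 1$. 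For the first, I rewrite $\langle x_j, \alpha^\vee \rangle \leq -1$ as $\langle x_j, (-\alpha)^\vee \rangle \geq 1$; since $-\alpha \in \Delta$ as well, Proposition \ref{mainclaim} applied with $y = x_j$ and $\beta = -\alpha$ gives $x_j - (-\alpha) = x_j + \alpha \in \Lambda(P_j)$.

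Having verified the one-step case, the general statement follows by straightforward induction on the number of simple moves in the sequence: if $(x_1, \ldots, x_m) \in \Lambda(P_1) \times \cdots \times \Lambda(P_m)$ and $(y_1, \ldots, y_m)$ is obtained by a shorter sequence of simple moves that by the inductive hypothesis lies in $\Lambda(P_1) \times \cdots \times \Lambda(P_m)$, then any further simple move applied to $(y_1, \ldots, y_m)$ again yields a tuple in the product by the one-step case. I do not expect any real obstacle here, since Proposition \ref{mainclaim} is stated precisely for an arbitrary $\beta \in \Delta$, so both signs of $\alpha$ are covered symmetrically; the only content of the lemma beyond the definition of a simple move is this direct invocation of Proposition \ref{mainclaim} on each of the two affected coordinates.
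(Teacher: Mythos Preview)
Your proof is correct and matches the paper's own argument: the paper simply states that the lemma is an immediate consequence of Proposition~\ref{mainclaim}, which is exactly what you spell out by applying it with $\beta = \alpha$ and $\beta = -\alpha$ to the two affected coordinates. The induction on the number of simple moves is the obvious way to pass from one move to a sequence, and there is nothing more to it.
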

The lemma is an immediate consequence of Proposition \ref{mainclaim}.
Furthermore, using Lemma \ref{stillwinlem}, we can prove
\begin{lemma} \label{l:simple-win2} If $(x_1, \ldots, x_m) \in
  \Lambda(P_1) \times \cdots \times \Lambda(P_m)$ is a tuple of
  winning elements, then any simple move results in another tuple of
  winning elements.
\end{lemma}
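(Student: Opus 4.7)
The plan is to handle the two changed coordinates $x_j + \alpha$ and $x_k - \alpha$ separately, invoking Lemma \ref{stillwinlem} for one and the strong convergence of the numbers game with a cutoff for the other. Note that Lemma \ref{l:simple-win1} already guarantees $x_j + \alpha \in \Lambda(P_j)$ and $x_k - \alpha \in \Lambda(P_k)$, so the only substantive task is to verify that each of these two elements is winning.

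For $x_k - \alpha$, the hypotheses of the simple move give $\langle x_k, \alpha^\vee \rangle \geq 1$, and $x_k$ is winning by assumption. Lemma \ref{stillwinlem} then applies verbatim (with $\alpha_i = \alpha$) to conclude that $x_k - \alpha$ is winning.

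For $x_j + \alpha$, I would first pin down the amplitude of $x_j$ at $\alpha$. Since $x_j$ is winning, condition \eqref{gscond2} applied to the simple (hence positive) root $\alpha$ gives $\langle x_j, \alpha^\vee \rangle \geq -1$, while the simple-move hypothesis gives $\langle x_j, \alpha^\vee \rangle \leq -1$. Thus $\langle x_j, \alpha^\vee \rangle = -1$. In terms of the configuration $\iota(x_j)$, the amplitude at the vertex indexed by $\alpha$ equals $-1$, so firing that vertex is a legitimate move in the numbers game with a cutoff; as noted in \S\ref{s:numbers-game}, this firing produces precisely $\iota(x_j + \alpha)$. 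By Proposition \ref{gsprop}, any valid move from a winning configuration yields another winning configuration, so $x_j + \alpha$ is winning.

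The argument is essentially a bookkeeping exercise combining previously established results, so I do not anticipate a real obstacle. The only point that deserves emphasis is the observation that the simple-move hypothesis $\langle x_j, \alpha^\vee\rangle \leq -1$, together with the winning condition $\langle x_j, \alpha^\vee\rangle \geq -1$, forces equality and hence makes $x_j \mapsto x_j + \alpha$ a single legal step of the numbers game with a cutoff; once this is noted, Proposition \ref{gsprop} and Lemma \ref{stillwinlem} finish the two sides of the move immediately.
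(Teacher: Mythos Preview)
Your proof is correct and follows essentially the same approach as the paper: both arguments use Lemma~\ref{stillwinlem} for the $x_k-\alpha$ side, and for the $x_j+\alpha$ side observe that the winning condition forces $\langle x_j,\alpha^\vee\rangle=-1$, so $x_j\mapsto x_j+\alpha$ is a single legal move of the numbers game with a cutoff and hence preserves the winning property. The paper first reduces to $m=2$, but since only two coordinates change this is merely cosmetic.
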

\begin{proof}
  It is clearly enough to assume $m = 2$.  Without loss of generality,
  the move is $(x_1, x_2) \mapsto (x_1+\alpha, x_2 - \alpha)$ where
  $\langle x_1, \alpha^\vee \rangle < 0$ and $\langle x_2, \alpha^\vee
  \rangle > 0$.  Since $x_1$ is winning, $\langle x_1, \alpha^\vee
  \rangle = -1$ and $x_1+\alpha$ is obtained by playing a move of the
  numbers game. Hence $x_1+\alpha$ is winning.  Also, $x_2-\alpha$ is
  winning by Lemma \ref{stillwinlem}.
\end{proof}
Also, the proof of Lemma \ref{m2lem2} actually implies
\begin{lemma}
  Let $P_1$ and $P_2$ be special ample polytopes, $(x_1, x_2) \in
  \Lambda(P_1) \times \Lambda(P_2)$, $P = P_1 + P_2$, and $x = x_1 +
  x_2 \in \Lambda(P)$.  If $\alpha$ is $P$-progressive for $x$, then
  there is a sequence of simple moves taking $(x_1, x_2)$ to a pair
  $(x_1', x_2')$ such that, for some $i\in\{1,2\}$, $\alpha$ is
  $P_i$-progressive for $x_i'$.
\end{lemma}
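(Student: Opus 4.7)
The plan is to revisit the proof of Lemma \ref{m2lem2} and verify that every move it uses is already a simple move in the sense of \S \ref{s:m2win}. Nothing new needs to be proved: the work is in identifying the correct inequalities.

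First I would dispose of the non-dominant case. If $x=x_1+x_2$ is not dominant, then $\langle x,\alpha^\vee\rangle\le -1$ by the definition of $P$-progressive, so $\langle x_i,\alpha^\vee\rangle\le -1$ for some $i\in\{1,2\}$, and $\alpha$ is $P_i$-progressive for that $x_i$ with no moves performed. Hence we may reduce to the case where $x$ is dominant.

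Next, the proof of Lemma \ref{m2lem2} uses moves of two kinds. The first kind is $(x_1,x_2)\mapsto(x_1+\beta,x_2-\beta)$ performed when $\langle x_1,\beta^\vee\rangle\le -1$ and $\langle x_2,\beta^\vee\rangle\ge 1$; this is literally a simple move. The second kind is $(x_1,x_2)\mapsto(x_1-\beta,x_2+\beta)$ for a simple root $\beta\ne\alpha$, performed when $\langle x_2,\beta^\vee\rangle\le -1$. The key observation is that $x=x_1+x_2$ remains dominant throughout the procedure (simple moves preserve the sum). Therefore at the moment each such move is executed we have $\langle x_1,\beta^\vee\rangle=\langle x,\beta^\vee\rangle-\langle x_2,\beta^\vee\rangle\ge 0-(-1)=1$, so this is again a simple move. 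Thus even though intermediate $x_1$'s may no longer be dominant, the ``simplicity'' criterion is maintained purely by the dominance of the sum $x$.

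Finally I would conclude exactly as in the proof of Lemma \ref{m2lem2}: after the two stages of simple moves, we have $\langle x_1,\beta^\vee\rangle\ge 0$ for all simple roots $\beta$ and $\langle x_2,\beta^\vee\rangle\ge 0$ for all simple roots $\beta\ne\alpha$. Either $\langle x_2,\alpha^\vee\rangle\le -1$, in which case $\alpha$ is $P_2$-progressive for $x_2$, or $x_2$ is dominant; in the latter case Claim \ref{m2lem2cl} then forces $\alpha$ to be $P_1$- or $P_2$-progressive unless both $x_1\in \Lambda(P_1^\alpha)$ and $x_2\in\Lambda(P_2^\alpha)$, which contradicts $x+\alpha\preceq\mu$ as before. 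The only subtlety here is the bookkeeping just mentioned about the second kind of move; once that is verified, the lemma is an immediate consequence of the already-written proof of Lemma \ref{m2lem2}.
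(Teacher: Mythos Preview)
Your proposal is correct and follows exactly the approach the paper takes: the paper simply asserts that ``the proof of Lemma \ref{m2lem2} actually implies'' the present lemma, and your argument spells out the one thing that needs checking---namely that the moves of the second kind in that proof are simple, because dominance of $x=x_1+x_2$ forces $\langle x_1,\beta^\vee\rangle\geq 1$ whenever $\langle x_2,\beta^\vee\rangle\leq -1$. One small slip in your summary: after both stages $x_1$ need not be dominant (the paper explicitly says ``which may make $x_1$ no longer dominant''); what survives is $x_1\in\Lambda(P_1^\alpha)$, established via Claim \ref{m2lem2cl} after stage one and preserved through stage two, and that is all the final contradiction requires.
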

Hence, if $(x_1, x_2)$ additionally has the property that $x_1$ and
$x_2$ are winning, Lemma \ref{l:simple-win2} implies that all pairs
obtained along the way from $(x_1, x_2)$ to $(x_1', x_2')$ (along with
$(x_1', x_2')$ itself) consist of winning elements.

\subsection{The case $m=2$ of Theorem
  \ref{main2s-win}.(ii)}\label{ss:meq2-win}
In order to explain the general result, it is best to begin with the
case $m=2$.

\begin{proof}[Proof of Theorem \ref{main2s-win}.(ii) for $m=2$]
  Without the winning conditions, this is the case $m=2$ of Theorem
  \ref{main2s}.(ii).  Let $x = x_1 + x_2$ and let $\alpha$ be
  $P$-progressive for $x$.  We assume the statement for tuples of
  winning elements which sum to $x+\alpha$.  In the proof of Theorem
  \ref{main2s}.(ii), the moves taken are either simple, which preserve
  the property of elements being winning by Lemma \ref{l:simple-win2},
  or else are moves obtained by Lemma \ref{m2lem3} from root moves for
  a tuple whose sum is $x+\alpha$. As in the proof of Theorem
  \ref{main2s}.(ii), the first such move of the latter type begins
  with a tuple $(x_1, x_2)$ of winning elements such that $\alpha$ is
  $P_1$-progressive for $x_1$. By Proposition \ref{p:powin}, then $x_1
  + \alpha$ is winning and in $\Lambda(P_1)$.  By induction on the
  number of such moves required, we can then assume that the latter
  root move is of the form $(x_1 + \alpha, x_2) \mapsto
  (x_1+\alpha+\beta, x_2-\beta)$ for $\beta \in \Delta$ such that
  $x_1+\alpha+\beta$ and $x_2-\beta$ are winning.  Thus, it remains to
  prove the following sharpening of Lemma \ref{m2lem3}.
\end{proof}
\begin{lemma}\label{m2lem3-win}
  Suppose that $(x_1, x_2) \in \Lambda(P_1) \times \Lambda(P_2)$,
  $x_1$ and $x_2$ are winning, and $\alpha$ is a simple root 
  such that $x_1 + \alpha$ is winning and in $\Lambda(P_1)$, and $x =
  x_1 + x_2$ satisfies $\langle x, \alpha^\vee \rangle \geq -1$.  If
  $\beta \in \Delta$ is such that $(x_1+\alpha + \beta, x_2-\beta) \in
  \Lambda(P_1) \times \Lambda(P_2)$ is a tuple of winning elements,
  then either $(x_1 + \beta, x_2 - \beta) \in \Lambda(P_1) \times
  \Lambda(P_2)$ or $(x_1 + \alpha + \beta, x_2 - \alpha - \beta) \in
  \Lambda(P_1) \times \Lambda(P_2)$, and it is a pair of winning
  elements. Furthermore, in the latter case, either $(x_1 + \alpha,
  x_2-\alpha)$ is in $\Lambda(P_1) \times \Lambda(P_2)$ and is a pair
  of winning elements, or else $\alpha+\beta \in \Delta$.
\end{lemma}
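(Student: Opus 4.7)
The plan is to follow the proof of Lemma \ref{m2lem3} nearly verbatim, inserting at each step an application of Lemma \ref{stillwinlem} to propagate the winning property across the relevant root moves. As in that lemma, the argument splits into two cases according to the sign of $\langle x_1+\alpha+\beta, \alpha^\vee \rangle$.

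In Case 1, where $\langle x_1+\alpha+\beta, \alpha^\vee \rangle \geq 1$, Proposition \ref{mainclaim} applied to $x_1+\alpha+\beta$ and $\alpha$ gives $x_1+\beta = (x_1+\alpha+\beta)-\alpha \in \Lambda(P_1)$; Lemma \ref{stillwinlem} (since $x_1+\alpha+\beta$ is winning by hypothesis) upgrades this to say $x_1+\beta$ is also winning. Combined with $x_2-\beta$ being winning by hypothesis, we obtain the first alternative as a pair of winning elements. In Case 2, where $\langle x_1+\alpha+\beta, \alpha^\vee \rangle \leq 0$, one has $\langle x_1+\beta, \alpha^\vee \rangle \leq -2$, so the hypothesis $\langle x, \alpha^\vee \rangle \geq -1$ forces $\langle x_2-\beta, \alpha^\vee \rangle \geq 1$. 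Proposition \ref{mainclaim} then gives $x_2-\beta-\alpha \in \Lambda(P_2)$, and Lemma \ref{stillwinlem} applied to the winning $x_2-\beta$ shows $x_2-\beta-\alpha$ is winning. Since $x_1+\alpha+\beta$ is winning and in $\Lambda(P_1)$ by hypothesis, the second alternative holds as a winning pair.

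For the final assertion in Case 2, suppose that $(x_1+\alpha, x_2-\alpha)$ fails to be a pair of winning elements in $\Lambda(P_1)\times\Lambda(P_2)$. Since $x_1+\alpha$ is winning and in $\Lambda(P_1)$ by hypothesis, the obstruction must lie with $x_2-\alpha$. We split further: if $x_2-\alpha \notin \Lambda(P_2)$, the contrapositive of Proposition \ref{mainclaim} forces $\langle x_2, \alpha^\vee \rangle \leq 0$; if instead $x_2-\alpha \in \Lambda(P_2)$ but is not winning, the contrapositive of Lemma \ref{stillwinlem} (applied to the winning $x_2$) again yields $\langle x_2, \alpha^\vee \rangle \leq 0$. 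In either sub-case, combining with the inequality $\langle x_2-\beta, \alpha^\vee \rangle \geq 1$ established in Case 2 gives $\langle \beta, \alpha^\vee \rangle \leq -1$, hence $\alpha+\beta \in \Delta$ (note that the degenerate case $\beta=-\alpha$ makes the second alternative $(x_1,x_2)$ itself, rendering the final assertion vacuous).

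The only genuinely new ingredient beyond the proof of Lemma \ref{m2lem3} is the second sub-case of the final assertion — namely the possibility that $x_2-\alpha$ lies in $\Lambda(P_2)$ but is not winning — which Lemma \ref{stillwinlem} was purpose-built to handle via its contrapositive. The main obstacle is thus a bookkeeping one: keeping careful track of \emph{which} winning hypotheses are consumed at \emph{which} step, and verifying that each use of Proposition \ref{mainclaim} is paired with a matching use of Lemma \ref{stillwinlem} so that winning information is never lost in transit.
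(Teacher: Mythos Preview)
Your proof is correct and follows the paper's own argument essentially verbatim: split on the sign of $\langle x_1+\alpha+\beta,\alpha^\vee\rangle$, invoke Proposition~\ref{mainclaim} exactly as in Lemma~\ref{m2lem3}, and pair each such invocation with Lemma~\ref{stillwinlem} to carry the winning property. Your two sub-cases in the final assertion (whether $x_2-\alpha$ fails to lie in $\Lambda(P_2)$ or merely fails to be winning) are in fact both handled by the single contrapositive of Lemma~\ref{stillwinlem}, since that lemma already asserts $x-\alpha_i\in\Lambda(P)$ as part of its conclusion; the paper accordingly collapses them into one sentence, but your separation does no harm.
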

\begin{proof}
  To prove the first assertion, we only need to check that, following
  the proof of Lemma \ref{m2lem3}, the final tuple $(x_1 + \beta, x_2
  - \beta)$ or $(x_1 + \alpha + \beta, x_2 - \alpha - \beta)$ is a
  tuple of winning elements.  In the first case, we had $\langle x_1 +
  \alpha + \beta, \alpha^\vee \rangle \geq 1$, and $x_1 + \alpha +
  \beta$ is winning, so Lemma \ref{stillwinlem} implies that $x_1 +
  \beta$ is winning; $x_2 - \beta$ is winning by assumption.  In the
  second case, we had $\langle x_2 - \beta, \alpha^\vee \rangle \geq
  1$ and $x_2 - \beta$ is winning, and hence $x_2-\beta-\alpha$ is
  winning by Lemma \ref{stillwinlem}; $x_1 + \alpha + \beta$ is
  winning by assumption.  For the final statement, we note that, if
  $(x_1 + \alpha, x_2 - \alpha)$ is not winning, then in view of Lemma
  \ref{stillwinlem}, $\langle x_2, \alpha^\vee \rangle \leq 0$, and
  the statement then follows as in the proof of Lemma \ref{m2lem3}.
\end{proof}
\subsection{Proof for general $m$}
As in the proof of Theorem \ref{main2s}, let $P = P_1 + \cdots + P_m$
and $\alpha$ be a simple root that is $P$-progressive for $x$.

(i) This follows from the proof of Theorem \ref{main2s}.(i), if we
notice that, if $(y_1, \ldots, y_m)$ is a tuple of winning elements
with $y_1 + \cdots y_m = x + \alpha$ and $\langle y_i, \alpha^\vee
\rangle \geq 1$, then $y_i -\alpha$ is still winning by Lemma
\ref{stillwinlem}, and hence $(y_1, \ldots, y_{i-1}, y_i - \alpha,
y_{i+1}, \ldots, y_m)$ is a tuple of winning elements summing to $x$.

(ii) We adapt the proof of Theorem \ref{main2s}.(ii).  Let $Q := P_1 +
\cdots + P_{m-1}$, $y = x_1 + \cdots + x_{m-1}$, and $y' = x_1' +
\cdots + x_{m-1}'$. We assume the statement of the theorem for tuples
whose sum is $x$.  The proof below will be slightly more complicated
than the proof of Theorem \ref{main2s}.(ii), because we cannot in
general assume that $y$ is winning, and hence cannot apply the
statement of the theorem to $y$ itself (and in particular, we do not
need to assume the statement of the theorem for smaller values of
$m$).

By performing simple moves, we can assume that $\alpha$ is either
$Q$-progressive for $y$ or $P_m$-progressive for $x_m$.  In the case
it is $Q$-progressive for $y$, we can iterate this procedure on the
tuple $(x_1, \ldots, x_{m-1})$ until there exists an index $i$ such
that $\alpha$ is $P_i$-progressive for $x_i$. So we assume this.  By
doing the same for $(x_1', \ldots, x_m')$, we can suppose that
$\alpha$ is $P_j$-progressive for $x_j'$. By hypothesis, $(x_1,
\ldots, x_{i-1}, x_i + \alpha, x_{i+1}, \ldots, x_m)$ and $(x_1',
\ldots, x_{j-1}', x_j' + \alpha, x_{j+1}', \ldots, x_m')$ are related
by root moves which pass only through winning tuples (since these are
tuples whose sum is $x+\alpha$).

It is then enough to show that, for a single root move $(x_1, \ldots,
x_{i-1}, x_i + \alpha, x_{i+1}, \ldots, x_m) \mapsto (y_1, \ldots,
y_{m})$ (with $x_\ell, y_\ell \in \Lambda(P_\ell)$ winning for all
$\ell$, and $x_i+\alpha \in \Lambda(P_i)$ winning), then there exists
an index $k$ such that $y_k - \alpha$ is in $\Lambda(P_k)$ and
winning, and such that $(x_1, \ldots, x_m)$ is related to $(y_1,
\ldots, y_{k-1}, y_k - \alpha, y_{k+1}, \ldots, y_m)$ by root moves
that pass only through tuples of winning elements. If there exists an
index $k$ such that $x_k=y_k$ and $\langle x_k, \alpha^\vee \rangle
\geq 1$, then the statement follows immediately.  If not, then the
root move is of the form $(x_i + \alpha, x_j) \mapsto (x_i + \alpha +
\beta, x_j - \beta)$ for some $\beta \in \Delta$, and $\langle x_\ell,
\alpha^\vee \rangle \leq 0$ for all $\ell \notin \{i,j\}$.  Since
$\langle x+\alpha, \alpha^\vee \rangle \geq 1$ (as $\alpha$ is
$P$-progressive for $x$), it follows that $\langle (x_i+\alpha)+x_j,
\alpha^\vee \rangle \geq 1$, and hence $\langle x_i + x_j, \alpha^\vee
\rangle \geq -1$.  Now, the statement follows from Lemma
\ref{m2lem3-win} (applied to the pair $(x_i, x_j)$ together with
$\alpha$ and $\beta$); $k$ will then be either $i$ or $j$.

\bibliographystyle{amsalpha}
\bibliography{references}
\end{document}